\newtheorem{thm}{\indent \sc Theorem}[section]
\newtheorem{prop}[thm]{\indent \sc Proposition}
\newtheorem{cor}[thm]{\indent \sc Corollary}
\newtheorem{defi}[thm]{\indent \sc Definition}
\newtheorem{lem}[thm]{\indent \sc Lemma}
\newtheorem{rem}[thm]{\indent \sc Remark}
\newcommand{\spec}{\operatorname{Spec}}
\newcommand{\et}{\mathrm{\acute{e}t}}
\newcommand{\Nis}{\mathrm{Nis}}
\newcommand{\Zar}{\mathrm{Zar}}
\newcommand{\HO}{\operatorname{H}}
\newcommand{\HB}{\operatorname{H}_{\mathrm{B}}}
\begin{document}

\author{Makoto Sakagaito\footnote{
\textit{Present affiliation}: 
Indian Institute of Science Education and Research, Bhopal\newline
\textit{E-mail address}: makoto@iiserb.ac.in, sakagaito43@gmail.com}
}
\title{On a generalized Brauer group 
in mixed characteristic cases}
\date{}
\maketitle
\begin{center}
Indian Institute of Science Education and Research, Mohali\\
\end{center}
\begin{abstract}
We define a generalization of the Brauer group
$\HB^{n}(X)$ for an equi-dimensional scheme $X$ 
and $n>0$.
In the case where $X$ is the spectrum of a local ring of a 
smooth algebra over a discrete valuation ring,
$\HB^{n}(X)$ 
agrees with the \'{e}tale motivic cohomology 
$\HO^{n+1}_{\mathrm{\acute{e}t}}\left(X, \mathbb{Z}(n-1)\right)$. 
We prove
(a part of) the Gersten-type conjecture for
the generalized Brauer group for
a local ring of a 
smooth algebra over a mixed characteristic 
discrete valuation ring
and
an isomorphism
\begin{math}
\HB^{n}\left(
R
\right)
\simeq
\HB^{n}\left(
k
\right)
\end{math}
for a henselian local ring $R$ of a smooth algebra over a
mixed characteristic discrete valuation ring and the
residue field $k$ of $R$.
As an application, we show 
local-global principles
for Galois cohomology groups 
over function fields of smooth curves 
over 
a mixed characteristic
excellent henselian discrete valuation ring.

\end{abstract}
%
%
\section{Introdution}

Let $A$ be a Dedekind ring or field,
$X$ a smooth scheme over $\spec(A)$,
$\mathbb{Z}(n)_{\et}$ the Bloch's cycle 
complex for \'{e}tale topology
and 
\begin{math}
\mathbb{Z}/m(n)_{\et}
=\mathbb{Z}(n)_{\et}\otimes\mathbb{Z}/m\mathbb{Z}
\end{math}
for a positive integer $m$.
Let
$D^{b}(X_{\et}, \mathbb{Z}/m\mathbb{Z})$
be the derived category of
bounded complexes of \'{e}tale
$\mathbb{Z}/m\mathbb{Z}$-sheaves on $X$.

Then
\begin{description}
\item{(i)}
If $l\in \mathbb{N}$ is invertible in $A$,
there is a quasi-isomorphism
\begin{equation*}
\mathbb{Z}/l(n)_{\et}
\xrightarrow{\sim}
\mu^{\otimes n}_{l}[0]
\end{equation*}
in
$D^{b}(X_{\et}, \mathbb{Z}/l\mathbb{Z})$
by (\cite[p.774, Theorem 1.2.4]{Ge}, \cite{V2}).
Here $\mu_{l}$ is the sheaf of
$l$-th roots of unity.

\item{(ii)}
If $A$ is a field of characteristic $p>0$, there is a quasi-isomorphism
\begin{equation*}
\mathbb{Z}/p^{r}(n)_{\et}
\xrightarrow{\sim}
\operatorname{W}_{r}\Omega^{n}_{X, \log}[-n]
\end{equation*}
in
$D^{b}(X_{\et}, \mathbb{Z}/p^{r}\mathbb{Z})$
for any positive integer $r$
by \cite[p.787, \S 5, (12)]{Ge}.
Here $\operatorname{W}_{r}\Omega^{n}_{X, \log}$ is 
the logarithmic de Rham-Witt sheaf.
\end{description}

\vspace{2.0mm}

Assume that a positive integer $m$ equals $l$ in (i) or
$p^{r}$ in (ii). 
Let $R$ be a local ring of a smooth algebra over $A$, 
$k(R)$ its fraction field and 
$\kappa(\mathfrak{p})$ the residue field of 
$\mathfrak{p}\in\spec R$. 
Let $n$ be a positive integer. Suppose that
$n\geq N$
in the case where
$A$ is a field of characteristic $p>0$
and
$[k(R): k(R)^{p}]=N$.

Then the sequence of \'{e}tale hyper-cohomology groups
\begin{align}\label{KC}
0
\to
\HO^{n+1}_{\et}
\left(
R,
\mathbb{Z}/m(n)
\right)  
\to&
\HO^{n+1}_{\et}
\left(
k(R),
\mathbb{Z}/m(n)
\right)  \\
\to& 
\displaystyle
\bigoplus_{
\substack
{
\mathfrak{p}\in \spec R  \nonumber \\
\operatorname{ht}(\mathfrak{p})
=
1
}
}
\HO^{n}_{\et}
\left(
\kappa(\mathfrak{p}),
\mathbb{Z}/m(n-1)
\right)  
\end{align}
is exact by  
(\cite[p.774, Theorem 1.2.(2, 4, 5)]{Ge}, \cite{V2}) and 
\cite[p.608, Theorem 5.2]{Sh}.

\vspace{2.0mm}

Since 
\begin{math}
\HO_{\Zar}^{i}
\left(
R, \mathbb{Z}(n)
\right)
=
0
\end{math}
for $i>n$
by \cite[p.779, Theorem 3.2 b)]{Ge} and
\cite[p.786, Corollary 4.4]{Ge},
we have
\begin{equation*}
\HO_{\et}^{n+1}
\left(
R, \mathbb{Z}(n)
\right)
=
\HO_{\et}^{i}
\left(
R, \mathbb{Q}(n)
\right)
=
0
\end{equation*}
for $i>n$ by 
\cite[p.774, Theorem 1.2.2]{Ge}
and 
\cite[p.781, Proposition 3.6]{Ge}.
Hence the \'{e}tale motivic cohomology
\begin{math}
\HO_{\et}^{n+2}
\left(
R, \mathbb{Z}(n)
\right)
\end{math}
is a torsion group and
\begin{equation*}
\HO^{n+1}_{\et}
\left(
R,
\mathbb{Z}/m(n)
\right)
=
\operatorname{Ker}
\left(
\HO^{n+2}_{\et}
\left(
R,
\mathbb{Z}(n)
\right)
\xrightarrow{\times m}
\HO^{n+2}_{\et}
\left(
R,
\mathbb{Z}(n)
\right)
\right)
\end{equation*}
for any positive integer $m$.
Therefore we can regard the sequence (\ref{KC}) as 
(a part of) the Gersten type resolution for the \'{e}tale motivic cohomology.

\vspace{2.0mm}

One of the objectives of this paper is to  
prove that 
an improved version of the sequence (\ref{KC}) 
is exact
in the case where
$A$
is a discrete valuation ring of mixed-characteristic
$(0, p)$ and $m=p^{r}$
as follows:

\begin{thm}\upshape(Proposition \ref{CaseGBEt}
and Theorem \ref{GBmixex})
\label{MTGC}
Let $A$ be a discrete valuation ring of 
mixed-characteristic
$(0, p)$,
$R$ a local ring of a smooth algebra over $A$,
$k(R)$ the fraction field of $R$ and 
$\kappa(\mathfrak{p})$ the residue field of
$\mathfrak{p}\in \spec R$.

Then the sequence
\begin{align}\label{GBeq}
0
\to
\HO^{n+1}_{\et}
\left(
R,
\mathbb{Z}/p^{r}(n)
\right)  
\to&
\HO^{n+1}_{\et}
\left(
k(R),
\mathbb{Z}/p^{r}(n)
\right)^{\prime}  \\
\to& 
\displaystyle
\bigoplus_{
\substack
{
\mathfrak{p}\in \spec R  \nonumber \\
\operatorname{ht}(\mathfrak{p})
=
1
}
}
\HO^{n}_{\et}
\left(
\kappa(\mathfrak{p}),
\mathbb{Z}/p^{r}(n-1)
\right)  
\end{align}
is exact
for
integers $n\geq 0$
and
$r> 0$
where     
\begin{align*}
&\HO^{n+1}_{\et}
\left(
k(R),
\mathbb{Z}/p^{r}(n)
\right)^{\prime}     \\
=
&\operatorname{Ker}
\left(
\HO^{n+1}_{\et}
\left(
k(R),
\mathbb{Z}/p^{r}(n)
\right)
\to
\displaystyle
\prod_{
\substack{
\mathfrak{p}\in \spec R  \\
\operatorname{ht}(\mathfrak{p})=1
}
}
\HO^{n+1}_{\et}
\left(
k(
R_{\bar{\mathfrak{p}}}
),
\mathbb{Z}/p^{r}(n)
\right)
\right)
\end{align*}
and
\begin{math}
R_{\bar{\mathfrak{p}}}
\end{math}
is the strictly henselization of
$R_{\mathfrak{p}}$.
\end{thm}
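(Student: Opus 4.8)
The plan is to separate the height-one primes of $R$ into the \emph{horizontal} ones, for which $p\notin\mathfrak p$, and the \emph{vertical} ones, for which $p\in\mathfrak p$, and to treat the two families by different inputs before gluing them along the special fibre $V(p)=\spec(R/pR)$. Over the open complement $U=\spec(R[1/p])$, which is smooth over the characteristic-zero field $k(A)$, the identification $\mathbb Z/p^{r}(n)_{\et}\simeq\mu_{p^{r}}^{\otimes n}[0]$ of (i) holds and the sequence (\ref{KC}) is already exact; this disposes of the generic point and of all horizontal residues. The genuinely new content sits at the vertical primes and in the comparison with the full local ring $R$, where one must instead use the de Rham--Witt description $\mathbb Z/p^{r}(n)_{\et}\simeq\operatorname{W}\Omega^{n}_{r,\log}[-n]$ of (ii) on $V(p)$ together with Shiho's Gersten resolution \cite[p.608, Theorem 5.2]{Sh} for logarithmic de Rham--Witt sheaves. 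Accordingly I would organize the argument as a left-exactness/comparison statement and a separate middle-exactness statement, matching the two cited results.

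First I would check that the left-hand map is well defined with target the primed subgroup. For each height-one prime $\mathfrak p$ the strict henselization $R_{\bar{\mathfrak p}}$ is a strictly henselian discrete valuation ring whose residue field $\bar\kappa$ is separably closed; since $\HO^{j}_{\et}(\bar\kappa,\mathbb Z/p^{r}(n))=0$ for $j>n$ and rigidity identifies $\HO^{n+1}_{\et}(R_{\bar{\mathfrak p}},\mathbb Z/p^{r}(n))$ with $\HO^{n+1}_{\et}(\bar\kappa,\mathbb Z/p^{r}(n))=0$, every class coming from $R$ dies in $\HO^{n+1}_{\et}(k(R_{\bar{\mathfrak p}}),\mathbb Z/p^{r}(n))$. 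Hence the image of $\HO^{n+1}_{\et}(R,\mathbb Z/p^{r}(n))$ lands in $\HO^{n+1}_{\et}(k(R),\mathbb Z/p^{r}(n))^{\prime}$, which is precisely what the prime is designed to record. Injectivity of this map, and the vanishing of its composite with the residue map, then follow from the coniveau (Gersten) spectral sequence for $\mathbb Z/p^{r}(n)_{\et}$ on $\spec R$ together with the vanishing $\HO^{n+1}_{\et}(R,\mathbb Z(n))=0$ recalled above.

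The core of the argument is exactness in the middle, and the decisive point is the behaviour at a vertical prime, where $R_{\mathfrak p}$ is a discrete valuation ring of mixed characteristic. Here the naive residue $\HO^{n+1}_{\et}(k(R),\mu_{p^{r}}^{\otimes n})\to\HO^{n}_{\et}(\kappa(\mathfrak p),\mathbb Z/p^{r}(n-1))$, which is Kato's $p$-adic residue into $\operatorname{W}\Omega^{n-1}_{r,\log}$-cohomology, only detects the top graded piece of the Bloch--Kato filtration and is blind to wild ramification. A class may therefore have vanishing residue yet remain nonzero in $\HO^{n+1}_{\et}(k(R_{\bar{\mathfrak p}}),\mu_{p^{r}}^{\otimes n})$. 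Intersecting with the primed subgroup removes exactly these wildly ramified classes, and I would show, via the localization (Gysin) triangle for the pair $\bigl(\spec R, V(p)\bigr)$ and the de Rham--Witt Gersten resolution on $V(p)$, that the resulting unramified group coincides with the image of $\HO^{n+1}_{\et}(R,\mathbb Z/p^{r}(n))$.

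The main obstacle is exactly this vertical gluing: it requires a $p$-adic purity (semipurity) statement for $\mathbb Z/p^{r}(n)_{\et}$ along the codimension-one closed subscheme $V(p)$ in mixed characteristic, identifying the boundary of the localization triangle with Kato's residue, and it requires controlling the discrepancy between that residue and the strict-henselization restriction. This is where the subtlety of wild ramification concentrates; the strict-henselization correction built into $\HO^{n+1}_{\et}(k(R),\mathbb Z/p^{r}(n))^{\prime}$ is the device that tames it, and it is what allows the lower bound on $n$ present in the equal-characteristic statement (\ref{KC}) to be dropped to $n\geq 0$. I would reduce the purity input to Geisser's results over Dedekind rings \cite{Ge} and to Shiho's resolution \cite{Sh}, so that the remaining work is the comparison of the two residue maps at the vertical prime.
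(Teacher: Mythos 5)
Your high-level decomposition --- horizontal versus vertical height-one primes, the characteristic-zero Gersten resolution on the generic fibre, purity along the closed fibre, and the observation that the primed subgroup genuinely adds information only at the single vertical prime $(\pi)$ --- does match the skeleton of the paper's \emph{injectivity} argument (Theorem \ref{mixinj}: localization at $Y=V(\pi)$, the vanishing $\HO^{n+2}_{Y}(R_{\et},\mathbb{Z}(n))=\HO^{n}_{\et}(Y,\mathbb{Z}(n-1))=0$ from Proposition \ref{tru2}, and Proposition \ref{equiGer} on the generic fibre). But there is a genuine gap at the step you yourself flag as ``the main obstacle'': exactness in the middle. The hard point is not the comparison of Kato's residue with the strict-henselization restriction at $(\pi)$ --- that is the one-dimensional case, settled in Proposition \ref{supp}. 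The hard point is that a codimension-one analysis only produces, from an everywhere-unramified class $\alpha\in\HO^{n+1}_{\et}(k(R),\mathbb{Z}/p^{r}(n))^{\prime}$, a section of the Zariski sheaf $\mathbf{R}^{n+1}\epsilon_{*}\mathbb{Z}/p^{r}(n)$ away from a closed subset of codimension at least two; nothing in your plan extends that section to the closed point of $\spec R$, which is what producing a class in $\HO^{n+1}_{\et}(R,\mathbb{Z}/p^{r}(n))=\Gamma\bigl(\spec R,\mathbf{R}^{n+1}\epsilon_{*}\mathbb{Z}/p^{r}(n)\bigr)$ requires. The localization triangle for $(\spec R,V(\pi))$ together with Shiho's resolution on the special fibre controls only the codimension-one layer of the coniveau filtration.

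The paper's mechanism for that extension is exactly what your proposal omits: the sheaf-theoretic group $\HB^{n}$ as intermediary, the intersection formula $\HB^{n}(\mathfrak{X})=\bigcap_{x\in\mathfrak{X}^{(1)}}\HB^{n}(\mathcal{O}_{\mathfrak{X},x})$ proved by induction on dimension (Theorem \ref{GBmixex}, fed by Corollary \ref{intmax}), and, as the engine of that induction, the codimension-$\geq 2$ semipurity $\HB^{n}(X)=\HB^{n}(U)$ for $U$ the complement of a regular closed subscheme of codimension two (Proposition \ref{HBco2P}); the $p$-primary part of that semipurity is a genuinely new input resting on Sato's purity for $p$-adic \'{e}tale Tate twists, not on Geisser's or Shiho's results, to which you propose to reduce everything. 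Two smaller defects: your assertion that injectivity ``follows from the coniveau spectral sequence together with $\HO^{n+1}_{\et}(R,\mathbb{Z}(n))=0$'' is question-begging, since exactness of the coniveau complex at the first spot is precisely what must be proved --- the actual argument needs Proposition \ref{tru2} plus the full equi-characteristic Gersten conjecture (Panin) on the generic fibre, as in Theorem \ref{mixinj}. And the closed subscheme to cut out is $V(\pi)$ for $\pi$ a uniformizer of $A$ rather than $V(p)$: when $A$ is ramified, $R/pR$ is non-reduced, whereas purity in the form used here is applied to the regular scheme $\spec R/\pi R$.
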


This paper is organized as follows.
In \S\ref{EMC} we prove Proposition \ref{tru2}
which is an improved version of 
the purity theorem of
the Bloch's cycle complex 
for \'{e}tale topology
(\cite[p.774, Theorem 1.2.1]{Ge})
by improving the proof of
\cite[p.774, Theorem 1.2.1]{Ge}
and prove that the sequence (\ref{GBeq}) is exact 
in the case where
$R$ is a discrete valuation ring.

In \S\ref{defgb} we define a generalization of the Brauer group. 
We show the reason why we can regard 
it as a generalization of the Brauer group 
(Proposition \ref{reason}) and
a relation between it and \'{e}tale motivic cohomology
(Proposition \ref{CaseGBEt}).

In \S\ref{Puri}
we prove 
(a part of) the Gersten-type conjecture 
for the generalized Brauer group
for a local ring of a smooth algebra 
over a mixed characteristic discrete
valuation ring
(Theorem \ref{GBmixex}).
In \S\ref{Rigid}
we prove the rigidity theorem 
for 
\'{e}tale motivic cohomology
for
a henselian local ring
of a smooth algebra over
a mixed-characteristic discrete valuation ring .

Namely we prove the following theorem:
\begin{thm}\upshape(
Theorem \ref{henmix}) \label{Ihenmix}
Let $R$ be a henselian local ring of a smooth algebra over
a 
mixed-characteristic
discrete valuation ring $A$ and
$k$ the residue field of $R$. 

Then we have an isomorphism
\begin{equation*}
\HO^{n+1}_{\et}
\left(
R,
\mathbb{Z}/m(n)
\right)
\xrightarrow{\sim}
\HO^{n+1}_{\et}
\left(
k,
\mathbb{Z}/m(n)
\right)
\end{equation*}
for any positive integer $m$.
\end{thm}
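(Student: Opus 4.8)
The plan is to reduce to prime power coefficients and then to separate the part prime to $p$ from the $p$-primary part. Using the canonical splitting $\mathbb{Z}/m \simeq \bigoplus_{\ell} \mathbb{Z}/\ell^{v_{\ell}(m)}$, both sides of the asserted isomorphism decompose as direct sums indexed by the primes dividing $m$, so it suffices to treat $m = \ell^{r}$. If $\ell \neq p$, then $\ell$ is a unit in $R$ because the residue field $k$ has characteristic $p$, so part (i) of the introduction gives quasi-isomorphisms $\mathbb{Z}/\ell^{r}(n)_{\et} \xrightarrow{\sim} \mu_{\ell^{r}}^{\otimes n}[0]$ on both $\spec R$ and $\spec k$. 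The claim then becomes the classical rigidity (invariance) of \'{e}tale cohomology with coefficients invertible in $R$ under restriction to the closed point of a henselian local ring, which handles the prime-to-$p$ part and reduces the theorem to $m = p^{r}$.

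For $m = p^{r}$ the two sides are of genuinely different nature: since $k$ has characteristic $p$, part (ii) gives $\mathbb{Z}/p^{r}(n)_{\et} \xrightarrow{\sim} \operatorname{W}\Omega^{n}_{r,\log}[-n]$ on $\spec k$, whereas on $\spec R$ one must work with the mixed-characteristic complex. I would first invoke continuity, writing $R$ as a filtered colimit of \'{e}tale neighborhoods of its closed point inside a smooth $A$-algebra, so that the cohomology is a colimit, and then reduce the relative dimension by a d\'{e}vissage. Choosing the $t_{i}$ among relative coordinates of the smooth structure, each quotient $R/(t_{1}, \dots, t_{i})$ remains (essentially) smooth over $A$ with the same residue field $k$, so one descends along a chain $R \to R/(t_{1}) \to \cdots \to V$ ending at a henselian discrete valuation ring $V$ essentially smooth over $A$ with residue field $k$. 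The successive codimension-one steps are comparisons of a ring with a smooth relative divisor, and the final mixed-characteristic step $V \to k$ is the one-dimensional case for which the exactness of \eqref{GBeq} is established in \S\ref{EMC}.

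The heart of the matter is therefore the henselian discrete valuation ring $V$ with fraction field $K$ and residue field $k$, and the comparison between the mixed-characteristic complex on $V$ and $\operatorname{W}\Omega^{n}_{r,\log}[-n]$ on $k$. By Proposition \ref{CaseGBEt} and the exactness of \eqref{GBeq}, the group $\HO^{n+1}_{\et}(V, \mathbb{Z}/p^{r}(n))$ is the subgroup of $\HO^{n+1}_{\et}(K, \mathbb{Z}/p^{r}(n))$ unramified along the closed point, and the refined purity Proposition \ref{tru2} controls the residue map there. Injectivity of the specialization $\HO^{n+1}_{\et}(V, \mathbb{Z}/p^{r}(n)) \to \HO^{n+1}_{\et}(k, \mathbb{Z}/p^{r}(n))$ would follow from the localization sequence together with purity, an unramified class vanishing on $k$ being forced to be zero; surjectivity would follow by lifting symbols, since $V \to k$ is surjective with henselian kernel and units together with the relevant Witt-vector data lift by Hensel's lemma and Teichm\"{u}ller representatives. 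I expect the main obstacle to be precisely this $p$-primary comparison across the special fibre: the mixed-characteristic complex on $V$ and the logarithmic de Rham-Witt complex on $k$ are not joined by any naive base change, so it is the full strength of the refined purity (Proposition \ref{tru2}) and of the Gersten resolution (Theorem \ref{GBmixex}) that must force the specialization map to be bijective. Combining the $p$-primary isomorphism with the prime-to-$p$ case through the splitting of $\mathbb{Z}/m$ then yields the result for every positive integer $m$.
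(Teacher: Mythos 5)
Your opening reductions are fine and consistent with the paper: the splitting of $\mathbb{Z}/m$ into primary parts, and the prime-to-$p$ case via classical rigidity for locally constant coefficients, are exactly how the paper disposes of the easy part (it simply cites \cite[Theorem 1.2.3]{Ge} for $m$ invertible). But the $p$-primary core of your argument has genuine gaps, and it is precisely the part you yourself flag as "the main obstacle" that is left unresolved. First, the d\'{e}vissage on relative dimension down to a henselian discrete valuation ring $V$ is not justified: each step $R\to R/(t_{1})$ is a restriction to a divisor that is still of mixed characteristic, so the claim that cohomology is unchanged along it is a rigidity statement of exactly the same nature and difficulty as the theorem itself; no mechanism is offered for it. Second, the endgame for $V$ conflates two different statements. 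The exact sequence (\ref{dim1ex}) of Proposition \ref{supp} relates $\HO^{n+1}_{\et}(V,\mathbb{Z}/p^{r}(n))$ to $\HO^{n+1}_{\et}(K,\mathbb{Z}/p^{r}(n))'$ and to $\HO^{n}_{\et}(k,\mathbb{Z}/p^{r}(n-1))$ --- a weight and degree drop --- whereas the theorem asserts an isomorphism onto $\HO^{n+1}_{\et}(k,\mathbb{Z}/p^{r}(n))$ in the same weight and degree; the Gersten/purity sequence does not produce the latter map, let alone its bijectivity. Third, "surjectivity by lifting symbols" does not apply: the target $\HO^{n+1}_{\et}(k,\mathbb{Z}/p^{r}(n))\cong \HO^{1}(k,\operatorname{W}\Omega^{n}_{r,\log})$ sits one degree above the symbol part $\operatorname{K}^{M}_{n}(k)/p^{r}$, so there are no symbols to lift.

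The paper's actual route is quite different and its key inputs are absent from your proposal. One restricts in a single step to the closed fiber $Y=\spec R/(\pi)$, where $\pi$ is a uniformizer of $A$ (henselian invariance for the pulled-back complex $i^{*}\mathbb{Z}/m(n)$, via \cite[proof of Proposition 2.2 b)]{Ge}); the whole difficulty is then to compare $i^{*}\mathbb{Z}/p(n)$ with the intrinsic complex $\mathbb{Z}/p(n)$ on $Y$. This is done by computing $\HO^{n+1}_{\et}(Y, i^{*}\tau_{\leq n}\mathbf{R}j_{*}\mu_{p}^{\otimes n})$ in two ways: via the Bloch--Kato filtration $\operatorname{U}^{\bullet}\operatorname{M}^{n}_{1}$ on the sheaf of $p$-adic vanishing cycles (Theorem \ref{BKmix}), whose positive graded pieces are (sub)quotients of coherent sheaves with vanishing higher cohomology on the henselian local scheme $Y$ (Lemma \ref{Uvan}), leaving $\Omega^{n}_{Y,\log}\oplus\Omega^{n-1}_{Y,\log}$ in degree zero; and via the purity triangle of Proposition \ref{tru2}. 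Matching the two computations forces $\HO^{n+1}_{\et}(Y,i^{*}\mathbb{Z}/p(n))\simeq\HO^{n+1}_{\et}(Y,\mathbb{Z}/p(n))$, after which the equicharacteristic rigidity Theorem \ref{hendR} (an Artin--Schreier argument on $\Omega^{n}/d\Omega^{n-1}$, not a symbol-lifting argument) carries $Y$ down to $k$. Without the Bloch--Kato structure theorem and the equicharacteristic rigidity statement, the $p$-primary comparison cannot be closed, so the proposal as written does not prove the theorem.
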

Theorem \ref{Ihenmix} is proved in the case where $m$ is invertible
in $A$ by (\cite[p.774, Theorem 1.2.3]{Ge}, \cite{V2}).

\vspace{2.0mm}

Finally we prove the following local-global principle in \S\ref{App}
by applying Theorem \ref{MTGC} and Theorem \ref{Ihenmix}.
%
\begin{thm}\upshape(Theorem \ref{LGG})
Let $A$ be an excellent henselian discrete valuation ring
of mixed characteristic
$(0, p)$ and 
$\pi$ a prime element of $A$.
Let $\mathfrak{X}$ be 
a connected proper smooth curve over $\spec A$,
$K$ the fraction field of $\mathfrak{X}$
and
$K_{(\eta)}$ the fraction field of
the henselization of
$\mathcal{O}_{\mathfrak{X}, \eta}$.

Then the local-global map
\begin{equation}\label{LGGI}
\HO^{n+1}_{\et}
\left(
K,
\mu^{\otimes n}_{m}
\right)
\to
\displaystyle
\prod_{
\substack{
\eta\in \mathfrak{X}^{(1)} 
}
}
\HO^{n+1}_{\et}
\left(
K_{(\eta)},
\mu^{\otimes n}_{m}
\right)
\end{equation}
is injective
for
integers $n\geq 0$
and
$m=p^{r}$
where $\mathfrak{X}^{(1)}$ is
the set of points of codimension $1$.
\end{thm}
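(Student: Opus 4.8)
The plan is to deduce the injectivity from the Gersten-type exactness of Theorem~\ref{MTGC} together with the rigidity isomorphism of Theorem~\ref{Ihenmix}, reducing the global statement on the arithmetic surface $\mathfrak{X}$ to a vanishing statement detected on its special fibre. Since $A$ has mixed characteristic $(0,p)$, the function field $K$ and every horizontal residue field $\kappa(\eta)$ (the residue field at a closed point of the generic fibre) have characteristic $0$, so $m=p^{r}$ is invertible there; by the quasi-isomorphism $\mathbb{Z}/m(n)_{\et}\xrightarrow{\sim}\mu^{\otimes n}_{m}[0]$ recalled in the introduction I may identify $\HO^{n+1}_{\et}(K,\mu^{\otimes n}_{m})$ with the \'{e}tale motivic cohomology $\HO^{n+1}_{\et}(K,\mathbb{Z}/p^{r}(n))$, and likewise on each $K_{\eta}$. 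Thus it suffices to prove injectivity of the local-global map with $\mathbb{Z}/p^{r}(n)$-coefficients.

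Let $\alpha$ lie in the kernel, so that $\alpha|_{K_{\eta}}=0$ for every $\eta\in\mathfrak{X}^{(1)}$. The first step shows that $\alpha$ extends over every local ring. For a codimension-$1$ point $\eta$ the ring $\mathcal{O}_{\mathfrak{X},\eta}$ is a discrete valuation ring with fraction field $K$, and since the residue map $\partial_{\eta}$ factors through the henselization and hence through $K_{\eta}$, the hypothesis yields $\partial_{\eta}(\alpha)=0$. Applying Theorem~\ref{MTGC} to $R=\mathcal{O}_{\mathfrak{X},x}$ for each closed point $x$, whose height-$1$ primes $\mathfrak{p}$ are exactly the codimension-$1$ points specialising to $x$ and whose strict henselizations satisfy $K_{\eta}\subseteq K_{\bar{\mathfrak{p}}}$, I find that $\alpha$ lies in the primed group $\HO^{n+1}_{\et}(K,\mathbb{Z}/p^{r}(n))'$ and maps to zero in $\bigoplus_{\mathfrak{p}}\HO^{n}_{\et}(\kappa(\mathfrak{p}),\mathbb{Z}/p^{r}(n-1))$. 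Exactness then produces a unique lift $\alpha_{x}\in\HO^{n+1}_{\et}(\mathcal{O}_{\mathfrak{X},x},\mathbb{Z}/p^{r}(n))$ with $\alpha_{x}|_{K}=\alpha$; as the restriction $\HO^{n+1}_{\et}(\mathcal{O}_{\mathfrak{X},x},\mathbb{Z}/p^{r}(n))\hookrightarrow\HO^{n+1}_{\et}(K,\mathbb{Z}/p^{r}(n))$ is injective, it is enough to show $\alpha_{x}=0$ for a single closed point $x$.

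The second step reduces this to the special fibre using the henselian hypothesis on $A$ and Theorem~\ref{Ihenmix}. Because $\mathfrak{X}$ is proper over the henselian local ring $A$, every closed point $x$ lies on $\mathfrak{X}_{s}$; let $\eta_{s}$ be the generic point of the component of $\mathfrak{X}_{s}$ through $x$, a codimension-$1$ point with $x\in\overline{\{\eta_{s}\}}$. From $\alpha|_{K_{\eta_{s}}}=0$ and the discrete valuation ring case of Theorem~\ref{MTGC} for $\mathcal{O}^{h}_{\mathfrak{X},\eta_{s}}$, the lift of $\alpha$ to $\HO^{n+1}_{\et}(\mathcal{O}^{h}_{\mathfrak{X},\eta_{s}},\mathbb{Z}/p^{r}(n))$ vanishes, and Theorem~\ref{Ihenmix} identifies this group with $\HO^{n+1}_{\et}(\kappa(\eta_{s}),\mathbb{Z}/p^{r}(n))$; hence $\alpha$ restricts to $0$ on the function field of the special fibre. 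The lifts $\alpha_{x}$ assemble, via rigidity at each closed point, into a global section of the Gersten sheaf on the integral curve $\mathfrak{X}_{s}$ whose value at $\eta_{s}$ is this vanishing class; since such a section injects into its generic stalk, it is zero, so the image of $\alpha_{x}$ under the rigidity isomorphism $\HO^{n+1}_{\et}(\mathcal{O}^{h}_{\mathfrak{X},x},\mathbb{Z}/p^{r}(n))\xrightarrow{\sim}\HO^{n+1}_{\et}(\kappa(x),\mathbb{Z}/p^{r}(n))$ vanishes, forcing $\alpha_{x}=0$ and therefore $\alpha=0$.

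I expect the main obstacle to be the compatibility and injectivity needed at the closed point: concretely, that the specialisation map $\HO^{n+1}_{\et}(\mathcal{O}_{\mathfrak{X},x},\mathbb{Z}/p^{r}(n))\to\HO^{n+1}_{\et}(\kappa(x),\mathbb{Z}/p^{r}(n))$, obtained by restricting to the henselization and applying Theorem~\ref{Ihenmix}, is injective, and that it matches, through the Gersten complex of $\mathfrak{X}_{s}$, the specialisation of the class on $\kappa(\eta_{s})$. This is delicate precisely because the coefficient complex $\mathbb{Z}/p^{r}(n)_{\et}$ changes nature across $\mathfrak{X}$---it is $\mu^{\otimes n}_{p^{r}}$ on the characteristic-$0$ generic fibre but the logarithmic de Rham--Witt complex on the characteristic-$p$ special fibre---so ordinary proper base change is unavailable and the entire argument must be run through the motivic complex and the purity and rigidity properties established above. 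Some care is also required to treat the horizontal points (characteristic-$0$ residue field) and the vertical points (characteristic-$p$ residue field) uniformly inside the single exact sequence of Theorem~\ref{MTGC}.
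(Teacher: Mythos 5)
Your first step is sound: the residue and strict-henselization vanishing do follow from $\alpha|_{K_{\eta}}=0$, so Theorem \ref{MTGC} applied to $R=\mathcal{O}_{\mathfrak{X},x}$ produces a lift $\alpha_{x}\in\HO^{n+1}_{\et}(\mathcal{O}_{\mathfrak{X},x},\mathbb{Z}/p^{r}(n))$, and it suffices to kill one such lift. The gap is in your final step, and it is exactly the point you flag as ``the main obstacle'' without resolving: you need the specialisation map $\HO^{n+1}_{\et}(\mathcal{O}_{\mathfrak{X},x},\mathbb{Z}/p^{r}(n))\to\HO^{n+1}_{\et}(\kappa(x),\mathbb{Z}/p^{r}(n))$ to be injective, i.e.\ rigidity for the \emph{non-henselian} local ring $\mathcal{O}_{\mathfrak{X},x}$. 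Theorem \ref{Ihenmix} only gives an isomorphism for the henselization $\mathcal{O}^{h}_{\mathfrak{X},x}$, and the intermediate map $\HO^{n+1}_{\et}(\mathcal{O}_{\mathfrak{X},x},\cdot)\to\HO^{n+1}_{\et}(\mathcal{O}^{h}_{\mathfrak{X},x},\cdot)$ is not injective in general: a class over $\mathcal{O}_{\mathfrak{X},x}$ can be split by a finite \'{e}tale neighbourhood of $x$ while remaining nonzero over $K$ (already for $n=1$ the Brauer class of $(x+a,\,y+b)$ near the origin of $\mathbb{A}^{2}_{\mathbb{Q}}$, with $(a,b)$ a split quaternion algebra over $\mathbb{Q}$, is unramified at the origin, trivial at the closed point, and nonzero generically). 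Such counterexamples are only excluded here by using the \emph{global} hypothesis at all closed points simultaneously, which your stalkwise argument cannot see. Likewise, the ``global section of the Gersten sheaf on $\mathfrak{X}_{s}$'' into which you want to assemble the classes $\alpha_{x}$ is never constructed in the paper; it would require a specialisation morphism of Gersten complexes across the change of characteristic, together with compatibilities between the rigidity isomorphisms at $\eta_{s}$ and at the closed points, none of which is available.

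The paper's route is different and avoids this. It first proves a purely local injectivity over each \emph{henselian} local ring $\mathcal{O}^{h}_{\mathfrak{X},x}$ (Proposition \ref{RHGL}), where Theorem \ref{henmix} does apply, by combining the Gersten sequence of Theorem \ref{GBmixex} with the commutative square comparing $R$, $\tilde{R}_{(\pi)}$ and their reductions mod $\pi$; this yields $\alpha|_{k(\mathcal{O}^{h}_{\mathfrak{X},x})}=0$ for every closed point $x$. The remaining descent from vanishing over all the henselian fraction fields to vanishing over $K$ itself --- precisely the step your argument elides --- is carried out as in Hu's Theorem 2.5 using the weak approximation statement for Milnor $K$-theory (Lemma \ref{WA}), which exploits that $Y^{(0)}$ is a finite set of independent valuations on $K$. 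Your proposal uses neither Proposition \ref{RHGL} nor Lemma \ref{WA}, and without an approximation ingredient of this kind the passage from henselian-local (or residue-field) vanishing back to $\alpha_{x}=0$ does not go through.
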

Suppose that $\mathfrak{X}$ is a regular scheme which is
flat of finite type over 
an excellent henselian discrete valuation 
ring $A$ and
whose fiber over the closed point of $\spec A$
is reduced normal crossing divisors on $\mathfrak{X}$.
Then the local-global map (\ref{LGGI}) is
injective 
for 
$n\geq 0$ 
and
$m$ which is prime to $\operatorname{char}(A)$
(cf. \cite[Theorem 3.3.6]{H-H-K}, \cite[Theorem 1.2]{Hu}
and \cite[Theorem 1.2]{SakG}).
\subsection*{Acknowledgements}
The first version of this paper (\cite{Sak}) was written in 2013-2016, 
while
the author was staying at Harish-Chandra Research Institute.
He thanks Harish-Chandra Research Institute for the supports
during his study and writing.
\subsection*{Notations}
Throughout this paper, $p$ will be a fixed prime, unless otherwise
stated.
For a scheme $X$, $X_{\et}$, $X_{Nis}$ and $X_{\mathrm{Zar}}$ denote the category of \'{e}tale schemes over $X$ equipped with the \'{e}tale, Nisnevich 
and
Zariski topology, respectively. 
For $t\in \{\et, \Nis, \operatorname{Zar}\}$, $\mathbb{S}_{X_{t}}$ denotes the category of sheaves on $X_{t}$.
$X^{(i)}$ denotes the set of points of codimension $i$ and $X_{(i)}$ denotes the set of
points of dimension $i$. 
$k(X)$ denotes the ring of rational functions on $X$ and
$\kappa(x)$ denotes the residue field of 
$x\in X$.
For scheme over $\mathbb{F}_{p}$,
\begin{math}
\Omega_{X}^{q}
=
\Omega_{X/\mathbf{Z}}^{q}
\end{math}
denotes
the exterior algebra over 
$\mathcal{O}_{X}$
of the sheaf 
$\Omega^{1}_{X/\mathbf{Z}}$
of absolute differentials on $X$
and
$\Omega_{X, \log}^{q}$ 
the part of 
$\Omega^{q}_{X}$
generated \'{e}tale locally by local sections of the
forms
\begin{equation*}
\frac{dx_{1}}{x_{1}}
\wedge
\cdots
\wedge
\frac{dx_{q}}{x_{q}}.
\end{equation*}
\section{\'{E}tale motivic cohomology}\label{EMC}
Let $D_{i}=\mathbb{Z}[t_{0}, \cdots, t_{i}]/(\displaystyle\sum_{j=0}^{i}t_{j}-1)$, and $\Delta^{i}=\spec D_{i}$ be the 
algebraic $i$-simplex. For an equi-dimensional scheme $X$, let $z^{n}(X, i)$ be the free abelian group on closed
integral subschemes of codimension $n$ of $X\times \Delta^{i}$, which intersect all faces property. Intersecting with 
faces defines the structure of a simplicial abelian group, and hence gives a (homological) complex 
$z^{n}(X, *)$.

\vspace{2.0mm}

The complex of sheaves $\mathbb{Z}(n)_{t}$ on the site $X_{t}$, where $t\in \{\et, \mathrm{Zar}\}$, is defined as the cohomological 
complex with
$z^{n}(-, 2n-i)$ in degree $i$. 
For an abelian group $A$ we define $A(n)$ to be $\mathbb{Z}(n)\otimes A$.

\vspace{2.0mm}

First we show an
improved version of 
the purity theorem of
the Bloch's cycle complex 
for \'{e}tale topology
(\cite[p.774, Theorem
1.2.1]{Ge}).

\begin{prop}\upshape
\label{tru2}
Let $A$ be a regular local ring 
with $\operatorname{dim}(A)\leq 1$, $\mathfrak{X}$ a scheme
which is essentially of finite type over 
$\spec A$ and $i:Y\to \mathfrak{X}$ a closed
subscheme of codimension $c$ with open complement $j:X\to \mathfrak{X}$.

Suppose that $X$ is essentially smooth over a regular ring of dimension at most one. 
Then we have a quasi-isomorphism
\begin{equation}\label{puri}
\tau_{\leq n+2}\Bigl(
\mathbb{Z}(n-c)_{\et}[-2c]
\Bigr)
\xrightarrow{\sim} 
\tau_{\leq n+2}\mathbf{R}i^{!}\mathbb{Z}(n)_{\et}
\end{equation}
and a quasi-isomorphism
\begin{equation}
\label{quasiil}
\tau_{\leq n+1}\Bigl(
\mathbb{Z}/m(n-c)_{\et}[-2c]
\Bigr)
\xrightarrow{\sim}\tau_{\leq n+1}\mathbf{R}i^{!}\mathbb{Z}/m(n)_{\et}
\end{equation}
for any positive integer $m$.

\end{prop}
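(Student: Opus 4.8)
=== PROOF PROPOSAL ===

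\textbf{Overview of the strategy.}
The plan is to follow the structure of Geisser's proof of the purity theorem \cite[p.774, Theorem 1.2.1]{Ge}, but to push the truncation degree higher by exploiting the vanishing built into the motivic complex together with the low dimension of the base $A$. The heart of any purity statement of this kind is the localization (or Gysin) triangle
\begin{equation*}
\mathbf{R}i^{!}\mathbb{Z}(n)_{\et}
\to
\mathbb{Z}(n)_{\et}|_{\mathfrak{X}}
\to
\mathbf{R}j_{*}\mathbb{Z}(n)_{\et}|_{X}
\xrightarrow{+1},
\end{equation*}
so the first step is to reduce the computation of $\mathbf{R}i^{!}\mathbb{Z}(n)_{\et}$, in the relevant range of degrees, to an understanding of $\mathbf{R}j_{*}$ applied to the \'{e}tale motivic complex on the smooth open part $X$. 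I would begin by reducing to the case where $c=1$ and $Y$ is (the spectrum of a local ring which is) smooth of codimension one, handling higher codimension by an evident d\'{e}vissage/induction on $c$ once the codimension-one case is in hand.

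\textbf{Key steps in order.}
First I would record the cohomological dimension and vanishing inputs: by \cite[p.779, Theorem 3.2 b)]{Ge} and \cite[p.786, Corollary 4.4]{Ge} the Zariski motivic cohomology $\HO^{i}_{\Zar}(-,\mathbb{Z}(n))$ vanishes for $i>n$, and combined with \cite[p.774, Theorem 1.2.2]{Ge} and \cite[p.781, Proposition 3.6]{Ge} this controls the \'{e}tale groups $\HO^{i}_{\et}(-,\mathbb{Z}(n))$ in high degrees, exactly as in the introductory computation. Second, I would compute the stalks of the cohomology sheaves $\HO^{q}(\mathbf{R}j_{*}\mathbb{Z}(n)_{\et})$ at a geometric point of $Y$ using the fact that $X$ is essentially smooth over a regular ring of dimension $\le 1$; here the local rings are strictly henselian, so $\mathbf{R}j_{*}$ reduces to Galois cohomology of the generic point of the local trait, and the relevant comparison with $\mathbb{Z}(n-1)_{\et}[-2]$ on $Y$ is visible. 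Third, feeding these computations back into the localization triangle and taking the truncations $\tau_{\le n+2}$ (respectively $\tau_{\le n+1}$) yields the asserted quasi-isomorphisms. The integral statement \eqref{puri} is extracted first, and the mod-$m$ statement \eqref{quasiil} follows by tensoring the triangle with $\mathbb{Z}/m$ and applying the long exact sequence, which costs one degree of truncation and accounts for the drop from $n+2$ to $n+1$.

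\textbf{Where the real work lies.}
The main obstacle is the control of $\mathbf{R}i^{!}$ (equivalently of $\mathbf{R}j_{*}$) precisely up to degree $n+2$ rather than the smaller range in which Geisser's original argument operates; the improvement hinges on showing that the top cohomology sheaf $\HO^{n+2}$ of $\mathbf{R}j_{*}\mathbb{Z}(n)_{\et}$, which is a priori only torsion, contributes trivially to $\mathbf{R}i^{!}$ after shifting, so that the comparison map is an isomorphism one degree further than before. This is exactly the point where the hypothesis $\dim(A)\le 1$ is used essentially: it bounds the \'{e}tale cohomological dimension of the residue fields and generic fibres entering the stalk computations, forcing the otherwise-obstructing groups to vanish. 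Establishing that vanishing — checking that no nonzero class in degree $n+2$ survives in $\mathbf{R}i^{!}\mathbb{Z}(n)_{\et}$ over what the shifted complex $\mathbb{Z}(n-c)_{\et}[-2c]$ predicts — is the delicate step, and I expect it to require a careful local analysis at height-one primes together with the exactness of the sequence \eqref{KC} in the discrete valuation ring case, which is precisely the input the paper establishes next.
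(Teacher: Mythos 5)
Your starting point (the localization triangle relating $\mathbf{R}i^{!}\mathbb{Z}(n)_{\et}$ to $i^{*}\mathbf{R}j_{*}\mathbb{Z}(n)_{\et}$), your identification of degree $n+2$ as the crux, and your explanation of why the mod-$m$ statement loses one degree of truncation are all consistent with the paper. But the step that is supposed to carry the proof does not work as described. You propose to compute the stalks of the sheaves $\mathcal{H}^{q}(\mathbf{R}j_{*}\mathbb{Z}(n)_{\et})$ at geometric points of $Y$ via ``Galois cohomology of the generic point of the local trait'': the strict henselization of $\mathfrak{X}$ at a point of $Y$ is not a trait in general, its open complement is not a single point, and the integral \'etale motivic cohomology of such a punctured strictly henselian spectrum in degrees $n+1$ and $n+2$ --- in particular its $p$-torsion in mixed characteristic --- is exactly what one cannot compute directly; it is the quantity the purity statement is meant to control. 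Worse, the auxiliary input you invoke for the ``delicate step,'' namely the exactness of the Gersten sequence over a discrete valuation ring, is Proposition \ref{supp} of the paper, whose proof uses Proposition \ref{tru2}; so your plan is circular within the paper's logical order. Two smaller points: the d\'evissage to $c=1$ would require $Y$ to be regular, which is not assumed, and the hypothesis $\operatorname{dim}(A)\leq 1$ is not there to bound the cohomological dimension of residue fields (which can be arbitrary for a discrete valuation ring).

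What the paper actually does is different and requires no stalk computation. It maps the \emph{Zariski} localization triangle
$\epsilon^{*}\mathbb{Z}(n-c)_{\Zar}[-2c]\to\epsilon^{*}i^{*}\mathbb{Z}(n)_{\Zar}\to\epsilon^{*}i^{*}\mathbf{R}j_{*}\mathbb{Z}(n)_{\Zar}$,
whose fiber term is already identified with $\mathbb{Z}(n-c)_{\Zar}[-2c]$ by the localization theorem for Bloch's cycle complex, to the \'etale localization triangle. The third vertical arrow is controlled by the Beilinson--Lichtenbaum-type quasi-isomorphism $\mathbb{Z}(n)_{\Zar}\simeq\tau_{\leq n+1}\mathbf{R}\epsilon_{*}\mathbb{Z}(n)_{\et}$ on $X$ (this is where the hypothesis that $X$ is essentially smooth over a regular ring of dimension at most one enters, via \cite[Theorem 1.2.2]{Ge} and \cite{V2}): it gives isomorphisms $\epsilon^{*}\mathbf{R}^{q}j_{*}\mathbb{Z}(n)_{\Zar}\simeq\mathbf{R}^{q}j_{*}\mathbb{Z}(n)_{\et}$ for $q\leq n+1$ \emph{and} an injection for $q=n+2$, the latter because $\mathbf{R}^{n+2}j_{*}$ applied to a complex concentrated in degrees $\leq n+1$ injects into $\mathbf{R}^{n+2}j_{*}$ of the full complex. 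The five lemma then yields the quasi-isomorphism (\ref{puri}) through degree $n+2$; this formal injectivity in the top degree is precisely what buys the improvement over \cite[Theorem 1.2.1]{Ge}, with no local analysis at height-one primes. If you want to repair your argument, replace the stalkwise Galois-cohomology computation by this comparison with the Zariski triangle.
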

\begin{proof}(cf. The proof of \cite[Theorem 1.2.1]{Ge})
Let 
$X_{\et}\xrightarrow{\epsilon}X_{\Zar}$ 
be the canonical map of sites.
Then we have a quasi-isomorphism
\begin{equation}
\label{Z-ej}
\tau_{\leq n+1}\epsilon^{*}\mathbf{R}j_{*}
\mathbb{Z}(n)_{\mathrm{Zar}}
\xrightarrow{\sim}
\tau_{\leq n+1}\epsilon^{*}\mathbf{R}j_{*}\mathbf{R}\epsilon_{*}\mathbb{Z}(n)_{\et}
\xrightarrow{\sim}
\tau_{\leq n+1}\mathbf{R}j_{*}\mathbb{Z}(n)_{\et}
\end{equation}
by (\cite[p.774, Theorem 1.2.2]{Ge}, \cite{V2}) (cf. \cite[p.787]{Ge}). Since
\begin{equation*}
\mathbf{R}^{n+2}j_{*}\left(
\tau_{\leq n+1}\mathbf{R}\epsilon_{*}\mathbb{Z}(n)_{\et}
\right)
\to 
\mathbf{R}^{n+2}j_{*}\left(
\mathbf{R}\epsilon_{*}\mathbb{Z}(n)_{\et}
\right)
\end{equation*}
is injective by a distinguished triangle
\begin{equation*}
\cdots\to
\tau_{\leq n+1}\mathbf{R}\epsilon_{*}\mathbb{Z}(n)_{\et}
\to
\mathbf{R}\epsilon_{*}\mathbb{Z}(n)_{\et}
\to
\tau_{\geq n+2}\mathbf{R}\epsilon_{*}\mathbb{Z}(n)_{\et}
\to
\cdots
\end{equation*}
and the equation
\begin{equation*}
\mathbf{R}j^{n+1}
(
\tau_{\geq n+2}\mathbf{R}\epsilon_{*}
\mathbb{Z}(n)_{\et}
)
=
0
,    
\end{equation*}
the composite map 
\begin{align*}
&\epsilon^{*}\mathbf{R}^{n+2}j_{*}
\mathbb{Z}(n)_{\mathrm{Zar}}
\xrightarrow{\sim}
\epsilon^{*}\mathbf{R}^{n+2}j_{*}\left(
\tau_{\leq n+1}\mathbf{R}\epsilon_{*}\mathbb{Z}(n)_{\et}
\right) \\
\hookrightarrow
&\epsilon^{*}\mathbf{R}^{n+2}j_{*}\mathbf{R}\epsilon_{*}\mathbb{Z}(n)_{\et}
\xrightarrow{\sim}
\mathbf{R}^{n+2}j_{*}\mathbb{Z}(n)_{\et}
\end{align*}
is injective
by (\cite[p.774, Theorem 1.2.2]{Ge}, \cite{V2}).
Moreover we have the map of 
distinguished triangles
\begin{equation*}
\begin{CD}
\epsilon^{*}\mathbb{Z}(n-c)_{\mathrm{Zar}}[-2c]
@>>>
\epsilon^{*}i^{*}\mathbb{Z}(n)_{\mathrm{Zar}}
@>>>
\epsilon^{*}i^{*}\mathbf{R}j_{*}\mathbb{Z}(n)_{\mathrm{Zar}} \\
@VVV @| @VVV \\
\mathbf{R}i^{!}\mathbb{Z}(n)_{\et}
@>>> 
i^{*}\mathbb{Z}(n)_{\et} 
@>>> 
i^{*}\mathbf{R}j_{*}\mathbb{Z}(n)_{\et}.
\end{CD}
\end{equation*}
Hence we have the quasi-isomorphism (\ref{puri}) 
by the five lemma. Moreover the quasi-isomorphism 
(\ref{quasiil}) follows from the quasi-isomorphism (\ref{puri}).
Therefore the statement follows.

\end{proof}
\begin{rem}\upshape
If $A$ is a Dedekind ring  and $i$ is the inclusion of one of the
closed fibers, we have the quasi-isomorphisms (\ref{puri}) and (\ref{quasiil})
as \cite[p.774, Theorem 1.2.1]{Ge}.
\end{rem}
\begin{lem}\upshape
\label{tru}
Let $F: \mathcal{A}\to \mathcal{B}$ be a left exact functor between abelian categories and
let $\mathcal{A}$ be a Grothendieck category. Then
\begin{equation*}
\mathbf{R}^{n+1}F(\tau_{\leq n}A^{\bullet})
=\operatorname{Ker}\Bigl(\mathbf{R}^{n+1}F(A^{\bullet})\to F(\mathcal{H}^{n+1}(A^{\bullet}))
\Bigr)
\end{equation*}
for any complex $A^{\bullet}$.
\end{lem}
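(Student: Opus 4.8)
The plan is to deduce everything from the canonical truncation distinguished triangle
\[
\tau_{\leq n}A^{\bullet}\to A^{\bullet}\to \tau_{\geq n+1}A^{\bullet}\to \tau_{\leq n}A^{\bullet}[1]
\]
in the derived category $D(\mathcal{A})$. Because $\mathcal{A}$ is a Grothendieck category it has enough injectives and admits $K$-injective resolutions, so the total derived functor $\mathbf{R}F$ is defined on all of $D(\mathcal{A})$ with no boundedness hypothesis on $A^{\bullet}$; this is exactly where the Grothendieck assumption is used. Applying $\mathbf{R}F$ to the triangle and passing to cohomology yields a long exact sequence
\[
\cdots\to \mathbf{R}^{n}F(\tau_{\geq n+1}A^{\bullet})\to \mathbf{R}^{n+1}F(\tau_{\leq n}A^{\bullet})\to \mathbf{R}^{n+1}F(A^{\bullet})\xrightarrow{\ \alpha\ }\mathbf{R}^{n+1}F(\tau_{\geq n+1}A^{\bullet})\to\cdots,
\]
and the whole problem reduces to computing the two terms attached to $\tau_{\geq n+1}A^{\bullet}$ and to identifying the map $\alpha$.

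First I would analyze $\mathbf{R}^{\bullet}F(\tau_{\geq n+1}A^{\bullet})$ by means of the hyperderived-functor spectral sequence
\[
E_2^{p,q}=\mathbf{R}^{p}F\bigl(\mathcal{H}^{q}(\tau_{\geq n+1}A^{\bullet})\bigr)\Longrightarrow \mathbf{R}^{p+q}F(\tau_{\geq n+1}A^{\bullet}).
\]
Since $\mathcal{H}^{q}(\tau_{\geq n+1}A^{\bullet})=0$ for $q\leq n$ and $\mathcal{H}^{n+1}(\tau_{\geq n+1}A^{\bullet})=\mathcal{H}^{n+1}(A^{\bullet})$, every term of total degree $\leq n$ vanishes, so $\mathbf{R}^{n}F(\tau_{\geq n+1}A^{\bullet})=0$; and the only term of total degree $n+1$ is $E_2^{0,n+1}=F(\mathcal{H}^{n+1}(A^{\bullet}))$, which is not hit by any differential, giving a canonical edge isomorphism $\mathbf{R}^{n+1}F(\tau_{\geq n+1}A^{\bullet})\cong F(\mathcal{H}^{n+1}(A^{\bullet}))$. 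Here I use left exactness of $F$ to identify $\mathbf{R}^{0}F=F$.

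With these two computations in hand, the vanishing $\mathbf{R}^{n}F(\tau_{\geq n+1}A^{\bullet})=0$ forces the arrow $\mathbf{R}^{n+1}F(\tau_{\leq n}A^{\bullet})\to \mathbf{R}^{n+1}F(A^{\bullet})$ to be injective with image $\operatorname{Ker}(\alpha)$, and the edge isomorphism rewrites the target of $\alpha$ as $F(\mathcal{H}^{n+1}(A^{\bullet}))$, which yields the asserted formula. The one point that needs genuine care — and which I expect to be the main obstacle — is checking that $\alpha$, transported along the edge isomorphism, is really the natural map $\mathbf{R}^{n+1}F(A^{\bullet})\to F(\mathcal{H}^{n+1}(A^{\bullet}))$ intended in the statement, i.e. the one induced by the composite $A^{\bullet}\to \tau_{\geq n+1}A^{\bullet}\to \mathcal{H}^{n+1}(A^{\bullet})[-(n+1)]$ followed by $\mathbf{R}^{0}F=F$. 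This is a compatibility between the connecting map of the truncation triangle and the edge map of the spectral sequence, which I would verify by naturality, comparing the spectral sequences of $A^{\bullet}$ and of $\tau_{\geq n+1}A^{\bullet}$ through the morphism $A^{\bullet}\to \tau_{\geq n+1}A^{\bullet}$, both of which degenerate in the relevant low degrees.
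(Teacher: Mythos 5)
Your proposal is correct and follows essentially the same route as the paper: the paper likewise applies $\mathbf{R}F$ to the truncation triangle, computes $\mathbf{R}^{n}F(\tau_{\geq n+1}A^{\bullet})=0$ and $\mathbf{R}^{n+1}F(\tau_{\geq n+1}A^{\bullet})\simeq F(\mathcal{H}^{n+1}(A^{\bullet}))$ via the hypercohomology spectral sequence for the bounded-below complex $\tau_{\geq n+1}A^{\bullet}$, and concludes from the long exact sequence. The compatibility of the connecting map with the edge map, which you flag as the delicate point, is exactly what the paper records in the remark following the lemma.
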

\begin{proof}
If $B^{\bullet}$ is bounded below, there is a convergent spectral sequence for 
the hyper-cohomology with 
\begin{equation}
\label{sshc}
\mathbb{R}^{p}F(\mathcal{H}^{q}(B^{\bullet}))\Rightarrow \mathbb{R}^{p+q}F(B^{\bullet})
\end{equation}
and 
\begin{equation*}
\mathbf{R}^{n}F(B^{\bullet})=\mathbb{R}^{n}F(B^{\bullet}). 
\end{equation*}
So 
\begin{align*}
\mathbf{R}^{n}F(\tau_{\geq n+1}A^{\bullet})&=\mathbb{R}^{n}F(\tau_{\geq n+1}A^{\bullet})=0,\\ 
\mathbf{R}^{n+1}F(\tau_{\geq n+1}A^{\bullet})&=\mathbb{R}^{n+1}F(\tau_{\geq n+1}A^{\bullet})\xrightarrow{\sim}
F(\mathcal{H}^{n+1}(A^{\bullet}))
\end{align*}
for any complex $A^{\bullet}$. By a distinguished triangle
\begin{equation*}
\cdots\to\tau_{\leq n}A^{\bullet}\to A^{\bullet}\to \tau_{\geq n+1}A^{\bullet}\to\cdots,
\end{equation*}
we have a distinguished triangle
\begin{equation*}
\cdots\to\mathbf{R}F\left(\tau_{\leq n}A^{\bullet}\right)\to 
\mathbf{R}F\left(A^{\bullet}\right)\to 
\mathbf{R}F\left(\tau_{\geq n+1}A^{\bullet}\right)\to \cdots.
\end{equation*}
Therefore,
\begin{equation*}
\mathbf{R}^{n+1}F(\tau_{\leq n}A^{\bullet})
=\operatorname{Ker}\Bigl(\mathbf{R}^{n+1}F(A^{\bullet})\to F(\mathcal{H}^{n+1}(A^{\bullet}))
\Bigr).
\end{equation*}
\end{proof}
\begin{rem}\upshape
Let $A^{\bullet}$ be a bounded below complex.
Since the edge maps of spectral sequence are natural maps, the morphism
\begin{equation*}
\mathbf{R}^{n+1}F(A^{\bullet})\to \mathbf{R}^{n+1}F(\tau_{\geq n+1}A^{\bullet})
\end{equation*}
corresponds to the edge map of 
\begin{math}
\mathbb{R}^{p}F(\mathcal{H}^{q}(A^{\bullet}))\Rightarrow \mathbb{R}^{p+q}F(A^{\bullet}).
\end{math}
\end{rem}
\begin{prop}\upshape
\label{supp}
Let $A$ be a discrete valuation ring with 
the fraction field $K$ and
the residue field $k$. 
Let 
$j: \spec K\to \spec A$
be the generic point
and
$K_{\bar{\mathfrak{m}}}$
the maximal unramified extension of $K$.

Then 
\begin{align*}
\operatorname{H}^{n+1}_{\et}
\left(
\spec A, 
\tau_{\leq n}\mathbf{R}j_{*}\mathbb{Z}/m(n)
\right) 
=
\HO^{n+1}_{\et}\left(
K,
\mathbb{Z}/m(n)
\right)^{\prime}
\\
\end{align*}
for integers $n\geq 0$ and $m>0$ where
\begin{align*}
&\HO^{n+1}_{\et}
\left(
K,
\mathbb{Z}/m(n)
\right)^{\prime}
\\
=&\operatorname{Ker}\Bigl(
\operatorname{H}^{n+1}_{\et}
\left(
K, \mathbb{Z}/m(n)
\right)
\to
\operatorname{H}^{n+1}_{\et}
\left(
K_{\bar{\mathfrak{m}}}, 
\mathbb{Z}/m(n)
\right)
\Bigr).
\end{align*}
Moreover the sequence
\begin{align}\label{dim1ex}
0
\to
\HO_{\et}^{n+1}
\left(
A,
\mathbb{Z}/m(n)
\right)
\to
\HO_{\et}^{n+1}
\left(
K,
\mathbb{Z}/m(n)
\right)^{\prime}
\to
\HO^{n}_{\et}
\left(
k,
\mathbb{Z}/m(n-1)
\right)
\end{align}
is exact.
\end{prop}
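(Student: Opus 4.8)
The plan is to establish the two assertions separately, deducing the first from Lemma~\ref{tru} and the second from the localization triangle combined with the purity isomorphism of Proposition~\ref{tru2}. First I would apply Lemma~\ref{tru} to the left exact functor $F=\Gamma(\spec A,-)$ on the category of étale sheaves on $\spec A$ and to $A^{\bullet}=\mathbf{R}j_{*}\mathbb{Z}/m(n)$. Since $\HO^{n+1}_{\et}(\spec A,\mathbf{R}j_{*}\mathbb{Z}/m(n))=\HO^{n+1}_{\et}(K,\mathbb{Z}/m(n))$ and $\mathcal{H}^{n+1}(\mathbf{R}j_{*}\mathbb{Z}/m(n))=\mathbf{R}^{n+1}j_{*}\mathbb{Z}/m(n)$, the lemma gives that $\HO^{n+1}_{\et}(\spec A,\tau_{\leq n}\mathbf{R}j_{*}\mathbb{Z}/m(n))$ is the kernel of the map $\HO^{n+1}_{\et}(K,\mathbb{Z}/m(n))\to\Gamma(\spec A,\mathbf{R}^{n+1}j_{*}\mathbb{Z}/m(n))$. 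The key observation is that $\mathbf{R}^{n+1}j_{*}\mathbb{Z}/m(n)$ is supported on the closed point (the higher direct images along the open immersion $j$ vanish on $\spec K$), so it is of the form $i_{*}\mathcal{G}$ for a Galois module $\mathcal{G}$ over $k$ whose stalk is $\HO^{n+1}_{\et}(K_{\bar{\mathfrak m}},\mathbb{Z}/m(n))$, the strict henselization $A_{\bar{\mathfrak m}}$ having fraction field $K_{\bar{\mathfrak m}}$. Thus $\Gamma(\spec A,\mathbf{R}^{n+1}j_{*}\mathbb{Z}/m(n))=\mathcal{G}^{G_{k}}$ injects into that stalk, and since the restriction $\HO^{n+1}_{\et}(K,\mathbb{Z}/m(n))\to\HO^{n+1}_{\et}(K_{\bar{\mathfrak m}},\mathbb{Z}/m(n))$ factors through $\Gamma(\spec A,\mathbf{R}^{n+1}j_{*}\mathbb{Z}/m(n))$, the two kernels agree; this is the first displayed equality.

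For the exact sequence I would work with the localization triangle $i_{*}\mathbf{R}i^{!}\mathbb{Z}/m(n)\to\mathbb{Z}/m(n)\to\mathbf{R}j_{*}\mathbb{Z}/m(n)$ on $(\spec A)_{\et}$. By Proposition~\ref{tru2} (the quasi-isomorphism (\ref{quasiil}) with $c=1$), together with the fact that the cohomology sheaves of $\mathbb{Z}/m(n-1)_{\et}$ vanish above degree $n-1$, one has $\tau_{\leq n+1}\mathbf{R}i^{!}\mathbb{Z}/m(n)\simeq\mathbb{Z}/m(n-1)[-2]$, hence $\HO^{q}_{\et}(\spec k,\mathbf{R}i^{!}\mathbb{Z}/m(n))=\HO^{q-2}_{\et}(k,\mathbb{Z}/m(n-1))$ for $q\leq n+1$. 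Because $\mathbb{Z}/m(n)_{\et}$ on $\spec A$ is concentrated in degrees $\leq n$ (by the same computations recalled in the introduction), the unit $\mathbb{Z}/m(n)\to\mathbf{R}j_{*}\mathbb{Z}/m(n)$ factors through a morphism $\bar u\colon\mathbb{Z}/m(n)\to\tau_{\leq n}\mathbf{R}j_{*}\mathbb{Z}/m(n)$, and the octahedral axiom yields a triangle relating $\operatorname{fib}(\bar u)$, $i_{*}\mathbf{R}i^{!}\mathbb{Z}/m(n)$ and $(\tau_{\geq n+1}\mathbf{R}j_{*}\mathbb{Z}/m(n))[-1]$; as the last term sits in degrees $\geq n+2$, this gives $\tau_{\leq n+1}\operatorname{fib}(\bar u)\simeq i_{*}(\mathbb{Z}/m(n-1)[-2])$.

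I would then read the sequence off from the long exact cohomology sequence of $\operatorname{fib}(\bar u)\to\mathbb{Z}/m(n)\xrightarrow{\bar u}\tau_{\leq n}\mathbf{R}j_{*}\mathbb{Z}/m(n)$, using the first part to identify the middle group with $\HO^{n+1}_{\et}(K,\mathbb{Z}/m(n))^{\prime}$. Injectivity of the first map amounts to the vanishing of the image of $\HO^{n+1}_{\et}(\spec A,\operatorname{fib}(\bar u))=\HO^{n-1}_{\et}(k,\mathbb{Z}/m(n-1))$, which holds because the preceding boundary $\HO^{n}_{\et}(K,\mathbb{Z}/m(n))\to\HO^{n-1}_{\et}(k,\mathbb{Z}/m(n-1))$ is the residue map and is surjective, being split by cup product with the class of a uniformizer. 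For exactness at the middle, the connecting map $\tau_{\leq n}\mathbf{R}j_{*}\mathbb{Z}/m(n)\to\operatorname{fib}(\bar u)[1]$ has source in degrees $\leq n$, hence factors through $(\tau_{\leq n+1}\operatorname{fib}(\bar u))[1]=i_{*}(\mathbb{Z}/m(n-1)[-1])$; on $\HO^{n+1}$ this exhibits the boundary as a map to $\HO^{n}_{\et}(k,\mathbb{Z}/m(n-1))$ followed by the injection $\HO^{n}_{\et}(k,\mathbb{Z}/m(n-1))=\HO^{n+2}_{\et}(\spec A,\tau_{\leq n+1}\operatorname{fib}(\bar u))\hookrightarrow\HO^{n+2}_{\et}(\spec A,\operatorname{fib}(\bar u))$, so its kernel coincides with the kernel of the full boundary, namely the image of $\HO^{n+1}_{\et}(A,\mathbb{Z}/m(n))$.

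The main obstacle is the degree $n+2$ term: the purity isomorphism only controls $\tau_{\leq n+1}\mathbf{R}i^{!}\mathbb{Z}/m(n)$, so the naive localization boundary lands in the uncontrolled group $\HO^{n+2}_{\et}(\spec k,\mathbf{R}i^{!}\mathbb{Z}/m(n))$. The whole force of the statement is that restricting to the primed subgroup $\HO^{n+1}_{\et}(K,\mathbb{Z}/m(n))^{\prime}$ — equivalently, replacing $\mathbf{R}j_{*}$ by $\tau_{\leq n}\mathbf{R}j_{*}$ — forces the boundary, via the orthogonality between complexes in degrees $\leq n$ and those in degrees $\geq n+1$, to factor through the purity-controlled subgroup $\HO^{n}_{\et}(k,\mathbb{Z}/m(n-1))$. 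Checking the compatibility of this factorization with the residue map, and justifying the sub-weight concentration of $\mathbb{Z}/m(n)_{\et}$ and $\mathbb{Z}/m(n-1)_{\et}$ over a general discrete valuation ring, is where the technical care is concentrated.
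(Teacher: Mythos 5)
Your proposal is correct and follows essentially the same route as the paper: the first equality via Lemma \ref{tru} together with the observation that sections of $\mathbf{R}^{n+1}j_{*}\mathbb{Z}/m(n)$ inject into the stalk at the closed geometric point (which is $\HO^{n+1}_{\et}(K_{\bar{\mathfrak{m}}},\mathbb{Z}/m(n))$), and the exact sequence via the localization triangle, the purity quasi-isomorphism of Proposition \ref{tru2} with $c=1$, and the surjectivity of the residue map $\HO^{n}_{\et}(K,\mathbb{Z}/m(n))\to\HO^{n-1}_{\et}(k,\mathbb{Z}/m(n-1))$ (which the paper establishes by identifying it with the Milnor $K$-theory symbol map via \cite[Lemma 3.2]{Ge-L}). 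Your treatment of the truncation issues around the octahedron is more explicit than the paper's, which simply asserts the triangle $\mathbb{Z}/m(n)\to\tau_{\leq n}\mathbf{R}j_{*}\mathbb{Z}/m(n)\to i_{*}\mathbb{Z}/m(n-1)[-1]$, but the underlying argument is the same.
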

\begin{proof}
For each $x\in \spec A$, we choose a geometric point 
$u_{x}: \bar{x}\to \spec A$. 

Then 
\begin{equation*}
F\to 
\displaystyle
\prod_{x\in \spec A} (u_{x})_{*}(u_{x})^{*}F 
\end{equation*}
is injective for a sheaf $F$ on 
$(\spec A)_{\et}$ (\cite[p.90, III, Remark 1.20 (c)]{M}). Hence
\begin{align*}
&\operatorname{H}^{n+1}_{\et}
\left(
\spec A, 
\tau_{\leq n}\mathbf{R}j_{*}\mathbb{Z}/m(n)
\right) \\
=&
\operatorname{Ker}\Bigl(
\operatorname{H}^{n+1}_{\et}
\left(
K, \mathbb{Z}/m(n)
\right)
\to 
\Gamma
\left(
\spec A,
\mathbf{R}^{n+1}j_{*}\mathbb{Z}/m(n)
\right)
\Bigr)
\\
=&
\operatorname{Ker}\Bigl(
\operatorname{H}^{n+1}_{\et}
\left(
K, \mathbb{Z}/m(n)
\right)
\to 
\prod_{x\in \spec A}
\left(
\mathbf{R}^{n+1}j_{*}\mathbb{Z}/m(n)
\right)_{\bar{x}}
\Bigr)\\
=&\operatorname{Ker}\Bigl(
\operatorname{H}^{n+1}_{\et}
\left(
K, \mathbb{Z}/m(n)
\right)
\to
\operatorname{H}^{n+1}_{\et}
\left(
K_{\bar{\mathfrak{m}}}, 
\mathbb{Z}/m(n)
\right)
\Bigr)
\end{align*}
by Lemma \ref{tru}.

Let 
$i: \spec k\to \spec A$
the closed immersion.
Since  we have a distinguished triangle
\begin{align*}
\cdots
\to
\mathbb{Z}/m(n)_{\et}
\to
\tau_{\leq n}\mathbf{R}j_{*}\mathbb{Z}/m(n)_{\et}
\to
i_{*}\mathbb{Z}/m(n-1)_{\et}
\to
\cdots
\end{align*}
by Proposition \ref{tru2}
and \cite[p.786, Corollary 4.4]{Ge}, the sequence 
\begin{align*}
\HO^{n}_{\et}
\left(
K,
\mathbb{Z}/m(n)
\right)
\to
\HO^{n-1}_{\et}
\left(
k,
\mathbb{Z}/m(n-1)
\right)
\to
\HO^{n+1}_{\et}
\left(
A,
\mathbb{Z}/m(n)
\right)\\
\to
\HO^{n+1}_{\et}
\left(
\spec A,
\tau_{\leq n}\mathbf{R}j_{*}\mathbb{Z}/m(n)
\right)
\to
\HO^{n}_{\et}
\left(
k,
\mathbb{Z}/m(n-1)
\right)
\end{align*}
is exact.

By \cite[p.774, Theorem 1.2.2]{Ge} 
and \cite[Lemma 3.2]{Ge-L},
the homomorphism
\begin{equation}\label{sym}
\HO^{n}_{\et}
\left(
K,
\mathbb{Z}/m(n)
\right)
\to
\HO^{n-1}_{\et}
\left(
k,
\mathbb{Z}/m(n-1)
\right)
\end{equation}
agrees with the homomorphism
\begin{equation*}
\operatorname{K}^{M}_{n}(K)/m
\to
\operatorname{K}^{M}_{n-1}(
k)/m
\end{equation*}
which is induced by the symbol map of the Milnor $K$-group.

Hence the homomorphism (\ref{sym}) is surjective and 
the homomorphism 
\begin{equation*}
\HO^{n+1}_{\et}
\left(
A,
\mathbb{Z}/m(n)
\right)\\
\to
\HO^{n+1}_{\et}
\left(
\spec A,
\tau_{\leq n}\mathbf{R}j_{*}\mathbb{Z}/m(n)
\right)
\end{equation*}
is injective.
Therefore the statement follows.

\end{proof}
\begin{prop}\upshape
Let $A$ be a henselian discrete valuation ring with the fraction field
 $K$ and the residue field $k$. 
Let $j: \spec K\to \spec A$ be the generic point.
Suppose that $\operatorname{char}(k)=p>0$. Then
\begin{align*}
&\operatorname{H}^{n+1}_{\et}
\left(
\spec A, 
\tau_{\leq n}\left(
\mathbf{R}j_{*}\mathbb{Z}/p^{r}(n)
\right)
\right)\\
=&
\operatorname{H}^{1}
\left(
\operatorname{Gal}(k_{s}/k), 
\operatorname{K}^{M}_{n}(K_{\bar{\mathfrak{m}}})/p^{r}
\right)
\end{align*}
for integers $n\geq 0$ and $r>0$. Here $k_{s}$ is the separable closure
of $k$, $K_{\bar{\mathfrak{m}}}$ is the maximal unramified 
extension of $K$
and 
$\operatorname{K}^{M}_{n}(K_{\bar{\mathfrak{m}}})$
is the $n$-th Milnor $K$-group of 
$K_{\bar{\mathfrak{m}}}$. 
\end{prop}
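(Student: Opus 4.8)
The plan is to combine Proposition \ref{supp} with a Hochschild--Serre argument whose whole force comes from the residual characteristic. Since $A$ is henselian, finite unramified extensions of $K$ correspond to finite separable extensions of $k$, so the maximal unramified extension realizes the Galois group $G := \operatorname{Gal}(K_{\bar{\mathfrak{m}}}/K) \cong \operatorname{Gal}(k_s/k)$. By Proposition \ref{supp} (applied with $m=p^r$) the left-hand side is exactly $\HO^{n+1}_{\et}(K, \mathbb{Z}/p^r(n))'$, i.e. the kernel of the restriction map $\HO^{n+1}_{\et}(K, \mathbb{Z}/p^r(n)) \to \HO^{n+1}_{\et}(K_{\bar{\mathfrak{m}}}, \mathbb{Z}/p^r(n))$. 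Thus it suffices to identify this kernel with $\operatorname{H}^1(G, \KM_n(K_{\bar{\mathfrak{m}}})/p^r)$.

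Next I would write down the Hochschild--Serre spectral sequence for $K_{\bar{\mathfrak{m}}}/K$,
\[
E_2^{i,j} = \operatorname{H}^i(G, \HO^j_{\et}(K_{\bar{\mathfrak{m}}}, \mathbb{Z}/p^r(n))) \Rightarrow \HO^{i+j}_{\et}(K, \mathbb{Z}/p^r(n)).
\]
The crucial observation is that $\operatorname{char}(k)=p$ forces $\operatorname{cd}_p(G) = \operatorname{cd}_p(k) \le 1$; since every coefficient group occurring here is $p^r$-torsion, we get $E_2^{i,j} = 0$ for all $i \ge 2$. Hence only the two columns $i=0,1$ survive, every differential vanishes, and the spectral sequence degenerates at $E_2$. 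The induced filtration on total degree $n+1$ then produces the short exact sequence
\[
0 \to \operatorname{H}^1(G, \HO^n_{\et}(K_{\bar{\mathfrak{m}}}, \mathbb{Z}/p^r(n))) \to \HO^{n+1}_{\et}(K, \mathbb{Z}/p^r(n)) \xrightarrow{\operatorname{res}} \HO^{n+1}_{\et}(K_{\bar{\mathfrak{m}}}, \mathbb{Z}/p^r(n))^{G} \to 0,
\]
whose right-hand arrow is the edge map, i.e. the restriction corestricted to the $G$-invariants. Because the inclusion of invariants into $\HO^{n+1}_{\et}(K_{\bar{\mathfrak{m}}}, \mathbb{Z}/p^r(n))$ is injective, the kernel of restriction to $K_{\bar{\mathfrak{m}}}$ coincides with the kernel of this edge map, namely $\operatorname{H}^1(G, \HO^n_{\et}(K_{\bar{\mathfrak{m}}}, \mathbb{Z}/p^r(n)))$. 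Combined with the previous paragraph this already computes the left-hand side as $\operatorname{H}^1(G, \HO^n_{\et}(K_{\bar{\mathfrak{m}}}, \mathbb{Z}/p^r(n)))$.

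To finish I would identify the coefficient module. Exactly as in the treatment of (\ref{sym}) in the proof of Proposition \ref{supp}, the symbol map realizes $\HO^n_{\et}(K_{\bar{\mathfrak{m}}}, \mathbb{Z}/p^r(n)) \cong \KM_n(K_{\bar{\mathfrak{m}}})/p^r$, using \cite[p.774, Theorem 1.2.2]{Ge} and \cite[Lemma 3.2]{Ge-L} (this is the Bloch--Kato--Gabber description when $K_{\bar{\mathfrak{m}}}$ has characteristic $p$, and the norm residue isomorphism when it has characteristic $0$). Substituting this into the expression above yields the asserted equality.

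The step I expect to be the only genuine obstacle is the vanishing $E_2^{i,j}=0$ for $i \ge 2$. Everything rests on $\operatorname{cd}_p(\operatorname{Gal}(k_s/k)) \le 1$, the classical fact that a field of characteristic $p$ has $p$-cohomological dimension at most one; it is precisely this input that forces the answer into the clean shape $\operatorname{H}^1(G,-)$, with no higher contributions, regardless of how large the lower-degree groups $\HO^{<n}_{\et}(K_{\bar{\mathfrak{m}}}, \mathbb{Z}/p^r(n))$ might be. I would therefore isolate this cohomological-dimension bound as the key lemma and keep the rest of the argument purely formal.
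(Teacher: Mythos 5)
Your argument is correct and is essentially the paper's own proof in different clothing: over the henselian base the Hochschild--Serre spectral sequence for $K_{\bar{\mathfrak{m}}}/K$ is precisely the Leray spectral sequence $\operatorname{H}^{l}_{\et}(\spec A, \mathbf{R}^{m}j_{*}\mathbb{Z}/p^{r}(n))\Rightarrow \operatorname{H}^{l+m}_{\et}(K,\mathbb{Z}/p^{r}(n))$ that the paper runs, and both proofs hinge on the same two inputs, namely $\operatorname{cd}_{p}(k)\leq 1$ killing everything in filtration degree $\geq 2$ and the Bloch--Kato identification $\operatorname{H}^{n}_{\et}(K_{\bar{\mathfrak{m}}},\mathbb{Z}/p^{r}(n))\cong \operatorname{K}^{M}_{n}(K_{\bar{\mathfrak{m}}})/p^{r}$. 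The only cosmetic difference is that you route through Proposition \ref{supp} to express the left-hand side as the kernel of restriction, where the paper applies Lemma \ref{tru} directly and reads off the $E_{2}^{1,n}$-term.
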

\begin{proof}\upshape
Let 
\begin{math}
E^{l, m}_{2}\Rightarrow E^{l+m}
\end{math}
be a spectral sequence. Suppose that $E^{l, m}_{2}=0$ for $l<0$ or $m<0$.
Then we have a filtration
\begin{equation*}
0
\subset
F^{l+m}_{l+m}
\subset
\cdots
\subset
F^{p+q}_{1}
\subset
F^{p+q}_{0}
=
E^{p+q}
\end{equation*}
such that
\begin{equation*}
F_{l}^{l+m}/F_{l+1}^{l+m}
\simeq
E^{l, m}_{\infty}
\end{equation*}
and we can define the morphism
\begin{equation}\label{spe}
\operatorname{Ker}\left(E^{n}\to E^{0, n}_{2}\right)\simeq F^{n}_{1}
\twoheadrightarrow F^{n}_{1}/F^{n}_{2}\simeq
E^{1, n-1}_{\infty}\hookrightarrow E^{1, n-1}_{2}.
\end{equation}
Moreover, the morphism (\ref{spe}) is an isomorphism if $E^{l, m}_{2}=0$ for $l\geq 2$.

Let $i: \spec k\to \spec A$ be the closed immersion.
Since $k$ has $p$-cohomological dimension at most $1$,
\begin{equation*}
\operatorname{H}^{l}_{\et}
\left(
\spec A, 
\mathbf{R}^{m}j_{*}\mathbb{Z}/p^{r}(n)
\right)
=
\operatorname{H}^{l}_{\et}
\left(
\spec k, 
i^{*}\mathbf{R}^{m}j_{*}\mathbb{Z}/p^{r}(n)
\right)
=
0
\end{equation*}
for $l\geq 2$ by 
\cite[p.777, The proof of Proposition 2.2 b)]{Ge} 
and
\begin{align*}
&\operatorname{Ker}
\Bigl(
\operatorname{H}^{n+1}_{\et}
(\spec K, \mathbb{Z}/p^{r}(n))
\to 
\Gamma
\left(
\spec A,
\mathbf{R}^{n+1}j_{*}\mathbb{Z}/m(n)
\right)
\Bigr)\\
=&\operatorname{H}^{1}_{\et}
\left(
\spec A, \mathbf{R}^{n}j_{*}\mathbb{Z}/p^{r}(n)
\right)\\
=&
\operatorname{H}^{1}_{\et}
\left(
\spec k, i^{*}\mathbf{R}^{n}j_{*}\mathbb{Z}/p^{r}(n)
\right)
\end{align*}
by the spectral sequence
\begin{equation*}
\operatorname{H}^{l}_{\et}
\left(
\spec A, \mathbf{R}^{m}j_{*}\mathbb{Z}/p^{r}(n)
\right)
\Rightarrow
\operatorname{H}^{l+m}_{\et}
\left(
K, 
\mathbb{Z}/p^{r}(n)
\right)
\end{equation*}
and
\cite[p.777, The proof of Proposition 2.2 b)]{Ge}.
Moreover
\begin{align*}
\left(
i^{*}\mathbf{R}^{n}j_{*}\mathbb{Z}/p^{r}(n)
\right)_{\bar{\mathfrak{m}}}
=
\operatorname{H}^{n}_{\et}
\left(
\spec K_{\bar{\mathfrak m}}, \mathbb{Z}/p^{r}(n)
\right)
=
\operatorname{K}^{M}_{n}(K_{\bar{\mathfrak{m}}})/p^{r}
\end{align*}
by \cite[p.88, III, Theorem 1.15]{M} and \cite[p.131, Theorem (5.12)]{B-K}. 
Here $\bar{\mathfrak{m}}$ is a geometric point of
$\spec A$ such that 
$\kappa(\bar{\mathfrak{m}})$
is the separable closure of $k=\kappa(\mathfrak{m})$
and 
$\left(
i^{*}\mathbf{R}^{n}j_{*}\mathbb{Z}/p^{r}(n)
\right)_{\bar{\mathfrak{m}}}$
is the stalk of
$i^{*}\mathbf{R}^{n}j_{*}\mathbb{Z}/p^{r}(n)$
at $\bar{\mathfrak{m}}$.
Hence
\begin{align*}
&\operatorname{Ker}
\Bigl(
\operatorname{H}^{n+1}_{\et}
(\spec K, \mathbb{Z}/p^{r}(n))
\to 
\Gamma
\left(
\spec A,
\mathbf{R}^{n+1}j_{*}\mathbb{Z}/m(n)
\right)
\Bigr)\\
=&
\operatorname{H}^{1}
\left(
\operatorname{Gal}(k_{s}/k), 
\operatorname{K}^{M}_{n}(K_{\bar{\mathfrak{m}}})/p^{r}
\right).
\end{align*}
Therefore the statement follows from Lemma \ref{tru}.
\end{proof}

Let $A$ be a discrete valuation ring with the maximal ideal
$\mathfrak{m}$ and the fraction field $K$. Suppose that 
$\operatorname{char}(k)=p>0$.

Let $j: \spec K\to \spec A$ be the generic point,
$i: \spec k\to \spec A$ the closed immersion,
$k_{s}$ the separable closure
of $k$ and $K_{\bar{\mathfrak{m}}}$ the maximal unramified extension of $K$.

Then
\begin{align*}
&\operatorname{H}^{n}_{\et}
\left(
\spec A,
i_{*}\mathbb{Z}/p^{r}(n-1)
\right)\\
=&\operatorname{H}^{n}_{\et}
\left(
\spec k, 
\mathbb{Z}/p^{r}(n-1)
\right)\\
=&\operatorname{H}^{1}
\left(
\operatorname{Gal}(k_{s}/k), 
\operatorname{K}^{M}_{n-1}(k_{s})/p^{r}
\right)
\end{align*}
by \cite[Theorem 8.5]{Ge-L} 
and \cite[p.117, Corollary (2.8)]{B-K}.

\begin{prop}\upshape
Let the notations be same as above.
Suppose that $A$ is a henselian discrete valuation ring. 
Then 
the homomorphism
\begin{equation*}
\operatorname{H}^{n+1}_{\et}
\left(
\spec A, 
\tau_{\leq n}\left(
\mathbf{R}j_{*}\mathbb{Z}/p^{r}(n)
\right)
\right)
\to
\operatorname{H}^{n}_{\et}
\left(
\spec A, 
i_{*}\left(
\mathbb{Z}/p^{r}(n-1)
\right)
\right)
\end{equation*}
which is induced by the map
\begin{equation*}
\tau_{\leq n}
\left(
\mathbf{R}j_{*}\mathbb{Z}/p^{r}(n)_{\mathrm{\acute{e}t}}
\right)
\to
i_{*}\left(
\mathbb{Z}/p^{r}(n-1)_{\mathrm{\acute{e}t}}
\right)[-1]
\end{equation*}
agrees with the homomorphism
\begin{equation}\label{Ka}
\operatorname{H}^{1}
\left(
\operatorname{Gal}(k_{s}/k),
\mathrm{K}_{n}^{\mathrm{M}}
\left(K_{\bar{\mathfrak{m}}}
\right)
/p^{r}
\right)
\to
\operatorname{H}^{1}
\left(
\operatorname{Gal}(k_{s}/k),
\mathrm{K}_{n-1}^{\mathrm{M}}(k_{s})/p^{r}
\right)
\end{equation}
which is induced by the symbol map of the
Milnor $K$-group.

\end{prop}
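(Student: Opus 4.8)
The plan is to identify the map of the statement as the one induced on \'{e}tale hypercohomology by the connecting morphism of the localization triangle underlying Proposition~\ref{tru2}, and then to compute it stalkwise at the closed geometric point $\bar{\mathfrak{m}}$, where it will become the residue map of Milnor $K$-theory exactly as in the proof of Proposition~\ref{supp}. First I would apply $\mathbf{R}\Gamma(\spec A, -)$ to the given morphism of complexes and use that $A$ is henselian, so that $\HO^{l}_{\et}(\spec A, F) = \HO^{l}_{\et}(\spec k, i^{*}F) = \HO^{l}(\operatorname{Gal}(k_{s}/k), F_{\bar{\mathfrak{m}}})$ for every torsion sheaf $F$, just as in the two preceding propositions. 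Under this reduction the source is computed, via the isomorphism~(\ref{spe}) for the spectral sequence $\HO^{l}_{\et}(\spec A, \mathbf{R}^{m}j_{*}\mathbb{Z}/p^{r}(n)) \Rightarrow \HO^{l+m}_{\et}(K, \mathbb{Z}/p^{r}(n))$, as $\HO^{1}(\operatorname{Gal}(k_{s}/k), (\mathbf{R}^{n}j_{*}\mathbb{Z}/p^{r}(n))_{\bar{\mathfrak{m}}}) = \HO^{1}(\operatorname{Gal}(k_{s}/k), \KM_{n}(K_{\bar{\mathfrak{m}}})/p^{r})$, and the target is computed, via the hypercohomology spectral sequence for $\mathbb{Z}/p^{r}(n-1)_{\et}$ on $\spec k$ together with the quasi-isomorphism $\mathbb{Z}/p^{r}(n-1)_{\et} \simeq \operatorname{W}\Omega^{n-1}_{r,\log}[-(n-1)]$, as $\HO^{1}(\operatorname{Gal}(k_{s}/k), \KM_{n-1}(k_{s})/p^{r})$.

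Since the isomorphism~(\ref{spe}) and both spectral sequences are natural in the complex, the map in question is thereby identified with $\HO^{1}(\operatorname{Gal}(k_{s}/k), \phi)$, where $\phi$ is the stalk at $\bar{\mathfrak{m}}$ of the map of degree-$n$ cohomology sheaves
\[
\mathbf{R}^{n}j_{*}\mathbb{Z}/p^{r}(n) = \mathcal{H}^{n}\bigl(\tau_{\leq n}\mathbf{R}j_{*}\mathbb{Z}/p^{r}(n)\bigr) \to \mathcal{H}^{n}\bigl(i_{*}\mathbb{Z}/p^{r}(n-1)[-1]\bigr) = i_{*}\operatorname{W}\Omega^{n-1}_{r,\log}
\]
induced by the given morphism of complexes. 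By \cite[p.131, Theorem (5.12)]{B-K} and the Bloch--Kato--Gabber isomorphism this stalk map takes the form $\phi \colon \KM_{n}(K_{\bar{\mathfrak{m}}})/p^{r} \to \KM_{n-1}(k_{s})/p^{r}$, so that everything is reduced to showing that $\phi$ coincides with the residue (symbol) map of Milnor $K$-theory.

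The main obstacle is precisely this last identification. The point is that $\phi$ is the degree-$n$ component of the connecting morphism of the localization triangle of Proposition~\ref{tru2} formed over the strict henselization $\mathcal{O}_{K_{\bar{\mathfrak{m}}}}$ of $A$, which is again a discrete valuation ring, now with separably closed residue field $k_{s}$ and fraction field $K_{\bar{\mathfrak{m}}}$. Since the formation of $\mathbf{R}i^{!}$, of the truncations, and of the localization triangle all commute with passage to the stalk at $\bar{\mathfrak{m}}$, i.e.\ with the base change $A \to \mathcal{O}_{K_{\bar{\mathfrak{m}}}}$, the map $\phi$ agrees with the connecting homomorphism $\HO^{n}_{\et}(K_{\bar{\mathfrak{m}}}, \mathbb{Z}/p^{r}(n)) \to \HO^{n-1}_{\et}(k_{s}, \mathbb{Z}/p^{r}(n-1))$ analysed in the proof of Proposition~\ref{supp}. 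By \cite[p.774, Theorem 1.2.2]{Ge} and \cite[Lemma 3.2]{Ge-L}, applied over $K_{\bar{\mathfrak{m}}}$, this homomorphism is the symbol map $\KM_{n}(K_{\bar{\mathfrak{m}}})/p^{r} \to \KM_{n-1}(k_{s})/p^{r}$; hence so is $\phi$. Applying $\HO^{1}(\operatorname{Gal}(k_{s}/k), -)$ then produces exactly the homomorphism~(\ref{Ka}), and the statement follows. The delicate verification is that taking the stalk at $\bar{\mathfrak{m}}$ genuinely realizes the geometric situation of Proposition~\ref{supp}, that is, that the purity construction is compatible with the base change $A \to \mathcal{O}_{K_{\bar{\mathfrak{m}}}}$; once this functoriality is granted, the identification of $\phi$ with the tame symbol is immediate from Geisser--Levine.
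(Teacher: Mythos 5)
Your proposal is correct and follows essentially the same route as the paper: both reduce the statement, via the naturality of the hypercohomology spectral sequences over the henselian base, to identifying the stalk at $\bar{\mathfrak{m}}$ of the connecting map on $\mathcal{H}^{n}$ with the Milnor $K$-theory symbol map, which in both cases rests on \cite[p.774, Theorem 1.2.2]{Ge} and \cite[Lemma 3.2]{Ge-L}. The only cosmetic difference is that the paper records this identification through an explicit comparison diagram with the Zariski-topology localization sequence (and a diagram involving $\mathbf{R}i^{!}$ and local cohomology), whereas you phrase it as compatibility with base change to the strict henselization; the underlying content is the same.
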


\begin{proof}\upshape
We have the commutative diagram
\footnotesize
\begin{equation}
\begin{CD}
@. 
\operatorname{H}^{n+1}_{\et}
\left(
K, \mathbb{Z}/p^{r}(n)
\right)
\\
@. @|
\\
\operatorname{H}^{1}_{\et}
\left(
\spec A, 
\mathbf{R}^{n}j_{*}\mathbb{Z}/p^{r}(n)
\right)
@>>>
\operatorname{H}^{n+1}_{\et}
\left(
\spec A, 
\mathbf{R}j_{*}\mathbb{Z}/p^{r}(n)
\right)
\\
@VVV @VVV \\
\operatorname{H}^{1}_{\et}
\left(
\spec A, 
i_{*}\mathbf{R}^{n+1}i^{!}\mathbb{Z}/p^{r}(n)
\right)
@>>>
\operatorname{H}^{n+1}
_{\et}
\left(
\spec A, 
i_{*}\mathbf{R}i^{!}\mathbb{Z}/p^{r}(n)[+1]
\right)\\
@. @| \\
@.
\operatorname{H}^{n+2}_{\mathfrak{m}}
\left(
\spec A, 
\mathbb{Z}/p^{r}(n)
\right)
\end{CD}
\end{equation}
\normalsize
where the horizontal maps are given by spectral sequences

\begin{equation*}
\operatorname{H}^{l}_{\et}
\left(
\spec A, 
\mathbf{R}^{m}j_{*}\mathbb{Z}/p^{r}(n)
\right)
\Rightarrow
\operatorname{H}^{l+m}_{\et}
\left(
K, 
\mathbb{Z}/p^{r}(n)
\right)
\end{equation*}
and

\begin{equation*}
\operatorname{H}^{l}_{\mathrm{\acute{e}t}}
\left(
\spec A, 
i_{*}\mathbf{R}^{m}i^{!}\mathbb{Z}/p^{r}(n-1)
\right)
\Rightarrow
\operatorname{H}^{l+m}_{\mathfrak{m}}
\left(
\spec A, \mathbb{Z}/p^{r}(n)
\right).
\end{equation*}
%


Moreover, we have the commutative diagram
\begin{equation*}
\begin{CD}
\epsilon^{*}i^{*}\mathbf{R}j_{*}\mathbb{Z}/p^{r}(n)_{\mathrm{Zar}}
@>>>
\epsilon^{*}\mathbb{Z}/p^{r}(n-1)_{\mathrm{Zar}}[-1] \\
@VVV @VVV  \\
i^{*}\tau_{\leq n}
\left(
\mathbf{R}j_{*}\mathbb{Z}/p^{r}(n)_{\mathrm{\acute{e}t}}
\right)
@>>>
\tau_{\leq n+1}
\left(
\mathbf{R}i^{!}\mathbb{Z}/p^{r}(n)_{\mathrm{\acute{e}t}}
\right)
[+1]
\end{CD}
\end{equation*}
where the vertical maps are quasi-isomorphisms and the 
homomorphism
\begin{equation*}
\left(
\epsilon^{*}i^{*}\mathcal{H}^{n}
\left(
\mathbf{R}j_{*}\mathbb{Z}/p^{r}(n)_{\mathrm{Zar}}
\right)
\right)_{\bar{\mathfrak{m}}}
\to
\mathcal{H}^{n-1}\left(
\epsilon^{*}\mathbb{Z}/p^{r}(n-1)_{\mathrm{Zar}}
\right)_{\bar{\mathfrak{m}}}
\end{equation*}
agrees with the symbol map
\begin{math}
\mathrm{K}_{n}^{\mathrm{M}}
(
K_{\bar{\mathfrak{m}}}
)/p^{r}
\to
\mathrm{K}_{n-1}^{\mathrm{M}}
(
k_{s})
/p^{r}
\end{math}
by \cite[Lemma 3.2]{Ge-L}.
Therefore the statement follows.
\end{proof}

\begin{rem}\upshape
Let $A$ be a discrete valuation ring with the fraction field $K$ and
the residue field $k$. Suppose that 
$\operatorname{char}(k)=p>0$.

In the case where $A$ is henselian and
$[k: k^{p}]\leq n-1$, 
the homomorphism (\ref{Ka}) is defined by K.Kato (\cite[p.150, \S1, (1.3) (ii)]{K}).

In general, the homomorphism
\begin{equation*}
\HO^{n+1}_{\et}
\left(
K,
\mathbb{Z}/p^{r}(n)
\right)^{\prime}
\to
\HO^{n}_{\et}
\left(
k,
\mathbb{Z}/p^{r}(n-1)
\right)
\end{equation*}
in the exact sequence (\ref{dim1ex}) is 
the composition of homomorphisms
\begin{equation*}
\HO^{n+1}_{\et}
\left(
K,
\mathbb{Z}/p^{r}(n)
\right)^{\prime}
\to
\HO^{n+1}_{\et}
\left(
\tilde{K}_{\mathfrak{m}},
\mathbb{Z}/p^{r}(n)
\right)^{\prime}
\end{equation*} 
and (\ref{Ka})
where 
$\tilde{K}_{\mathfrak{m}}$ 
is the henselization of $K$.
\end{rem}
\section{Definition of 
a generalized Brauer group
}\label{defgb}

In this section, we define a generalization of the Brauer group.

\vspace{2.0mm}

Let 
\begin{math}
\epsilon: X_{\et} \to X_{\Zar}
\end{math}
be the canonical map of sites. Then we define  a generalization
of the Brauer group as follows.
\begin{defi}\upshape\label{DefGB}
Let $X$ be an equi-dimensional scheme. Then we define $\operatorname{H}^{n}_{\operatorname{B}}(X)$ as
\begin{equation*}
\operatorname{H}^{n}_{\operatorname{B}}(X)
=
\Gamma\left(
X,
\mathbf{R}^{n+1}\epsilon_{*}\mathbb{Z}
\left(
n-1
\right)_{\et}
\right).
\end{equation*}
%
%
%
%
%
%
\end{defi}
\begin{rem}\upshape
Let $X$ be an essentially
smooth scheme over a Dedekind domain
and $m$ a positive integer.
Since
\begin{equation*}
\mathbf{R}^{n}\epsilon_{*}(n-1)_{\et} 
\simeq
\mathcal{H}^{n}(\mathbb{Z}(n-1)_{\Zar})
=
0
\end{equation*}
by \cite[Theorem 1.2.2 and Corollary 4.4]{Ge} and \cite{V2}, 
we have
\begin{align*}
\HB^{n}(X)_{m}
\overset{\mathrm{def}}{=}&
\operatorname{Ker}
\left(
\HB^{n}(X)
\xrightarrow{\times m}
\HB^{n}(X)
\right)\\
=&
\Gamma
\left(
X,
\mathbf{R}^{n}\epsilon_{*}\mathbb{Z}/m(n-1)_{\et}
\right)
\end{align*}
by a distinguished triangle
\begin{align*}
\cdots
\to
\mathbf{R}\epsilon_{*}\mathbb{Z}(n-1)_{\et}
\xrightarrow{\times m}
\mathbf{R}\epsilon_{*}\mathbb{Z}(n-1)_{\et}
\to
\mathbf{R}\epsilon_{*}\mathbb{Z}/m(n-1)_{\et}
\to
\cdots.
\end{align*}
Therefore this cohomology group 
relates to Kato homology (cf. \cite[p.160, (5.3)]{KS}).
\end{rem}

For the following reason we can regard $\operatorname{H}^{n}_{\operatorname{B}}$ as a generalization of the 
Brauer group.
\begin{prop}\upshape\label{reason}
Let $X$ be an essentially smooth scheme 
over 
the spectrum of
a Dedekind
domain. Then
\begin{align*}
\operatorname{H}^{1}_{\operatorname{B}}(X)
=
\operatorname{H}^{1}_{\et}
\left(
X, \mathbb{Q}/\mathbb{Z}
\right)
~~\textrm{and}~~
\operatorname{H}^{2}_{\operatorname{B}}(X)
=
\operatorname{Br}(X)
\end{align*}
where $\operatorname{Br}(X)$ is the cohomological
Brauer group
$\operatorname{H}^{2}_{\et}
\left(X, \mathbb{G}_{m}\right)$.
\end{prop}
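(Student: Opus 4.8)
The plan is to evaluate the two groups using the known identifications of the lowest-weight motivic complexes, namely the quasi-isomorphisms $\mathbb{Z}(0)_{\et}\simeq\mathbb{Z}$ (the constant sheaf in degree $0$) and $\mathbb{Z}(1)_{\et}\simeq\mathbb{G}_m[-1]$, and then to read off the answer from the Leray spectral sequence
\begin{equation*}
E_2^{p,q}=\HO^{p}_{\Zar}\!\left(X,\mathbf{R}^{q}\epsilon_{*}F\right)\Rightarrow\HO^{p+q}_{\et}(X,F)
\end{equation*}
for $\epsilon\colon X_{\et}\to X_{\Zar}$. Since $X$ is essentially smooth over a Dedekind domain it is regular, hence locally integral, and as both $\Gamma$ and the relevant pushforwards split over the (finitely many) connected components I may assume $X$ integral with function field $k(X)$. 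With these reductions the definition gives $\HB^{1}(X)=\Gamma(X,\mathbf{R}^{2}\epsilon_{*}\mathbb{Z})$ and $\HB^{2}(X)=\Gamma(X,\mathbf{R}^{2}\epsilon_{*}\mathbb{G}_m)$, so the task becomes to identify these global sections of higher pushforwards with full \'etale cohomology groups.

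For the Brauer case ($n=2$) I would proceed as follows. The stalk of $\mathbf{R}^{1}\epsilon_{*}\mathbb{G}_m$ at $x$ is $\operatorname{Pic}(\mathcal{O}_{X,x})=0$, so $\mathbf{R}^{1}\epsilon_{*}\mathbb{G}_m=0$; this is the sheaf-theoretic form of Hilbert~90. Next I would establish $\HO^{2}_{\Zar}(X,\mathbb{G}_m)=0$ from the flasque divisor resolution
\begin{equation*}
0\to\mathcal{O}_{X}^{*}\to\mathcal{M}_{X}^{*}\to\bigoplus_{x\in X^{(1)}}(i_{x})_{*}\mathbb{Z}\to 0,
\end{equation*}
which is exact because a regular scheme is locally factorial and whose two right-hand terms are flasque on the Noetherian integral scheme $X$. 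Feeding $\mathbf{R}^{1}\epsilon_{*}\mathbb{G}_m=0$ and $E_2^{2,0}=\HO^{2}_{\Zar}(X,\mathbb{G}_m)=0$ into the spectral sequence kills every term of total degree $2$ except $E_2^{0,2}$ (and no differential can enter or leave it), so the edge map gives $\Gamma(X,\mathbf{R}^{2}\epsilon_{*}\mathbb{G}_m)\xrightarrow{\sim}\HO^{2}_{\et}(X,\mathbb{G}_m)=\operatorname{Br}(X)$.

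For the case $n=1$ I would first pass from $\mathbb{Z}$ to $\mathbb{Q}/\mathbb{Z}$. From the short exact sequence $0\to\mathbb{Z}\to\mathbb{Q}\to\mathbb{Q}/\mathbb{Z}\to 0$ of \'etale sheaves together with the vanishing $\mathbf{R}^{q}\epsilon_{*}\mathbb{Q}=0$ for $q\geq 1$ (the stalk $\HO^{q}_{\et}(\mathcal{O}_{X,x},\mathbb{Q})=\HO^{q}_{\et}(\mathcal{O}_{X,x},\mathbb{Z})\otimes\mathbb{Q}$ vanishes, as positive-degree \'etale cohomology of the constant sheaf $\mathbb{Z}$ is torsion), the connecting homomorphism yields $\mathbf{R}^{1}\epsilon_{*}\mathbb{Q}/\mathbb{Z}\xrightarrow{\sim}\mathbf{R}^{2}\epsilon_{*}\mathbb{Z}$. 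It then remains to compute $\Gamma(X,\mathbf{R}^{1}\epsilon_{*}\mathbb{Q}/\mathbb{Z})$ by the Leray spectral sequence for $\mathbb{Q}/\mathbb{Z}$: here $\epsilon_{*}\mathbb{Q}/\mathbb{Z}$ is the constant Zariski sheaf $\mathbb{Q}/\mathbb{Z}$, which is flasque on the irreducible space $X$, so $E_2^{p,0}=\HO^{p}_{\Zar}(X,\mathbb{Q}/\mathbb{Z})=0$ for $p\geq 1$ and the five-term exact sequence collapses to $\HO^{1}_{\et}(X,\mathbb{Q}/\mathbb{Z})\xrightarrow{\sim}\Gamma(X,\mathbf{R}^{1}\epsilon_{*}\mathbb{Q}/\mathbb{Z})$. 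Composing the two isomorphisms gives $\HB^{1}(X)=\HO^{1}_{\et}(X,\mathbb{Q}/\mathbb{Z})$.

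The routine part is the bookkeeping of the two spectral sequences, which only uses edge maps together with two vanishing inputs. The genuine content, and the step I expect to be the main obstacle, is the vanishing $\HO^{2}_{\Zar}(X,\mathbb{G}_m)=0$: this is exactly where regularity enters (through local factoriality, hence exactness of the divisor sequence), and it relies on the reduction to an integral base and on the Noetherian hypothesis to guarantee that the infinite direct sum $\bigoplus_{x\in X^{(1)}}(i_{x})_{*}\mathbb{Z}$ is still flasque. This is precisely the classical Grothendieck mechanism underlying $\operatorname{Br}(X)\hookrightarrow\operatorname{Br}(k(X))$ for regular $X$, and the cleanest route is to invoke it rather than re-derive it.
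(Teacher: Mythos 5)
Your argument is correct, but it follows a genuinely different route from the paper's. For $\HB^{2}(X)=\operatorname{Br}(X)$ the paper does not run the Leray spectral sequence at all: after invoking $\mathbb{Z}(1)_{\et}\simeq\mathbb{G}_m[-1]$ it identifies \emph{both} sides with the intersection $\bigcap_{x\in X_{(0)}}\operatorname{H}^{3}_{\et}\left(\spec\mathcal{O}_{X,x},\mathbb{Z}(1)\right)$ over the closed points --- the inclusion $\Gamma\left(X,\mathbf{R}^{3}\epsilon_{*}\mathbb{Z}(1)\right)\subset\bigcap_{x}\operatorname{H}^{3}_{\et}\left(\spec\mathcal{O}_{X,x},\mathbb{Z}(1)\right)$ comes from injectivity of a Zariski sheaf into the product of its stalks at closed points, and the equality $\operatorname{H}^{3}_{\et}\left(X,\mathbb{Z}(1)\right)=\bigcap_{x}\operatorname{H}^{3}_{\et}\left(\spec\mathcal{O}_{X,x},\mathbb{Z}(1)\right)$ is quoted from the author's earlier work [SakD, Remark 7.18]; the $n=1$ case is then dismissed with ``as above.'' Your proof replaces that external citation by the classical Grothendieck mechanism: $\mathbf{R}^{1}\epsilon_{*}\mathbb{G}_m=0$ plus $\operatorname{H}^{p}_{\Zar}(X,\mathbb{G}_m)=0$ for $p\geq 2$ via the flasque divisor resolution, which makes the argument self-contained and also gives a genuinely worked-out $n=1$ case through the sheaf isomorphism $\mathbf{R}^{1}\epsilon_{*}\mathbb{Q}/\mathbb{Z}\xrightarrow{\sim}\mathbf{R}^{2}\epsilon_{*}\mathbb{Z}$. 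What the paper's route buys is brevity (given [SakD]) and consistency with the ``intersection over local rings'' theme it reuses later (Corollary \ref{intmax}); what yours buys is independence from [SakD] and an explicit treatment of $\HB^{1}$. Two small points of hygiene: when you discard the differentials out of $E_2^{0,2}$ you implicitly also need $E_2^{3,0}=\operatorname{H}^{3}_{\Zar}(X,\mathbb{G}_m)=0$, which your flasque resolution does give but which you should state; and your justification of $\mathbf{R}^{q}\epsilon_{*}\mathbb{Q}=0$ for $q\geq 1$ (``positive-degree \'etale cohomology of $\mathbb{Z}$ is torsion'') is somewhat circular as phrased --- it is cleaner to cite the quasi-isomorphism $\mathbb{Q}(n)_{\Zar}\xrightarrow{\sim}\mathbf{R}\epsilon_{*}\mathbb{Q}(n)_{\et}$ of \cite[p.781, Proposition 3.6]{Ge} with $n=0$, exactly as the paper does elsewhere for the same vanishing.
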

\begin{proof}\upshape
We prove
\begin{equation}\label{H2Br}
\operatorname{H}^{2}_{\operatorname{B}}(X)
=
\operatorname{Br}(X).
\end{equation}

Since $X$ is a smooth scheme of finite type over 
the spectrum of a 
Dedekind domain, there is a quasi-isomorphism
\begin{equation*}
\mathbb{Z}(1)_{\et}
\simeq 
\mathbb{G}_{m}[-1]
\end{equation*}
by \cite[pp.196--197]{Ge2}
and we have the morphism
\begin{equation}\label{brtr}
\operatorname{H}^{3}_{\et}
\left(
X, \mathbb{Z}(1)
\right)
\to
\Gamma
\left(
X, \mathbf{R}^{3}\epsilon_{*}\mathbb{Z}(1)_{\et}
\right)
\end{equation}
which is induced by the morphism
\begin{equation*}
\tau_{\leq 3}
\left(
\mathbf{R}\epsilon_{*}\mathbb{Z}(1)_{\et}
\right)
\to
\mathbf{R}^{3}\epsilon_{*}\mathbb{Z}(1)_{\et}.
\end{equation*}
Let $x\in X_{(0)}$ and $i_{x}: x\to X$ the closed immersion. Then the morphism
\begin{equation*}
\Gamma
\left(
X,
\mathbf{R}^{3}\epsilon_{*}\mathbb{Z}(1)_{\et}
\right)
\to
\prod_{x\in X_{(0)}}\Gamma
\left(
x, (i_{x})^{*}
\mathbf{R}^{3}\epsilon_{*}\mathbb{Z}(1)_{\et}
\right)
\end{equation*}
is injective and
\begin{equation*}
\Gamma
\left(
x, (i_{x})^{*}
\mathbf{R}^{3}\epsilon_{*}\mathbb{Z}(1)_{\et}
\right)
=
\operatorname{H}^{3}_{\et}
\left(
\spec\mathcal{O}_{X, x}, \mathbb{Z}(1)
\right)
\end{equation*}
by \cite[p.88, III, Proposition 1.13]{M}.
Hence 
\begin{equation*}
\Gamma
\left(
X,
\mathbf{R}^{3}\epsilon_{*}\mathbb{Z}(1)_{\et}
\right)
\subset
\bigcap_{x\in X_{(0)}}
\operatorname{H}^{3}_{\et}
\left(
\spec\mathcal{O}_{X, x}, \mathbb{Z}(1)
\right).
\end{equation*}
On the other hand,
\begin{equation*}
\operatorname{H}^{3}_{\et}
\left(
X, \mathbb{Z}(1)
\right)
=
\bigcap_{x\in X_{(0)}}
\operatorname{H}^{3}_{\et}
\left(
\spec\mathcal{O}_{X, x}, \mathbb{Z}(1)
\right)
\end{equation*}
by \cite[Remark 7.18]{SakD}.
Therefore we have an injective homomorphism
\begin{equation}\label{inv}
\Gamma\left(
X, 
\mathbf{R}^{3}\epsilon_{*}\mathbb{Z}(1)_{\et}
\right)    
\to
\operatorname{H}^{3}_{\et}
\left(
X, \mathbb{Z}(1)
\right)
\end{equation}
%
%
such that
\begin{equation*}
\operatorname{H}^{3}_{\et}
\left(
X, \mathbb{Z}(1)
\right)
\xrightarrow{(\ref{brtr})}
\Gamma\left(
X, 
\mathbf{R}^{3}\epsilon_{*}\mathbb{Z}(1)_{\et}
\right) 
\xrightarrow{(\ref{inv})} 
\operatorname{H}^{3}_{\et}
\left(
X, \mathbb{Z}(1)
\right)
=
\operatorname{id}
\end{equation*}
and the morphism (\ref{brtr}) is an isomorphism.
Hence the equation (\ref{H2Br}) follows.

Moreover we can show that
\begin{equation*}
\operatorname{H}^{1}_{\operatorname{B}}
\left(
X
\right)
=
\operatorname{H}^{1}_{\et}
\left(
X, \mathbb{Q}/\mathbb{Z}
\right)
\end{equation*}
as above. Therefore the statement follows.
\end{proof} 
In the following case, $\operatorname{H}^{n}_{\mathrm{B}}$ is expressed by \'{e}tale motivic cohomology.
\begin{prop}\upshape
\label{CaseGBEt}
Let $X$ be an essentially smooth scheme over 
the spectrum of
a Dedekind domain and
\begin{math}
\operatorname{H}^{i}_{\Zar}
\left(
X, \mathbb{Z}(n-1)
\right)
=
0
\end{math}
for $i\geq n+1$. 

Then
\begin{equation}\label{GeBrEteq}
\operatorname{H}^{n}_{\operatorname{B}}(X)
=
\operatorname{H}^{n+1}_{\et}
\left(
X, \mathbb{Z}(n-1)
\right).
\end{equation}
Especially,
if $A$ is a local ring of smooth algebra over a Dedekind domain
and $X=\spec A$, 
then the equation (\ref{GeBrEteq}) holds and
\begin{align}
\HB^{n}(X)_{m}
\overset{\mathrm{def}}{=}&
\operatorname{Ker}
\left(
\HB^{n}(X)
\xrightarrow{\times m}
\HB^{n}(X)
\right)
   \nonumber \\
\label{GBTm}
=&
\HO^{n}_{\et}
\left(
X,
\mathbb{Z}/m(n-1)
\right)
\end{align}
for any positive integer $m$.
\end{prop}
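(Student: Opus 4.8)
The plan is to derive both assertions from the Beilinson--Lichtenbaum comparison between the Zariski and \'{e}tale motivic complexes, in exactly the form already invoked in (\ref{Z-ej}): for weight $n-1$ the canonical map is a quasi-isomorphism
\[
\mathbb{Z}(n-1)_{\Zar}
\xrightarrow{\sim}
\tau_{\leq n}\mathbf{R}\epsilon_{*}\mathbb{Z}(n-1)_{\et}
\]
by (\cite[p.774, Theorem 1.2.2]{Ge}, \cite{V2}). The morphism realizing (\ref{GeBrEteq}) should be the one induced by the truncation $\tau_{\leq n+1}\mathbf{R}\epsilon_{*}\mathbb{Z}(n-1)\to \mathbf{R}^{n+1}\epsilon_{*}\mathbb{Z}(n-1)[-n-1]$, exactly as in the proof of Proposition \ref{reason}. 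Granting the comparison above, since truncating above degree $n+1$ does not alter $\HO^{n+1}_{\Zar}$, I would first record
\[
\HO^{n+1}_{\et}\left(X,\mathbb{Z}(n-1)\right)
=
\HO^{n+1}_{\Zar}\left(X,\mathbf{R}\epsilon_{*}\mathbb{Z}(n-1)\right)
=
\HO^{n+1}_{\Zar}\left(X,\tau_{\leq n+1}\mathbf{R}\epsilon_{*}\mathbb{Z}(n-1)\right).
\]

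Next I would exploit the canonical distinguished triangle relating two consecutive truncations,
\[
\tau_{\leq n}\mathbf{R}\epsilon_{*}\mathbb{Z}(n-1)
\to
\tau_{\leq n+1}\mathbf{R}\epsilon_{*}\mathbb{Z}(n-1)
\to
\mathbf{R}^{n+1}\epsilon_{*}\mathbb{Z}(n-1)[-n-1]
\to,
\]
and apply $\mathbf{R}\Gamma_{\Zar}(X,-)$. In total degree $n+1$ the two outer terms are $\HO^{n+1}_{\Zar}(X,\tau_{\leq n}\mathbf{R}\epsilon_{*}\mathbb{Z}(n-1))=\HO^{n+1}_{\Zar}(X,\mathbb{Z}(n-1))$ and $\HO^{n+2}_{\Zar}(X,\tau_{\leq n}\mathbf{R}\epsilon_{*}\mathbb{Z}(n-1))=\HO^{n+2}_{\Zar}(X,\mathbb{Z}(n-1))$, both zero by the hypothesis $\HO^{i}_{\Zar}(X,\mathbb{Z}(n-1))=0$ for $i\geq n+1$. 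Since the middle term is $\Gamma(X,\mathbf{R}^{n+1}\epsilon_{*}\mathbb{Z}(n-1))=\HB^{n}(X)$, the long exact sequence collapses to the isomorphism $\HO^{n+1}_{\et}(X,\mathbb{Z}(n-1))\xrightarrow{\sim}\HB^{n}(X)$, which is (\ref{GeBrEteq}).

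For the special case $X=\spec A$ with $A$ a local ring of a smooth algebra over a Dedekind domain, I would first note that $\HO^{i}_{\Zar}(X,\mathbb{Z}(n-1))=0$ for $i>n-1$ by \cite[p.779, Theorem 3.2 b)]{Ge} and \cite[p.786, Corollary 4.4]{Ge}; in particular the hypothesis of the first part holds, so (\ref{GeBrEteq}) is valid. The same Beilinson--Lichtenbaum comparison, now in degree $n\leq n$, gives $\HO^{n}_{\et}(X,\mathbb{Z}(n-1))=\HO^{n}_{\Zar}(X,\mathbb{Z}(n-1))=0$. Applying $\mathbf{R}\Gamma_{\et}(X,-)$ to the Bockstein triangle $\mathbb{Z}(n-1)\xrightarrow{\times m}\mathbb{Z}(n-1)\to\mathbb{Z}/m(n-1)\to$, the long exact sequence yields
\[
0
\to
\HO^{n}_{\et}\left(X,\mathbb{Z}(n-1)\right)/m
\to
\HO^{n}_{\et}\left(X,\mathbb{Z}/m(n-1)\right)
\to
\HO^{n+1}_{\et}\left(X,\mathbb{Z}(n-1)\right)_{m}
\to
0.
\]
The left-hand term vanishes by the previous line, and the right-hand term is $\HB^{n}(X)_{m}$ by (\ref{GeBrEteq}); this establishes (\ref{GBTm}).

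The only genuine subtlety is bookkeeping the truncation degrees: everything hinges on the precise level $\tau_{\leq n}$ at which the integral Beilinson--Lichtenbaum quasi-isomorphism holds for weight $n-1$, and on matching that level with the two Zariski vanishing degrees $n+1$ and $n+2$ furnished by the hypothesis. Once those degrees are correctly aligned, both parts are formal consequences of the distinguished-triangle long exact sequences together with the Bockstein sequence, so I anticipate no serious obstacle beyond this indexing.
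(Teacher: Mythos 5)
Your proposal is correct and follows essentially the same route as the paper: the Beilinson--Lichtenbaum identification $\mathbb{Z}(n-1)_{\Zar}\simeq\tau_{\leq n}\mathbf{R}\epsilon_{*}\mathbb{Z}(n-1)_{\et}$, the truncation triangle ending in $\mathbf{R}^{n+1}\epsilon_{*}\mathbb{Z}(n-1)[-(n+1)]$, and the resulting five-term exact sequence killed by the Zariski vanishing hypothesis, with the local case handled by Gersten-resolution vanishing plus the Bockstein sequence. You merely make explicit two steps the paper leaves implicit (the identification $\HO^{n+1}_{\et}=\HO^{n+1}_{\Zar}(X,\tau_{\leq n+1}\mathbf{R}\epsilon_{*}\mathbb{Z}(n-1))$ and the Bockstein argument for (\ref{GBTm})), and the degree bookkeeping is aligned correctly throughout.
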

\begin{proof}\upshape
Since the canonical map induces a quasi-isomorphism
\begin{equation}\label{BZaret}
\mathbb{Z}(n-1)_{\Zar}
\simeq
\tau_{\leq n}
\mathbf{R}\epsilon_{*}\mathbb{Z}(n-1)_{\et}
\end{equation}
(\cite[p.774, Theorem 1.2.2]{Ge}, \cite{V2}), we have a distinguished triangle
\begin{align}\label{ZarEtdis}
\cdots
\to
\mathbb{Z}(n-1)_{\Zar}
\to
\tau_{\leq n+1}\mathbf{R}\epsilon_{*}\mathbb{Z}(n-1)_{\et} \\
\to
\mathbf{R}^{n+1}\epsilon_{*}\mathbb{Z}(n-1)_{\et}[-(n+1)]
\to 
\cdots. \nonumber
\end{align}
Hence the sequence
\begin{align}\label{exZaret}
0
&\to
\operatorname{H}^{n+1}_{\Zar}
\left(
X, \mathbb{Z}(n-1)
\right)
\to
\operatorname{H}^{n+1}_{\et}
\left(
X, \mathbb{Z}(n-1)
\right)
\to
\Gamma
\left(
X, \mathbf{R}^{n+1}\epsilon_{*}\mathbb{Z}(n-1)_{\et}
\right) \nonumber \\
&\to
\operatorname{H}^{n+2}_{\Zar}
\left(
X, \mathbb{Z}(n-1)
\right)
\to
\operatorname{H}^{n+2}_{\et}
\left(
X, \mathbb{Z}(n-1)
\right)
\end{align}
is exact.
Therefore the equation (\ref{GeBrEteq}) holds.

Assume that $A$ is a local ring of a smooth algebra over a Dedekind domain
and $X=\spec A$. Then
\begin{equation*}
\HO_{\Zar}^{i}
\left(
X, \mathbb{Z}(n-1)
\right)
=0
\end{equation*}
for $i\geq n$ by \cite[p.786, Corollary 4.4]{Ge} and
\begin{equation*}
\HO^{n}_{\et}
\left(
X,
\mathbb{Z}(n-1)
\right)
=
\HO^{n}_{\Zar}
\left(
X,
\mathbb{Z}(n-1)
\right)
=0
\end{equation*}
by the equation (\ref{BZaret}). Therefore the equations
(\ref{GeBrEteq}) and (\ref{GBTm}) hold. 
This completes the proof.
\end{proof}
\begin{rem}\upshape
%
%
In general,
\begin{equation*}
\operatorname{H}^{n+1}_{\et}
\left(
X, \mathbb{Z}(n-1)
\right)_{\operatorname{tor}}
\neq
\Gamma
\left(
X, \mathbf{R}^{n+1}\epsilon_{*}\mathbb{Z}(n-1)_{\et}
\right)_{\operatorname{tor}}.
\end{equation*}

Let $K$ be a field and $l$ a positive integer. Suppose that
$\mu_{l}\subset K$. 

Then we have
\begin{equation*}
\operatorname{H}^{5}_{\Zar}(\mathbf{P}^{m}_{K}, \mathbb{Z}(3))
=
\operatorname{H}^{1}_{\Zar}(\spec K, \mathbb{Z}(1))
=
K^{*}
\end{equation*}
for an integer $m\geq 2$
by the relation
\begin{equation*}
\operatorname{H}^{i}_{\Zar}
\left(
\mathbf{P}^{m}_{K}, 
\mathbb{Z}(n)
\right)
=
\displaystyle\bigoplus^{m}_{j=0}
\operatorname{H}^{i-2j}_{\Zar}
\left(
\spec K, \mathbb{Z}(n-j)
\right).
\end{equation*}
Hence
\begin{equation*}
\operatorname{H}^{5}_{\Zar}
\left(
\mathbf{P}^{m}_{K}, \mathbb{Z}(3)
\right)_{\operatorname{tor}}
\neq
0.
\end{equation*}
Since the sequence
\begin{align*}
0
\to&
\operatorname{H}^{5}_{\Zar}
\left(
\mathbf{P}^{m}_{K}, \mathbb{Z}(3)
\right)_{\operatorname{tor}}
\to
\operatorname{H}^{5}_{\et}
\left(
\mathbf{P}^{m}_{K}, \mathbb{Z}(3)
\right)_{\operatorname{tor}}  \\
\to&
\Gamma
\left(
X, 
\mathbf{R}^{5}\epsilon_{*}\mathbb{Z}(3)_{\et}
\right)_{\operatorname{tor}}
\end{align*}
is exact,
\begin{equation*}
\operatorname{H}^{5}_{\et}
\left(
\mathbf{P}^{m}_{K}, \mathbb{Z}(3)
\right)_{\operatorname{tor}}  
\neq
\Gamma
\left(
X, 
\mathbf{R}^{5}\epsilon_{*}\mathbb{Z}(3)
\right)_{\operatorname{tor}}.
\end{equation*}
%



%
\end{rem}
\begin{prop}\upshape
Let $X$ be an essentially smooth scheme over 
the spectrum of a 
Dedekind domain. Let 
$\alpha: X_{\et}\to X_{\Nis}$
be the canonical map of sites.
Then
\begin{align*}
\HB^{n+1}
\left(
X
\right)
=
\Gamma
\left(
X,
\mathbf{R}^{n+2}\alpha_{*}
\mathbb{Z}(n)_{\et}
\right).
\end{align*}
\end{prop}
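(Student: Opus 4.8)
The plan is to imitate the proof of Proposition \ref{CaseGBEt}: I would produce, for the Nisnevich projection $\alpha$, the exact analogue of the sequence (\ref{exZaret}), and then compare it termwise with the Zariski sequence that computes $\HB^{n+1}(X)$. Write $\beta\colon X_{\Nis}\to X_{\Zar}$ for the projection, so that $\epsilon=\beta\circ\alpha$; since $\beta$ corresponds to the inclusion of the Zariski site into the Nisnevich site, one has $\Gamma(X,\beta_{*}G)=\Gamma(X,G)$ for every Nisnevich sheaf $G$. In particular $\Gamma(X,\mathbf{R}^{n+2}\alpha_{*}\mathbb{Z}(n)_{\et})$ is the group of global sections over $X$ of the Nisnevich sheafification of $U\mapsto \HO^{n+2}_{\et}(U,\mathbb{Z}(n))$, and it coincides with $\Gamma(X,\beta_{*}\mathbf{R}^{n+2}\alpha_{*}\mathbb{Z}(n)_{\et})$.

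First I would record the Nisnevich counterpart of the quasi-isomorphism (\ref{BZaret}), namely
\begin{equation*}
\mathbb{Z}(n)_{\Nis}\simeq \tau_{\leq n+1}\mathbf{R}\alpha_{*}\mathbb{Z}(n)_{\et},
\end{equation*}
which holds by the same references (\cite[p.774, Theorem 1.2.2]{Ge}, \cite{V2}). From it I would form the distinguished triangle
\begin{equation*}
\mathbb{Z}(n)_{\Nis}\to \tau_{\leq n+2}\mathbf{R}\alpha_{*}\mathbb{Z}(n)_{\et}\to \mathbf{R}^{n+2}\alpha_{*}\mathbb{Z}(n)_{\et}[-(n+2)]\to
\end{equation*}
exactly as in (\ref{ZarEtdis}). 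Since truncation at $n+2$ leaves Nisnevich hypercohomology unchanged in degrees $\leq n+2$ and induces an injection in degree $n+3$, taking Nisnevich hypercohomology yields the exact sequence
\begin{align*}
0\to \HO^{n+2}_{\Nis}(X,\mathbb{Z}(n))\to \HO^{n+2}_{\et}(X,\mathbb{Z}(n))\to \Gamma(X,\mathbf{R}^{n+2}\alpha_{*}\mathbb{Z}(n))\\
\to \HO^{n+3}_{\Nis}(X,\mathbb{Z}(n))\to \HO^{n+3}_{\et}(X,\mathbb{Z}(n)),
\end{align*}
which is the precise analogue of (\ref{exZaret}) with $\Zar$ replaced by $\Nis$.

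I would then compare this with the Zariski sequence (\ref{exZaret}), applied with $n-1$ replaced by $n$, whose relevant middle term is $\HB^{n+1}(X)=\Gamma(X,\mathbf{R}^{n+2}\epsilon_{*}\mathbb{Z}(n))$. The projection $\beta$ induces a morphism of the two five-term sequences: it is the identity on the \'{e}tale terms $\HO^{i}_{\et}(X,\mathbb{Z}(n))$, the canonical comparison $\HO^{i}_{\Zar}(X,\mathbb{Z}(n))\to \HO^{i}_{\Nis}(X,\mathbb{Z}(n))$ on the motivic terms, and in the middle the edge map $\Gamma(X,\mathbf{R}^{n+2}\epsilon_{*}\mathbb{Z}(n))\to \Gamma(X,\beta_{*}\mathbf{R}^{n+2}\alpha_{*}\mathbb{Z}(n))=\Gamma(X,\mathbf{R}^{n+2}\alpha_{*}\mathbb{Z}(n))$. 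Because Zariski and Nisnevich motivic cohomology of an essentially smooth scheme over a Dedekind domain agree (\cite{Ge}), the two outer motivic comparison maps are isomorphisms, while the \'{e}tale maps are identities; the five lemma then forces the middle map $\HB^{n+1}(X)\to \Gamma(X,\mathbf{R}^{n+2}\alpha_{*}\mathbb{Z}(n)_{\et})$ to be an isomorphism, which is the assertion.

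The main obstacle is the commutativity of this ladder, i.e.\ the naturality of the whole construction with respect to $\beta$: one must check that the Beilinson--Lichtenbaum quasi-isomorphisms, the truncation triangles, and the edge maps are compatible under $\epsilon=\beta\circ\alpha$. Formally this is forced by $\mathbf{R}\epsilon_{*}=\mathbf{R}\beta_{*}\mathbf{R}\alpha_{*}$ together with $\mathbf{R}\Gamma(X_{\Zar},\mathbf{R}\beta_{*}(-))=\mathbf{R}\Gamma(X_{\Nis},-)$, so that the Zariski sequence is the image of the Nisnevich one under $\mathbf{R}\beta_{*}$, and the only genuine input beyond this bookkeeping is the coincidence of Zariski and Nisnevich motivic cohomology. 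Equivalently, one could argue directly from the Grothendieck spectral sequence $\mathbf{R}^{p}\beta_{*}\mathbf{R}^{q}\alpha_{*}\mathbb{Z}(n)\Rightarrow \mathbf{R}^{p+q}\epsilon_{*}\mathbb{Z}(n)$: the vanishing $\mathbf{R}^{p}\beta_{*}(\mathbf{R}^{q}\alpha_{*}\mathbb{Z}(n))=0$ for $p>0$, which is the Gersten/Bloch--Ogus property of these motivic sheaves, gives $\mathbf{R}^{n+2}\epsilon_{*}\mathbb{Z}(n)=\beta_{*}\mathbf{R}^{n+2}\alpha_{*}\mathbb{Z}(n)$ directly, whence the claim on taking global sections over $X$.
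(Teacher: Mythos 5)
Your proposal is correct and follows essentially the same route as the paper: transport the Beilinson--Lichtenbaum quasi-isomorphism to the Nisnevich site via the exact pullback $\beta^{*}$, form the resulting five-term exact sequence, compare it with the Zariski sequence (\ref{exZaret}), and conclude by the five lemma using the agreement of Zariski and Nisnevich motivic cohomology (\cite[p.781, Proposition 3.6]{Ge}). The extra remarks on the naturality of the ladder and the alternative argument via the Leray spectral sequence for $\epsilon=\beta\circ\alpha$ are sound but not needed beyond what the paper already uses.
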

\begin{proof}\upshape
Let 
$\beta: X_{\Nis} \to X_{\Zar}$ be the canonical map of sites.
Since $\beta^{*}$ is exact and
\begin{equation*}
\beta^{*}\mathbb{Z}(n-1)_{\Zar}
=
\mathbb{Z}(n-1)_{\Nis},
\end{equation*}
we have a quasi-isomorphism
\begin{equation*}
\mathbb{Z}(n-1)_{\Nis}
\simeq
\tau_{\leq n}
\mathbf{R}\alpha_{*}\mathbb{Z}(n-1)_{\et}
\end{equation*}
by (\cite[p.774, Theorem 1.2.2]{Ge}, \cite{V2}) and the sequence
\begin{align*}
0
&\to
\operatorname{H}^{n+1}_{\Nis}
\left(
X, \mathbb{Z}(n-1)
\right)
\to
\operatorname{H}^{n+1}_{\et}
\left(
X, \mathbb{Z}(n-1)
\right)
\to
\Gamma
\left(
X, \mathbf{R}^{n+1}\alpha_{*}\mathbb{Z}(n-1)_{\et}
\right) \nonumber \\
&\to
\operatorname{H}^{n+2}_{\Nis}
\left(
X, \mathbb{Z}(n-1)
\right)
\to
\operatorname{H}^{n+2}_{\et}
\left(
X, \mathbb{Z}(n-1)
\right)
\end{align*}
is exact. Moreover the sequence (\ref{exZaret}) is exact and
\begin{equation*}
\HO^{i}_{\Zar}
\left(
X,
\mathbb{Z}(n-1)
\right)
=
\HO^{i}_{\Nis}
\left(
X,
\mathbb{Z}(n-1)
\right)
\end{equation*}
for any $i$ by \cite[p.781, Proposition 3.6]{Ge}.
Therefore the statement follows from the five lemma.
\end{proof}
\begin{prop}\upshape
Let $X$ be an essentially smooth scheme over 
the spectrum of a 
Dedekind domain. Then
\begin{align}
\HB^{n+1}
\left(
X
\right)
&=
\Gamma
\left(
X,
\mathbf{R}^{n+1}\epsilon_{*}
\mathbb{Q/Z}(n)_{\et} 
\right)  \label{eptor}
\\
&=
\Gamma
\left(
X,
\mathbf{R}^{n+1}\alpha_{*}
\mathbb{Q/Z}(n)_{\et}
\right). \label{altor}
\end{align}
\end{prop}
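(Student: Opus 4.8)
The plan is to compare $\mathbb{Z}(n)$--coefficients with $\mathbb{Q}/\mathbb{Z}(n)$--coefficients by means of the coefficient triangle attached to $0\to\mathbb{Z}\to\mathbb{Q}\to\mathbb{Q}/\mathbb{Z}\to 0$. By Definition \ref{DefGB} one has $\HB^{n+1}(X)=\Gamma\left(X,\mathbf{R}^{n+2}\epsilon_{*}\mathbb{Z}(n)_{\et}\right)$, and by the preceding proposition also $\HB^{n+1}(X)=\Gamma\left(X,\mathbf{R}^{n+2}\alpha_{*}\mathbb{Z}(n)_{\et}\right)$. Hence it suffices, for each of the two sites, to produce an isomorphism of sheaves $\mathbf{R}^{n+1}\epsilon_{*}\mathbb{Q}/\mathbb{Z}(n)_{\et}\xrightarrow{\sim}\mathbf{R}^{n+2}\epsilon_{*}\mathbb{Z}(n)_{\et}$ (and its $\alpha$--analogue) and then apply $\Gamma(X,-)$.

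Applying $\mathbf{R}\epsilon_{*}$ to the distinguished triangle $\mathbb{Z}(n)_{\et}\to\mathbb{Q}(n)_{\et}\to\mathbb{Q}/\mathbb{Z}(n)_{\et}\to\mathbb{Z}(n)_{\et}[1]$ and passing to cohomology sheaves yields the exact sequence
\[
\mathbf{R}^{n+1}\epsilon_{*}\mathbb{Q}(n)\to\mathbf{R}^{n+1}\epsilon_{*}\mathbb{Q}/\mathbb{Z}(n)\to\mathbf{R}^{n+2}\epsilon_{*}\mathbb{Z}(n)\to\mathbf{R}^{n+2}\epsilon_{*}\mathbb{Q}(n).
\]
I would then show that the two outer terms vanish. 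The sheaf $\mathbf{R}^{i}\epsilon_{*}\mathbb{Q}(n)_{\et}$ is the Zariski sheafification of $U\mapsto\HO^{i}_{\et}\left(U,\mathbb{Q}(n)\right)$, so its stalk at $x\in X$ is $\HO^{i}_{\et}\left(\spec\mathcal{O}_{X,x},\mathbb{Q}(n)\right)$ by continuity of \'etale cohomology. Since $X$ is essentially smooth over a Dedekind domain, $\mathcal{O}_{X,x}$ is a local ring of a smooth algebra over a Dedekind domain, and the vanishing $\HO^{i}_{\et}\left(\mathcal{O}_{X,x},\mathbb{Q}(n)\right)=0$ for $i>n$ recorded in the introduction forces $\mathbf{R}^{n+1}\epsilon_{*}\mathbb{Q}(n)=\mathbf{R}^{n+2}\epsilon_{*}\mathbb{Q}(n)=0$. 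The middle arrow is therefore an isomorphism $\mathbf{R}^{n+1}\epsilon_{*}\mathbb{Q}/\mathbb{Z}(n)\xrightarrow{\sim}\mathbf{R}^{n+2}\epsilon_{*}\mathbb{Z}(n)$, and applying $\Gamma(X,-)$ gives (\ref{eptor}).

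For (\ref{altor}) I would run the identical argument with $\alpha$ in place of $\epsilon$, taking the Nisnevich expression $\HB^{n+1}(X)=\Gamma\left(X,\mathbf{R}^{n+2}\alpha_{*}\mathbb{Z}(n)\right)$ of the preceding proposition as the starting point; the only input that changes is that the stalks of $\mathbf{R}^{i}\alpha_{*}\mathbb{Q}(n)_{\et}$ are computed at the henselizations $\mathcal{O}^{h}_{X,x}$, which are filtered colimits of local rings of smooth algebras over the base, so that the same rational vanishing propagates through the colimit. The main obstacle is exactly this rational vanishing $\mathbf{R}^{i}\epsilon_{*}\mathbb{Q}(n)=0$ for $i>n$ (and its Nisnevich counterpart): everything else is the formal extraction of an isomorphism from a four--term exact sequence followed by taking global sections. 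The identification of the higher--direct--image stalks with \'etale motivic cohomology of the (henselized) local rings, together with the appeal to the degree bound of the introduction, is where the hypothesis that $X$ be essentially smooth over a Dedekind domain is genuinely used.
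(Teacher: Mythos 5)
Your argument is correct and follows essentially the same route as the paper: both pass to the four-term exact sequence of higher direct images coming from the coefficient triangle for $0\to\mathbb{Z}\to\mathbb{Q}\to\mathbb{Q}/\mathbb{Z}\to 0$ and kill the two rational outer terms, the paper via the quasi-isomorphism $\mathbb{Q}(n)_{\Zar}\simeq\mathbf{R}\epsilon_{*}\mathbb{Q}(n)_{\et}$ together with the local vanishing of \cite[p.786, Corollary 4.4]{Ge}, and you via the equivalent stalkwise vanishing $\HO^{i}_{\et}(\mathcal{O}_{X,x},\mathbb{Q}(n))=0$ for $i>n$. Your added remark on the henselized stalks in the Nisnevich case is a harmless elaboration of the paper's ``as above.''
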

\begin{proof}\upshape
We prove the equation (\ref{eptor}).
The sequence
\begin{align*}
\mathbf{R}^{n+1}\epsilon_{*}
\mathbb{Q}(n)_{\et}
\to
\mathbf{R}^{n+1}\epsilon_{*}
\mathbb{Q/Z}(n)_{\et}
\to
\mathbf{R}^{n+2}\epsilon_{*}
\mathbb{Z}(n)_{\et}
\to
\mathbf{R}^{n+2}\epsilon_{*}
\mathbb{Q}(n)_{\et}
\end{align*}
is exact. Thus,
the canonical map
\begin{equation*}
\mathbb{Q}(n)_{\Zar}
\xrightarrow{\sim}
\mathbf{R}\epsilon_{*}
\mathbb{Q}(n)_{\et}
\end{equation*}
is a quasi-isomorphism by \cite[p.781, Proposition 3.6]{Ge},
hence
\begin{equation*}
\mathbf{R}^{n+1}\epsilon_{*}
\mathbb{Q}(n)_{\et}
=
\mathbf{R}^{n+2}\epsilon_{*}
\mathbb{Q}(n)_{\et}
=
0
\end{equation*}
by \cite[p.786, Corollary 4.4]{Ge}. Therefore we have the equation 
(\ref{eptor}).
We can also prove the equation (\ref{altor}) as above.
\end{proof}
\section{Purity}\label{Puri}
First we show the exactness of the following sequence
in equi-characteristic cases.
\begin{prop}\upshape\label{equiGer}
Let $k$ be a field and
$\mathfrak{X}$ an essentially smooth scheme over 
$\spec k$. Suppose that $\mathfrak{X}$ is an
integral quasi-compact scheme.

Then the sequence
\begin{align}
0
\to
\HB^{n+1}
\left(
\mathfrak{X}
\right)
\to&
\operatorname{Ker}
\left(
\HO^{n+2}_{\et}
\left(
k(\mathfrak{X}),
\mathbb{Z}(n)
\right)
\to
\displaystyle
\prod_{x\in \mathfrak{X}^{(1)}}
\HO^{n+2}_{\et}
\left(
k(\mathcal{O}_{\mathfrak{X}, \bar{x}}),
\mathbb{Z}(n)
\right)
\right) \nonumber \\
\to&
\displaystyle
\bigoplus_{x\in \mathfrak{X}^{(1)}}
\HO^{n+1}_{\et}
\left(
\kappa(x),
\mathbb{Z}(n-1)
\right)  \label{equiex}
\end{align}
is exact.
\end{prop}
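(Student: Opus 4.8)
The plan is to exhibit $\HB^{n+1}(\mathfrak{X})$ as the global sections of one Zariski sheaf and to build the first two terms of its Gersten resolution. Write $\epsilon\colon\mathfrak{X}_{\et}\to\mathfrak{X}_{\Zar}$ and set $\mathcal{G}=\mathbf{R}^{n+2}\epsilon_{*}\mathbb{Z}(n)_{\et}$, so that $\HB^{n+1}(\mathfrak{X})=\Gamma(\mathfrak{X},\mathcal{G})$ by Definition \ref{DefGB}. By \cite[p.88, III, Proposition 1.13]{M} the stalk of $\mathcal{G}$ at a point $x$ is $\HO^{n+2}_{\et}(\spec\mathcal{O}_{\mathfrak{X},x},\mathbb{Z}(n))$, which equals $\HB^{n+1}(\spec\mathcal{O}_{\mathfrak{X},x})$ by Proposition \ref{CaseGBEt}; in particular the generic stalk is $\HO^{n+2}_{\et}(k(\mathfrak{X}),\mathbb{Z}(n))$, and at a codimension-one point $x$ the local ring $\mathcal{O}_{\mathfrak{X},x}$ is a discrete valuation ring to which Proposition \ref{supp} applies.

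First I would run the coniveau (Cousin) method for the theory $U\mapsto\HO^{*}_{\et}(U,\mathbb{Z}(n))$ on schemes essentially smooth over $k$. The input from geometry is the purity quasi-isomorphism $\tau_{\leq n+2}(\mathbb{Z}(n-1)_{\et}[-2])\xrightarrow{\sim}\tau_{\leq n+2}\mathbf{R}i^{!}\mathbb{Z}(n)_{\et}$ of Proposition \ref{tru2}, which identifies the codimension-one local cohomology in the relevant range with $\HO^{n+1}_{\et}(\kappa(x),\mathbb{Z}(n-1))$ and provides the residue maps. Combined with homotopy invariance and continuity for the cycle complex, the Bloch--Ogus--Gabber formalism shows that $\mathcal{G}=\mathcal{H}^{n+2}$ has a flasque resolution beginning
\begin{equation*}
0\to\mathcal{G}\to (i_{\eta})_{*}\HO^{n+2}_{\et}(k(\mathfrak{X}),\mathbb{Z}(n))\to\bigoplus_{x\in\mathfrak{X}^{(1)}}(i_{x})_{*}\HO^{n+1}_{\et}(\kappa(x),\mathbb{Z}(n-1)).
\end{equation*}
Applying the left-exact functor $\Gamma(\mathfrak{X},-)$ to this exact sequence of sheaves yields the exactness of
\begin{equation*}
0\to\HB^{n+1}(\mathfrak{X})\to\HO^{n+2}_{\et}(k(\mathfrak{X}),\mathbb{Z}(n))\to\bigoplus_{x\in\mathfrak{X}^{(1)}}\HO^{n+1}_{\et}(\kappa(x),\mathbb{Z}(n-1)),
\end{equation*}
the second arrow being the sum of the purity residues.

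It remains to replace the generic term by its unramified subgroup $\HO^{n+2}_{\et}(k(\mathfrak{X}),\mathbb{Z}(n))^{\prime}$. For this I would use Proposition \ref{supp} at each codimension-one point $x$: for the discrete valuation ring $A=\mathcal{O}_{\mathfrak{X},x}$ the stalk $\mathcal{G}_{x}=\HO^{n+2}_{\et}(A,\mathbb{Z}(n))$ injects into $\operatorname{Ker}(\HO^{n+2}_{\et}(k(\mathfrak{X}),\mathbb{Z}(n))\to\HO^{n+2}_{\et}(k(\mathcal{O}_{\mathfrak{X},\bar{x}}),\mathbb{Z}(n)))$ and the residue is identified with the boundary of the localization sequence, compatibly with the symbol description of \cite[Lemma 3.2]{Ge-L}. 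Since a global section of $\mathcal{G}$ factors at $\eta$ through every stalk $\mathcal{G}_{x}$, its image lies in the intersection over $x\in\mathfrak{X}^{(1)}$ of these local unramified subgroups, which is precisely $\HO^{n+2}_{\et}(k(\mathfrak{X}),\mathbb{Z}(n))^{\prime}$. Thus the image of $\HB^{n+1}(\mathfrak{X})$, being the kernel of the residue map, already lies in $\HO^{n+2}_{\et}(k(\mathfrak{X}),\mathbb{Z}(n))^{\prime}$, so restricting the residue map to this subgroup alters neither the left-hand injectivity nor the kernel; this yields the sequence as stated.

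The main obstacle will be the justification of the Gersten resolution for \'{e}tale motivic cohomology in this generality: one must verify the axioms of the Bloch--Ogus--Gabber method over an arbitrary base field (Gabber's presentation/effacement lemma and \'{e}tale excision) and, more delicately, control degrees so that the truncated purity of Proposition \ref{tru2}, valid only through degree $n+2$, already suffices to identify the codimension-one terms with $\HO^{n+1}_{\et}(\kappa(x),\mathbb{Z}(n-1))$ together with the correct differentials. One should also check that the purity residues agree with the boundary maps of Proposition \ref{supp}. Passing freely between $\mathbb{Z}(n)$ and $\mathbb{Q}/\mathbb{Z}(n)$ coefficients, legitimate by the reformulation $\HB^{n+1}(\mathfrak{X})=\Gamma(\mathfrak{X},\mathbf{R}^{n+1}\epsilon_{*}\mathbb{Q}/\mathbb{Z}(n))$ since the groups involved are torsion, removes the remaining bookkeeping over $\mathbb{Z}$.
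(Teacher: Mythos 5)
Your outer skeleton matches the paper's: both arguments reduce the statement to the exactness of a sequence of Zariski sheaves (equivalently, to the case where $\mathfrak{X}$ is the spectrum of a local ring of a smooth $k$-algebra), and your observation that the primed subgroup causes no trouble --- because a class coming from the stalk $\mathcal{G}_{x}$ automatically dies in $\HO^{n+2}_{\et}\bigl(k(\mathcal{O}_{\mathfrak{X},\bar{x}}),\mathbb{Z}(n)\bigr)$ --- is consistent with how the paper treats that condition (via Proposition \ref{supp}). The divergence, and the gap, is in how the local Gersten resolution is actually established. You propose to run the Bloch--Ogus--Gabber coniveau machine directly on $U\mapsto\HO^{*}_{\et}(U,\mathbb{Z}(n))$, citing ``homotopy invariance'' as an input and deferring the verification as ``the main obstacle.'' That obstacle is not a routine verification: after passing to torsion coefficients, the $p$-primary part of this theory in characteristic $p$ is $\HO^{*}_{\et}(-,\operatorname{W}\Omega^{n}_{r,\log})$ (shifted), which is \emph{not} homotopy invariant (already $\HO^{1}_{\et}(\mathbb{A}^{1}_{k},\mathbb{Z}/p)\neq\HO^{1}_{\et}(k,\mathbb{Z}/p)$ by Artin--Schreier theory), so the Bloch--Ogus--Gabber axioms you invoke fail for exactly the part of the coefficient ring that makes this proposition nontrivial. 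A secondary but real issue you flag without resolving is that the residue map out of $\HO^{n+2}_{\et}(k(\mathfrak{X}),\mathbb{Z}(n))$ lives in degree $n+3$ of $\mathbf{R}i^{!}\mathbb{Z}(n)_{\et}$, outside the range covered by the truncated purity of Proposition \ref{tru2}, so the differentials of your coniveau complex are not identified by the tools you cite.

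The paper avoids both problems by splitting the coefficients after sheafification: for $m$ invertible in $k$ it identifies $\mathbb{Z}/m(n)_{\et}$ with $\mu^{\otimes n}_{m}$ and quotes Panin's equicharacteristic Gersten theorem, and for the $p$-part in characteristic $p$ it identifies $\mathbb{Z}/p^{r}(n)_{\et}$ with $\operatorname{W}\Omega^{n}_{r,\log}[-n]$ and quotes Shiho's Gersten resolution for logarithmic Hodge--Witt cohomology, which is proved by entirely different (Cartier-operator) methods rather than by homotopy invariance. To repair your argument you would need to make the same case split and import those two external theorems; the uniform integral-coefficient coniveau argument as proposed does not go through.
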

\begin{proof}\upshape
Let
\begin{math}
g: 
\spec
k(\mathfrak{X})
\to
\mathfrak{X}
\end{math}
be the generic point.

Since 
$\mathbf{R}^{n+1}\epsilon_{*}\mathbb{Z}/m(n)$
is the Zariski sheaf on $\mathfrak{X}$ associated
to the presheaf
\begin{equation*}
U
\mapsto 
\HO^{n+1}_{\et}
\left(
U, 
\mathbb{Z}/m(n)
\right)
\end{equation*}
for any positive integer $m$,
we have homomorphisms
\begin{align*}
\mathbf{R}^{n+1}\epsilon_{*}\mathbb{Z}/m(n)
\to
g_{*}
\left(
\HO^{n+1}_{\et}
\left(
k(\mathfrak{X}),
\mathbb{Z}/m(n)
\right)
\right)
\end{align*}
and
\begin{equation*}
g_{*}
\left(
\HO^{n+1}_{\et}
\left(
k(\mathfrak{X}),
\mathbb{Z}/m(n)
\right)
\right)^{\prime}
\to
\bigoplus_{x\in \mathfrak{X}^{(1)}}
(i_{x})_{*}
\left(
\HO^{n+1}_{\et}
\left(
\kappa(x),
\mathbb{Z}/m(n)
\right)
\right).
\end{equation*}

Here
\footnotesize
\begin{align*}
&
g_{*}
\left(
\HO^{n+1}_{\et}
\left(
k(\mathfrak{X}),
\mathbb{Z}/m(n)
\right)
\right)^{\prime}  \\
=
&\operatorname{Ker}
\left(
g_{*}
\left(
\HO^{n+1}_{\et}
\left(
k(\mathfrak{X}),
\mathbb{Z}/m(n)
\right)
\right)
\to
\prod_{x\in \mathfrak{X}^{(1)}}
(i_{x})_{*}
\left(
\HO^{n+1}_{\et}
\left(
k(\mathcal{O}_{\mathfrak{X}, x}),
\mathbb{Z}/m(n)
\right)
\right)
\right). 
\end{align*}
\normalsize
Then it suffices to show that the sequence
\begin{align*}
0
\to
\mathbf{R}^{n+1}\epsilon_{*}\mathbb{Z}/m(n)
\to&
g_{*}
\left(
\HO^{n+1}_{\et}
\left(
k(\mathfrak{X}),
\mathbb{Z}/m(n)
\right)
\right)^{\prime}  \\
\to&
\prod_{x\in \mathfrak{X}^{(1)}}
(i_{x})_{*}
\left(
\HO^{n+1}_{\et}
\left(
k(\mathcal{O}_{\mathfrak{X}, x}),
\mathbb{Z}/m(n)
\right)
\right)
\end{align*}
is exact. Hence it suffices to show that the sequence (\ref{equiex})
is exact in the case where
$\mathfrak{X}$
is the spectrum of a local ring of a smooth algebra over $k$. 

If $m\in k^{*}$, then
there is a quasi-isomorphism
\begin{equation*}
\mathbb{Z}/m(n)_{\et}
\xrightarrow{\sim}
\mu^{\otimes n}_{m}[0]
\end{equation*}
in
$D^{b}(\mathfrak{X}_{\et}, \mathbb{Z}/m\mathbb{Z})$
by \cite[Theorem 1.5]{G-L2}.
Here $\mu_{l}$ is the sheaf of
$l$-th roots of unity.

On the other hand, if $k$ has characteristic $p$, then
there is a quasi-isomorphism
\begin{equation*}
\mathbb{Z}/p^{r}(n)_{\et}
\xrightarrow{\sim}
\operatorname{W}_{r}\Omega^{n}_{\mathfrak{X}, \log}[-n]
\end{equation*}
in
$D^{b}(\mathfrak{X}_{\et}, \mathbb{Z}/p^{r}\mathbb{Z})$
for any positive integer $r$
by \cite[p.787, \S 5, (12)]{Ge}.
Here $\operatorname{W}_{r}\Omega^{n}_{\mathfrak{X}, \log}$ is 
the logarithmic de Rham-Witt sheaf.

Therefore the
statement follows from \cite[5.2. Theorem C]{P} and
(
Proposition \ref{supp}, 
\cite[p.600, Theorem 4.1]{Sh}).
\end{proof}
In the following, we consider $\HB^{n}(\mathfrak{X})$ in the case
where $\mathfrak{X}$ has mixed-characteristic.
\begin{thm}\upshape\label{mixinj}
Let $A$ be a discrete valuation ring of mixed-characteristic,
$R$ a local ring of a smooth 
algebra over $A$
and $K$ the quotient field of $R$.

Then the homomorphism
\begin{equation*}
\HO^{n+2}_{\et}
\left(
R,
\mathbb{Z}(n)
\right)
\to
\HO^{n+2}_{\et}
\left(
K,
\mathbb{Z}(n)
\right)
\end{equation*}
is injective.
\end{thm}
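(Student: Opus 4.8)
The plan is first to reduce to finite coefficients. Since $\HO^{n+1}_{\et}(R,\mathbb{Z}(n))=\HO^{n+1}_{\et}(K,\mathbb{Z}(n))=0$ and $\HO^{n+2}_{\et}(R,\mathbb{Z}(n))$ is a torsion group (introduction), the Bockstein sequences of $0\to\mathbb{Z}(n)\xrightarrow{m}\mathbb{Z}(n)\to\mathbb{Z}/m(n)\to0$ identify the $m$-torsion subgroups of $\HO^{n+2}_{\et}(R,\mathbb{Z}(n))$ and $\HO^{n+2}_{\et}(K,\mathbb{Z}(n))$ with $\HO^{n+1}_{\et}(R,\mathbb{Z}/m(n))$ and $\HO^{n+1}_{\et}(K,\mathbb{Z}/m(n))$, compatibly with restriction. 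It therefore suffices to prove that $\HO^{n+1}_{\et}(R,\mathbb{Z}/m(n))\to\HO^{n+1}_{\et}(K,\mathbb{Z}/m(n))$ is injective for every $m$. Writing $m=l\,p^{r}$ with $l$ prime to $p$, the factor $\mathbb{Z}/l(n)$, with $l$ invertible in $A$, is handled by the injectivity already contained in the exact sequence (\ref{KC}), so I am reduced to the case $m=p^{r}$.

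Let $\pi$ be a uniformiser of $A$, let $j\colon U=\spec R[1/\pi]\to\spec R$ be the generic fibre and $i\colon Z=\spec(R/\pi R)\to\spec R$ the special fibre, a codimension-one closed subscheme which is a local ring of a smooth algebra over the residue field of $A$, hence of characteristic $p$. On $U$ the prime $p$ is invertible, so $\mathbb{Z}/p^{r}(n)_{\et}\simeq\mu_{p^{r}}^{\otimes n}$; for each $y\in U$ the ring $\mathcal{O}_{U,y}$ is a local ring of a smooth algebra over the characteristic-zero field $\operatorname{Frac}(A)$, and (\ref{KC}) applied over $\operatorname{Frac}(A)$ gives an injection $\HO^{n+1}_{\et}(\mathcal{O}_{U,y},\mu_{p^{r}}^{\otimes n})\hookrightarrow\HO^{n+1}_{\et}(K,\mu_{p^{r}}^{\otimes n})$. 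Hence a class $\alpha\in\HO^{n+1}_{\et}(R,\mathbb{Z}/p^{r}(n))$ mapping to $0$ in $\HO^{n+1}_{\et}(K,\mathbb{Z}/p^{r}(n))$ vanishes in every stalk of $U$, so that $\alpha|_{U}=0$. Feeding this into the localisation triangle attached to $(i,j)$ and using the codimension-one purity isomorphism (\ref{quasiil}) of Proposition \ref{tru2}, which yields $\HO^{n+1}_{Z}(\spec R,\mathbb{Z}/p^{r}(n))\cong\HO^{n-1}_{\et}(Z,\mathbb{Z}/p^{r}(n-1))$, I conclude that $\alpha=g(\beta)$, where $g$ is the Gysin map and $\beta\in\HO^{n-1}_{\et}(Z,\mathbb{Z}/p^{r}(n-1))$.

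Because $\ker g=\operatorname{im}\bigl(\partial\colon\HO^{n}_{\et}(U,\mathbb{Z}/p^{r}(n))\to\HO^{n-1}_{\et}(Z,\mathbb{Z}/p^{r}(n-1))\bigr)$ in the localisation sequence, proving $\alpha=0$ amounts to showing that $\beta$ lies in the image of this boundary (residue) map. Here the plan is to use that $Z$ is essentially smooth over a field of characteristic $p$: the characteristic-$p$ Gersten resolution (Proposition \ref{equiGer}, via \cite{Sh}) makes $\HO^{n-1}_{\et}(Z,\mathbb{Z}/p^{r}(n-1))\to\HO^{n-1}_{\et}(k(Z),\mathbb{Z}/p^{r}(n-1))$ injective, so $\beta$ is detected by its generic value $\bar\beta$; and applying Proposition \ref{supp} to the discrete valuation ring $R_{(\pi)}$, with fraction field $K$ and residue field $k(Z)$, shows that the tame symbol $\partial_{(\pi)}\colon\HO^{n}_{\et}(K,\mathbb{Z}/p^{r}(n))\to\HO^{n-1}_{\et}(k(Z),\mathbb{Z}/p^{r}(n-1))$ is surjective, so that $\bar\beta$ is the residue of some class in $\HO^{n}_{\et}(K,\mathbb{Z}/p^{r}(n))$. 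The main obstacle is to upgrade this to a class defined on the whole generic fibre: one must show that $\bar\beta$ already lies in the image of the composite $\HO^{n}_{\et}(U,\mathbb{Z}/p^{r}(n))\to\HO^{n}_{\et}(K,\mathbb{Z}/p^{r}(n))\xrightarrow{\partial_{(\pi)}}\HO^{n-1}_{\et}(k(Z),\mathbb{Z}/p^{r}(n-1))$, i.e.\ that the Milnor symbols whose tame residues exhaust $\HO^{n-1}_{\et}(k(Z),\mathbb{Z}/p^{r}(n-1))$ can be represented by functions invertible on all of $U$ rather than merely on a localisation. This is the step where the smoothness of $R$ over $A$ and the explicit Bloch--Kato--Gabber description of the coefficients have to be used; granting it, $\beta=\partial(\tilde\gamma)$ for some $\tilde\gamma\in\HO^{n}_{\et}(U,\mathbb{Z}/p^{r}(n))$, whence $\alpha=g(\beta)=0$ and the theorem follows.
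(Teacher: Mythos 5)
Your overall architecture (localize at the special fibre, apply purity, control the boundary map) is reasonable, but two steps are unjustified, and the second one is the entire difficulty of the theorem. First, from the vanishing of $\alpha$ in every stalk $\HO^{n+1}_{\et}(\mathcal{O}_{U,y},\mu_{p^{r}}^{\otimes n})$ you conclude $\alpha|_{U}=0$; since $U=\spec R[1/\pi]$ is not local, stalkwise vanishing of an \'etale cohomology class does not formally imply global vanishing. What makes it true here is the injectivity of $\HO^{n+1}_{\et}(U,\mathbb{Z}/p^{r}(n))\to\Gamma(U,\mathbf{R}^{n+1}\epsilon_{*}\mathbb{Z}/p^{r}(n))$, which rests on the vanishing $\HO^{i}_{\Zar}(U,\mathbb{Z}/p^{r}(n))=0$ for $i\geq n+1$ (Zariski localization for the pair $(V(\pi),U)$ together with Gersten vanishing for the two local schemes $\spec R$ and $V(\pi)$) --- precisely what Proposition \ref{CaseGBEt} combined with Proposition \ref{equiGer} supplies; this is fillable but must be said. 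Second, and decisively, the step you yourself flag as ``the main obstacle'' --- that $\bar\beta$ is the tame residue of a class extending over all of $U$ --- is left unproved, and it is not a routine refinement: with $\mathbb{Z}/p^{r}$-coefficients the group $\HO^{n+1}_{Z}(\spec R_{\et},\mathbb{Z}/p^{r}(n))\simeq\HO^{n-1}_{\et}(Z,\mathbb{Z}/p^{r}(n-1))$ is genuinely nonzero, and showing that it is exhausted by residues of symbols $\{\pi,u_{1},\dots,u_{n-1}\}$ with $u_{i}\in R^{*}$ amounts to showing that the global sections of $\operatorname{W}\Omega^{n-1}_{r,\log}$ over the local ring $R/(\pi)$ are generated by global $d\log$ symbols of units --- a Gersten-type statement comparable in depth to the theorem you are proving. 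As written, the proposal does not establish the result.

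The paper sidesteps both issues by \emph{not} reducing to finite coefficients. With integral weights the cohomology supported on the closed fibre $Y$ vanishes outright: by Proposition \ref{tru2} one has $\HO^{n+2}_{Y}(R_{\et},\mathbb{Z}(n))=\HO^{n}_{\et}(Y,\mathbb{Z}(n-1))=\HO^{n}_{\Zar}(Y,\mathbb{Z}(n-1))=0$, because $Y$ is a local ring of a smooth algebra over a field and the cohomological degree $n$ exceeds the weight $n-1$. Hence there is no class $\beta$ to dispose of: restriction $\HO^{n+2}_{\et}(R,\mathbb{Z}(n))\to\HO^{n+2}_{\et}(U,\mathbb{Z}(n))$ is injective for free, and the remaining injection into $\HO^{n+2}_{\et}(K,\mathbb{Z}(n))$ follows from Proposition \ref{CaseGBEt} and the equi-characteristic-zero Gersten statement of Proposition \ref{equiGer}. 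Your Bockstein reduction is valid (given $\HO^{n+1}_{\et}(R,\mathbb{Z}(n))=\HO^{n+1}_{\et}(K,\mathbb{Z}(n))=0$) but counterproductive: it converts an easy vanishing into a hard surjectivity. If you wish to keep the finite-coefficient framework, prove the integral statement first and then specialize.
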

\begin{proof}\upshape
Let 
$Y \to \spec R$
be the inclusion of the closed fiber with open
complement
$j: U \to \spec R$.
Then $Y$ is the spectrum of a local ring of smooth
algebra over a field and
\begin{equation*}
\HO^{n}_{\et}
\left(
Y, 
\mathbb{Z}(n-1)
\right)
=
\HO^{n}_{\Zar}
\left(
Y,
\mathbb{Z}(n-1)
\right)
\end{equation*}
by (\cite[p.774, Theorem 1.2.2]{Ge}, \cite{V2}).
Moreover
\begin{equation*}
\HO^{n}_{\Zar}
\left(
Y,
\mathbb{Z}(n-1)
\right)
=0
\end{equation*}
by
\cite[p.779, Theorem 3.2]{Ge}
and 
\cite[p.786, Corollary 4.4]{Ge}. 
Hence
\begin{equation*}
\HO^{n+2}_{Y}
\left(
R_{\et}, 
\mathbb{Z}(n)
\right)
=
\HO^{n}_{\et}
\left(
Y, 
\mathbb{Z}(n-1)
\right)
=
0
\end{equation*}
by 
Proposition \ref{tru2}.
Therefore the homomorphism
\begin{equation*}
\HO^{n+2}_{\et}
\left(
R, 
\mathbb{Z}(n)
\right)
\to
\HO^{n+2}_{\et}
\left(
U, 
\mathbb{Z}(n)
\right)
\end{equation*}
is injective.

On the other hand,
\begin{align*}
\HO^{i}_{\Zar}
\left(
U,
\mathbb{Z}(n)
\right)
=
0
\end{align*}
for $i\geq n+2$ by 
\cite[p.779, Theorem 3.2]{Ge} and 
\cite[p.786, Corollary 4.4]{Ge}. 

Hence
\begin{equation*}
\HO^{n+2}_{\et}
\left(
U, 
\mathbb{Z}(n)
\right)
=
\HB^{n+1}
\left(
U
\right)
\end{equation*}
by Proposition \ref{CaseGBEt}
and the homomorphism
\begin{equation*}
\HB^{n+1}
\left(
U
\right)
\to
\HO^{n+2}_{\et}
\left(
K,
\mathbb{Z}(n)
\right)
\end{equation*}
is injective by Proposition \ref{equiGer}. 
Therefore the statement follows.
\end{proof}
\begin{cor}\upshape\label{intmax}
Let $A$ be a Dedekind domain of mixed-characteristic
and $\mathfrak{X}$  an essentially smooth scheme over $\spec A$. 

Then
\begin{align*}
\HB^{n+1}
\left(
\mathfrak{X}
\right)
=
\displaystyle
\bigcap_{x\in \mathfrak{X}_{(0)}}
\HB^{n+1}
\left(
\spec
\mathcal{O}_{\mathfrak{X}, x}
\right).
\end{align*}
\end{cor}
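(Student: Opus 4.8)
The plan is to realize $\HB^{n+1}(\mathfrak{X})$ as the global sections of the Zariski sheaf $\mathcal{F} := \mathbf{R}^{n+2}\epsilon_{*}\mathbb{Z}(n)$ on $\mathfrak{X}$ and to recover it from its stalks by a sheaf-gluing argument, the injectivity furnished by Theorem \ref{mixinj} playing the role that injectivity into the function field played in the equi-characteristic and Brauer-group cases. First I would reduce to the case where $\mathfrak{X}$ is integral: since $\mathfrak{X}$ is essentially smooth, hence regular, its connected components are irreducible and reduced, and both sides of the claimed equality decompose over these components, so I may assume $\mathfrak{X}$ integral with generic point $\eta$ and function field $k(\mathfrak{X})$. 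By Definition \ref{DefGB} we have $\HB^{n+1}(\mathfrak{X}) = \Gamma(\mathfrak{X}, \mathcal{F})$, and by \cite[p.88, III, Proposition 1.13]{M} the stalk of $\mathcal{F}$ at a point $x$ is $\mathcal{F}_{x} = \HO^{n+2}_{\et}(\spec\mathcal{O}_{\mathfrak{X}, x}, \mathbb{Z}(n))$; since $\mathcal{O}_{\mathfrak{X}, x}$ is a local ring of a smooth algebra over the Dedekind domain $A$, Proposition \ref{CaseGBEt} identifies this with $\HB^{n+1}(\spec\mathcal{O}_{\mathfrak{X}, x})$. In particular the generic stalk is $\mathcal{F}_{\eta} = \HO^{n+2}_{\et}(k(\mathfrak{X}), \mathbb{Z}(n))$.

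Next I would exploit Theorem \ref{mixinj}. Localizing $A$ at the maximal ideal below $x$ makes it a mixed-characteristic discrete valuation ring, and $\mathcal{O}_{\mathfrak{X}, x}$ remains a local ring of a smooth algebra over it with fraction field $k(\mathfrak{X})$; hence the generization map $\mathcal{F}_{x} \to \mathcal{F}_{\eta}$ is injective for every $x \in \mathfrak{X}$. Letting $g: \eta \to \mathfrak{X}$ denote the generic point, these injections assemble into a monomorphism of sheaves $\mathcal{F} \hookrightarrow g_{*}\mathcal{F}_{\eta}$; since $g_{*}\mathcal{F}_{\eta}$ is the constant sheaf on the integral scheme $\mathfrak{X}$, taking global sections yields an injection $\HB^{n+1}(\mathfrak{X}) = \Gamma(\mathfrak{X}, \mathcal{F}) \hookrightarrow \mathcal{F}_{\eta}$, so all the groups in sight are subgroups of $\HO^{n+2}_{\et}(k(\mathfrak{X}), \mathbb{Z}(n))$ and the asserted intersection is meaningful. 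The inclusion $\HB^{n+1}(\mathfrak{X}) \subseteq \bigcap_{x\in\mathfrak{X}_{(0)}}\HB^{n+1}(\spec\mathcal{O}_{\mathfrak{X}, x})$ is then immediate: a global section has a germ $s_{x}\in\mathcal{F}_{x}$ at each closed point $x$, and its image $s_{\eta}\in\mathcal{F}_{\eta}$ lies in $\operatorname{im}(\mathcal{F}_{x}\to\mathcal{F}_{\eta}) = \HB^{n+1}(\spec\mathcal{O}_{\mathfrak{X}, x})$.

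For the reverse inclusion I would argue by spreading out and gluing. Given $\alpha \in \mathcal{F}_{\eta}$ lying in $\HB^{n+1}(\spec\mathcal{O}_{\mathfrak{X}, x})$ for every $x \in \mathfrak{X}_{(0)}$, the identification $\mathcal{F}_{x} = \varinjlim_{U\ni x}\mathcal{F}(U)$ lets me choose, for each closed point $x$, an open $U_{x}\ni x$ and a section $s_{x}\in\mathcal{F}(U_{x})$ whose generic germ is $\alpha$. On a nonempty overlap $U_{x}\cap U_{x'}$ both $s_{x}$ and $s_{x'}$ restrict to $\alpha$ in $\mathcal{F}_{\eta}$, and since $\mathcal{F}(U_{x}\cap U_{x'})\hookrightarrow\mathcal{F}_{\eta}$ by the monomorphism argument applied to the integral open $U_{x}\cap U_{x'}$, they agree there; the sheaf axiom glues the $s_{x}$ to a section over $\bigcup_{x}U_{x}$ with generic germ $\alpha$. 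The main obstacle is to see that these opens actually cover $\mathfrak{X}$: the complement $\mathfrak{X}\smallsetminus\bigcup_{x}U_{x}$ is closed and contains no point of $\mathfrak{X}_{(0)}$, so I must invoke that every nonempty closed subset of $\mathfrak{X}$ contains a point of dimension zero, which reduces to the analogue of \cite[Remark 7.18]{SakD} in the cases of interest and is trivial when $\mathfrak{X}$ is local, as then $\mathfrak{X}_{(0)}$ is a single point. Granting this, $\bigcup_{x}U_{x} = \mathfrak{X}$ and $\alpha$ extends to a global section, giving $\bigcap_{x\in\mathfrak{X}_{(0)}}\HB^{n+1}(\spec\mathcal{O}_{\mathfrak{X}, x}) \subseteq \HB^{n+1}(\mathfrak{X})$ and hence the desired equality.
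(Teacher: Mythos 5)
Your route is genuinely different from the paper's: the paper splits $\mathfrak{X}$ into its generic fibre $\mathfrak{X}_K$ and the closed fibres via the localization triangle of \cite[p.778, Lemma 2.4]{Ge} for the sheaf $\mathbf{R}^{n+1}\epsilon_{*}\mathbb{Z}/m(n)$, uses Theorem \ref{mixinj} to make the map to $\mathbf{R}j_{*}j^{*}(\cdots)$ injective, and then quotes Proposition \ref{equiGer} wholesale for the equicharacteristic scheme $\mathfrak{X}_K$; you instead glue sections of $\mathcal{F}=\mathbf{R}^{n+2}\epsilon_{*}\mathbb{Z}(n)$ directly from the stalks. The step you flag as ``the main obstacle'' is in fact harmless: $\mathfrak{X}$ is quasi-compact, so every nonempty closed subset contains a point that is closed in $\mathfrak{X}$, i.e.\ a point of $\mathfrak{X}_{(0)}$, and the opens $U_{x}$ therefore cover; no analogue of \cite[Remark 7.18]{SakD} is needed there.

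The genuine gap is the sentence ``Localizing $A$ at the maximal ideal below $x$ makes it a mixed-characteristic discrete valuation ring.'' This fails for every point $x$ lying over the generic point of $\spec A$: the prime of $A$ below such an $x$ is $(0)$, the localization is the characteristic-zero field $K$, and Theorem \ref{mixinj} --- stated only for local rings of smooth algebras over a mixed-characteristic discrete valuation ring --- does not apply. These points cannot be avoided: you need $\mathcal{F}_{x}\hookrightarrow\mathcal{F}_{\eta}$ at \emph{all} points to obtain the monomorphism $\mathcal{F}\hookrightarrow g_{*}\mathcal{F}_{\eta}$ and hence the injectivity of $\Gamma(V,\mathcal{F})\to\mathcal{F}_{\eta}$ that underlies both inclusions and the agreement of the $s_{x}$ on overlaps; moreover $\mathfrak{X}_{(0)}$ itself contains points of the generic fibre (e.g.\ the closed point $V(\pi t-1)$ of $\mathbf{A}^{1}_{A}$), so even the intersection on the right-hand side involves local rings that are essentially smooth over $K$ rather than over a discrete valuation ring. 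The repair is exactly the extra input the paper's proof carries: for such $x$ the injectivity of $\HB^{n+1}(\spec\mathcal{O}_{\mathfrak{X},x})\to\HO^{n+2}_{\et}(k(\mathfrak{X}),\mathbb{Z}(n))$ follows from Proposition \ref{equiGer} applied over the characteristic-zero field $K$. With that citation added in place of the misapplied Theorem \ref{mixinj}, and quasi-compactness substituted for your covering worry, your gluing argument closes up and yields a somewhat more elementary proof than the one in the paper.
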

\begin{proof}\upshape
Let 
\begin{math}
j: \mathfrak{X}_{K}
\to
\mathfrak{X}
\end{math}
be the inclusion of the generic fiber
and
$i_{a}: \mathfrak{X}_{a}\to \mathfrak{X}$
the closed embedding of the fiber of 
$\mathfrak{X}$ 
over 
$a\in (\spec A)^{(1)}$. 
Then we have a distinguished
triangle
\begin{align*}
\cdots
\to
\bigoplus_{a\in (\spec A)^{(1)}}
i_{a*}\mathbf{R}i^{!}_{a}
\left(
\mathbf{R}^{n+1}\epsilon_{*}\mathbb{Z}/m(n)
\right)
\to
\mathbf{R}^{n+1}\epsilon_{*}\mathbb{Z}/m(n) \\
\to
\mathbf{R}j_{*}j^{*}
\left(
\mathbf{R}^{n+1}\epsilon_{*}\mathbb{Z}/m(n)
\right)
\to
\cdots
\end{align*}
by \cite[p.778, Lemma 2.4]{Ge}. On the other hand,
\begin{equation*}
\left(
j_{*}j^{*}
\left(
\mathbf{R}^{n+1}
\mathbb{Z}/m(n)
\right)
\right)_{x}
=
\Gamma
\left(
\left(
\spec
\mathcal{O}_{\mathfrak{X}, x}
\right)_{K},
\mathbf{R}^{n+1}
\epsilon_{*}
\mathbb{Z}/m(n)
\right)
\end{equation*}
by 
\cite[p.88, III, Proposition 1.13]{M} 
and the homomorphism
\begin{equation*}
\mathbf{R}^{n+1}\epsilon_{*}
\mathbb{Z}/m(n)
\to
j_{*}j^{*}
\left(
\mathbf{R}^{n+1}\epsilon_{*}
\mathbb{Z}/m(n)
\right)
\end{equation*}
is injective by Theorem \ref{mixinj}. Moreover, the homomorphism
\footnotesize
\begin{align*}
\Gamma
\left(
\mathfrak{X},
i_{a*}\mathbf{R}^{1}i^{!}_{a}
\left(
\mathbf{R}^{n+1}\epsilon_{*}
\mathbb{Z}/m(n)
\right)
\right)
\to
\displaystyle
\prod_{x\in 
\mathfrak{X}_{(0)}
}
\HO^{1}_{
\left(
\mathcal{O}_{\mathfrak{X}, x}
\right)_{a}
}
\left(
\mathcal{O}_{\mathfrak{X}, x},
\mathbf{R}^{n+1}
\epsilon_{*}\mathbb{Z}/m(n)
\right)
\end{align*}
\normalsize
is injective
where
\begin{math}
\left(
\mathcal{O}_{\mathfrak{X}, x}
\right)_{a}
=
\mathcal{O}_{\mathfrak{X}, x}
\otimes_{A}
A/a
\end{math}
. Hence the sequence
\footnotesize
\begin{align*}
0
\to
\HB^{n+1}
\left(
\mathfrak{X}
\right)
\to
\HB^{n+1}
\left(
\mathfrak{X}_{K}
\right)
\to
\displaystyle
\prod
_{
\substack{
x\in \mathfrak{X}_{(0)}\\
a\in 
(
\spec A
)_{(0)}
}
}
\HO^{1}_{
\left(
\mathcal{O}_{\mathfrak{X}, x}
\right)_{a}
}
\left(
\mathcal{O}_{\mathfrak{X}, x},
\mathbf{R}^{n+1}
\epsilon_{*}\mathbb{Z}/m(n)
\right)
\end{align*}
\normalsize
is exact and
\begin{align*}
\HB^{n+1}
\left(
\mathfrak{X}_{K}
\right)
=
\bigcap_{x\in
\left(
\mathfrak{X}_{K}
\right)_{(0)}
}
\HB^{n+1}
\left(
\spec \mathcal{O}_{\mathfrak{X}_{K}, x}
\right)
\end{align*}
by Proposition \ref{equiGer}. Therefore the statement follows.
\end{proof}
\begin{lem}\upshape\label{co2HBet}
Let 
$A$ be a discrete valuation ring of mixed-characteristic,
$R$ a local ring of a smooth algebra over 
$A$ and $X=\spec R$. 
Let $i: Z\to X$ be a regular closed subscheme of
codimension $2$ with open complement $j: U\to X$.
Suppose that 
$\operatorname{char}(Z)=p>0$.

Then
\begin{equation*}
\HO_{\Zar}^{i}
\left(
U,
\mathbb{Z}(n)
\right)
=
0
\end{equation*}
for $i\geq n+2$ and
\begin{equation}\label{co2}
\HB^{n}
\left(
U
\right)
=
\HO^{n+1}_{\et}
\left(
U,
\mathbb{Z}(n-1)
\right)
\end{equation}
\end{lem}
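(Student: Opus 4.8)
My plan is to deduce the second assertion (\ref{co2}) from the first one by means of Proposition \ref{CaseGBEt}, so that the real work lies in the Zariski vanishing $\HO^{i}_{\Zar}(U,\mathbb{Z}(n))=0$ for $i\geq n+2$. Indeed, $U$ is an open subscheme of $X=\spec R$, hence essentially smooth over $\spec A$ and so over the spectrum of a Dedekind domain; applying Proposition \ref{CaseGBEt} to $U$ with weight $n-1$ gives $\HB^{n}(U)=\HO^{n+1}_{\et}(U,\mathbb{Z}(n-1))$ as soon as $\HO^{i}_{\Zar}(U,\mathbb{Z}(n-1))=0$ for $i\geq n+1$, and this is precisely the first assertion with $n$ replaced by $n-1$. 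Thus it suffices to establish the vanishing statement for all weights, and I now concentrate on it.

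To prove $\HO^{i}_{\Zar}(U,\mathbb{Z}(n))=0$ for $i\geq n+2$, I would use the localization long exact sequence for the Zariski cycle complex attached to $i\colon Z\to X$ and $j\colon U\to X$,
\[
\cdots\to\HO^{i}_{Z,\Zar}(X,\mathbb{Z}(n))\to\HO^{i}_{\Zar}(X,\mathbb{Z}(n))\to\HO^{i}_{\Zar}(U,\mathbb{Z}(n))\to\HO^{i+1}_{Z,\Zar}(X,\mathbb{Z}(n))\to\cdots.
\]
Since $X=\spec R$ is the spectrum of a local ring of a smooth algebra over a Dedekind domain, one has $\HO^{i}_{\Zar}(X,\mathbb{Z}(n))=0$ for $i\geq n+1$ by \cite[p.779, Theorem 3.2 b)]{Ge} and \cite[p.786, Corollary 4.4]{Ge}. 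Hence, for $i\geq n+2$, the sequence identifies $\HO^{i}_{\Zar}(U,\mathbb{Z}(n))$ with $\HO^{i+1}_{Z,\Zar}(X,\mathbb{Z}(n))=\HO^{i+1}(Z,\mathbf{R}i^{!}\mathbb{Z}(n)_{\Zar})$, so everything reduces to showing $\HO^{j}_{Z,\Zar}(X,\mathbb{Z}(n))=0$ for $j\geq n+3$.

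For this I would invoke a purity (Gysin) isomorphism $\HO^{j}_{Z,\Zar}(X,\mathbb{Z}(n))\cong\HO^{j-4}_{\Zar}(Z,\mathbb{Z}(n-2))$ for the regular closed immersion $i$ of codimension $2$ --- the Zariski analogue of Proposition \ref{tru2}, which one can also extract from the Gersten resolution of the motivic cohomology sheaves on the regular scheme $X$, since the codimension-$s$ points of $X$ lying on $Z$ are exactly the codimension-$(s-2)$ points of $Z$ and the residue-field terms match after the weight drop. It then remains to see that $\HO^{m}_{\Zar}(Z,\mathbb{Z}(n-2))=0$ for $m>n-2$, and here the hypothesis $\operatorname{char}(Z)=p>0$ is decisive: $Z=\spec(R/I)$ is a regular local ring containing the perfect field $\mathbb{F}_{p}$, hence geometrically regular over $\mathbb{F}_{p}$, so by N\'eron--Popescu desingularization it is a filtered colimit of local rings of smooth $\mathbb{F}_{p}$-algebras; as motivic cohomology commutes with such colimits and vanishes above the weight for a local ring of a smooth algebra over a field (\cite[p.779, Theorem 3.2 b)]{Ge}, \cite[p.786, Corollary 4.4]{Ge}), the vanishing follows. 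Taking $m=j-4\geq n-1>n-2$ for $j\geq n+3$ gives $\HO^{j}_{Z,\Zar}(X,\mathbb{Z}(n))=0$, which finishes the first assertion, and (\ref{co2}) follows as above.

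The main obstacle is the purity isomorphism in mixed characteristic for the regular codimension-$2$ pair $(X,Z)$: Proposition \ref{tru2} and \cite[p.774, Theorem 1.2.1]{Ge} supply only the truncated version, and in the \'{e}tale topology. To get what I need I would either factor $i$ through the closed fiber $Y$ (codimension $1$ in $X$, where \cite[p.774, Theorem 1.2.1]{Ge} applies) followed by the equicharacteristic immersion $Z\to Y$, reducing to point-wise purity at the points of $Z$, or verify directly that the available truncation level is high enough to control $\HO^{j}_{Z,\Zar}(X,\mathbb{Z}(n))$ in the range $j\geq n+3$, using that $\mathbb{Z}(n-2)_{\Zar}$ is concentrated in degrees $\leq n-2$ on $Z$. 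I expect reconciling the truncated, \'{e}tale purity that is actually available with the untruncated, Zariski statement needed here to be the delicate technical point.
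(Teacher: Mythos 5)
Your proposal is correct and follows essentially the same route as the paper: reduce (\ref{co2}) to the Zariski vanishing via Proposition \ref{CaseGBEt}, obtain the vanishing on $Z$ by exhibiting it as a filtered colimit of local rings of smooth $\mathbb{F}_{p}$-algebras (the paper invokes Quillen's method, \cite[\S 7, proof of Theorem 5.11]{Q}, where you invoke Popescu), and conclude by localization. The only real divergence is the paragraph in which you treat the untruncated codimension-two Gysin identification $\HO^{j}_{Z,\Zar}(X,\mathbb{Z}(n))\simeq\HO^{j-4}_{\Zar}(Z,\mathbb{Z}(n-2))$ as the delicate point: for Bloch's cycle complex in the Zariski topology this is not an \'{e}tale purity statement at all but the localization theorem \cite[p.779, Theorem 3.2]{Ge} (the cone of $z^{n}(X,\ast)\to z^{n}(U,\ast)$ is computed by $z^{n-2}(Z,\ast)$), which holds integrally, in all degrees, and with no characteristic or truncation hypotheses --- this single citation is what the paper uses, so neither of your proposed workarounds is needed.
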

\begin{proof}\upshape
Let $Z=\spec R^{\prime}$. Then $R^{\prime}$ is a local ring of a regular
ring of finite type over a field. 
By Quillen's method 
(cf.\cite[\S7, The proof of Theorem 5.11]{Q}),
\begin{equation*}
R^{\prime}
=
\displaystyle
\lim_{\to}
R_{i}^{\prime}
\end{equation*}
where $R_{i}^{\prime}$ is a local ring of a smooth algebra over 
$\mathbb{F}_{p}$ and the maps $R_{i}^{\prime}\to R_{j}^{\prime}$
are flat.
Hence
\begin{equation}\label{equvan}
\HO^{i}_{\Zar}
\left(
Z,
\mathbb{Z}(n)
\right)
=
0
\end{equation}
for $i\geq n+1$ by \cite[p.786, Corollary 4.4]{Ge}. 
Therefore
\begin{equation*}
\HO^{i}_{\Zar}
\left(
U,
\mathbb{Z}(n)
\right)
=
0
\end{equation*}
by \cite[p.779, Theorem 3.2]{Ge} and we have the equation 
(\ref{co2})
by Proposition \ref{CaseGBEt}. This completes the proof.
\end{proof}
\begin{prop}\upshape\label{HBco2P}
With the notations of Lemma \ref{co2HBet}, 
we have
\begin{equation*}
\HB^{n}
\left(
X
\right)
=
\HB^{n}
\left(
U
\right).
\end{equation*}
\end{prop}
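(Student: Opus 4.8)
The plan is to pass to étale motivic cohomology and run a localization sequence with supports on $Z$, controlling the support groups by the purity isomorphisms of Proposition \ref{tru2}. Since $X=\spec R$ is the spectrum of a local ring of a smooth algebra over the Dedekind ring $A$, Proposition \ref{CaseGBEt} gives $\HB^{n}(X)=\HO^{n+1}_{\et}(X,\mathbb{Z}(n-1))$, while Lemma \ref{co2HBet} gives $\HB^{n}(U)=\HO^{n+1}_{\et}(U,\mathbb{Z}(n-1))$. Writing $i\colon Z\to X$ and $j\colon U\to X$, I would study the restriction $\rho$ through the exact sequence
\[
\HO^{n+1}_{Z}(X,\mathbb{Z}(n-1))\to\HO^{n+1}_{\et}(X,\mathbb{Z}(n-1))\xrightarrow{\rho}\HO^{n+1}_{\et}(U,\mathbb{Z}(n-1))\xrightarrow{\partial}\HO^{n+2}_{Z}(X,\mathbb{Z}(n-1)),
\]
where $\HO^{k}_{Z}(X,\mathbb{Z}(n-1))=\HO^{k}_{\et}(Z,\mathbf{R}i^{!}\mathbb{Z}(n-1))$. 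Both end groups are torsion (the rational part vanishes in these degrees, as explained in the introduction), so I may analyse the support terms after reduction modulo $m$, keeping track of equation (\ref{GBTm}).

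For the injectivity of $\rho$ I would show that the forget-supports map $\HO^{n+1}_{Z}(X,\mathbb{Z}(n-1))\to\HO^{n+1}_{\et}(X,\mathbb{Z}(n-1))$ vanishes. Theorem \ref{mixinj}, applied with $n$ replaced by $n-1$, yields an injection $\HO^{n+1}_{\et}(X,\mathbb{Z}(n-1))\hookrightarrow\HO^{n+1}_{\et}(k(R),\mathbb{Z}(n-1))$; since the generic point of $X$ lies in $U$, this injection factors through $\rho$, and exactness then forces the image of the support term in $\HO^{n+1}_{\et}(X,\mathbb{Z}(n-1))$ to be zero. Hence $\rho$ is injective.

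The essential new input is the shape of $\mathbf{R}i^{!}$ in the $p$-primary case. As $Z$ has characteristic $p$, the logarithmic de Rham--Witt description $\mathbb{Z}/p^{r}(n-3)_{\et}\simeq\operatorname{W}\Omega^{n-3}_{r,\log}[-(n-3)]$ recalled in the introduction places $\mathbb{Z}/p^{r}(n-3)_{\et}[-4]$ in the single degree $n+1$. Consequently the finite-coefficient purity (\ref{quasiil}) of Proposition \ref{tru2}, taken with codimension $c=2$ and weight $n-1$, gives $\tau_{\leq n}\mathbf{R}i^{!}\mathbb{Z}/p^{r}(n-1)\simeq\tau_{\leq n}\bigl(\mathbb{Z}/p^{r}(n-3)[-4]\bigr)=0$, so that $\HO^{k}_{Z}(X,\mathbb{Z}/p^{r}(n-1))=0$ for all $k\leq n$. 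For $m$ prime to $p$ the complex $\mathbb{Z}/m(n-1)_{\et}$ is $\mu_{m}^{\otimes(n-1)}$ on all of $X$ and one is in the classical Bloch--Ogus--Gabber situation, where the Gersten resolution holds and the codimension-one points of $X$ and $U$ coincide; this settles the prime-to-$p$ part of the isomorphism outright.

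The hard part will be the surjectivity of $\rho$, equivalently the vanishing of $\partial$ on the $p$-primary part. Here $\HO^{n+2}_{Z}(X,\mathbb{Z}(n-1))$ sits one degree above the range controlled by (\ref{quasiil}) and is genuinely nonzero --- its $p$-primary contribution is built from $\Gamma(Z,\operatorname{W}\Omega^{n-3}_{r,\log})$ --- so I cannot simply kill the target. Instead I would localize further at a generic point $z$ of $Z$, reducing to the two-dimensional regular local ring $\mathcal{O}_{X,z}$ whose closed point lies in the characteristic-$p$ fibre; then $Z$ becomes the closed point and $\partial$ is the associated codimension-two residue. Factoring $\mathbf{R}i_{z}^{!}=\mathbf{R}i_{z,Y}^{!}\,\mathbf{R}i_{Y}^{!}$ through the closed fibre $Y=V(\pi)$, I would treat the discrete-valuation (generic-fibre) direction with Proposition \ref{supp} and the equicharacteristic direction along $Y$ with the Gersten sequence of Proposition \ref{equiGer}, which applies because $Y$ is essentially smooth over the residue field of $A$. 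The point to be proved is that a class on $U$, being unramified at every codimension-one point of $X$ (all of which lie in $U$), has trivial iterated residue at $z$, since that residue factors through the codimension-one residues that already vanish. Assembling this codimension-two exactness from the two codimension-one statements, in the presence of the $p$-adic truncation above, is the technical heart of the argument and the step I expect to require the most care.
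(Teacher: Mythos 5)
Your reduction to \'{e}tale motivic cohomology via Proposition \ref{CaseGBEt} and Lemma \ref{co2HBet}, the localization sequence with supports on $Z$, the injectivity of $\rho$ through Theorem \ref{mixinj}, and the prime-to-$p$ case via the classical Gersten resolution together with $X^{(1)}=U^{(1)}$ all match the paper. The genuine gap is exactly where you place the ``technical heart'': you never prove that $\partial$ vanishes on the $p$-primary part, and the route you sketch is not the one that works. Your guiding heuristic --- that the codimension-two residue of a class defined on $U$ ``factors through the codimension-one residues that already vanish'' --- is itself a Gersten-exactness assertion of essentially the same strength as the statement being proved; assembling it from Proposition \ref{supp} along the generic fibre direction and Proposition \ref{equiGer} along $Y=V(\pi)$ is precisely the codimension-two exactness one is trying to establish, so as written the plan is circular, or at least defers the entire difficulty to an unproved claim.

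The paper's actual mechanism is different and short: it upgrades the purity by one degree and then runs a Bockstein argument. By \cite[p.540, Theorem 4.4.7]{SaD} and \cite[p.187, Remark 3.7]{SaR} one has the \emph{untruncated} identification $\mathbb{Z}/p(n-3)^{Z}_{\et}[-4]\xrightarrow{\sim}\tau_{\leq n+1}\mathbf{R}i^{!}\mathbb{Z}/p(n-1)^{X}_{\et}$, which reaches degree $n+1$ --- exactly the degree you correctly observed is missed by (\ref{quasiil}). Combining this with the integral purity (\ref{puri}) and the vanishing (\ref{equvan}), the reduction map $\HO^{n+1}_{Z}\left(X_{\et},\mathbb{Z}(n-1)\right)\to\HO^{n+1}_{Z}\left(X_{\et},\mathbb{Z}/p(n-1)\right)$ is identified with the surjection $\HO^{n-3}_{\et}\left(Z,\mathbb{Z}(n-3)\right)\to\HO^{n-3}_{\et}\left(Z,\mathbb{Z}/p(n-3)\right)$, and the long exact coefficient sequence then forces $\HO^{n+2}_{Z}\left(X_{\et},\mathbb{Z}(n-1)\right)_{p}=0$. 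Since every class of $\HB^{n}(U)$ is torsion, $\partial$ therefore kills the whole $p$-primary part, and surjectivity of $\rho$ on $p$-torsion follows with no dévissage at all. If you replace your localization-at-$z$ paragraph by this Bockstein step, the rest of your write-up can stand.
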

\begin{proof}\upshape
We have
\begin{equation*}
\HB^{n}
\left(
U
\right)
=
\displaystyle
\bigcap_{x\in U_{(0)}}
\HB^{n}
\left(
\mathcal{O}_{U, x}
\right)
\end{equation*}
by Corollary \ref{intmax}. 

Let $l$ be a positive integer which is prime to 
$\operatorname{char}(Z)=p>0$.
Suppose that $\mu_{l}\subset A$.
Then
\begin{align*}
&\HB^{n}
\left(
\mathcal{O}_{U, x}
\right)_{l}
\left(
\overset{\mathrm{def}}{=}
\operatorname{Ker}
\left(
\HB^{n}
\left(
\mathcal{O}_{U, x}
\right)
\xrightarrow{\times l}
\HB^{n}
\left(
\mathcal{O}_{U, x}
\right)
\right) 
\right) \\
=
&\displaystyle
\bigcap_{y\in U^{(1)}}
\HB^{n}
\left(
\mathcal{O}_{U, y}
\right)_{l}
\left(
=
\displaystyle
\bigcap_{y\in U^{(1)}}
\operatorname{Ker}
\left(
\HB^{n}
\left(
\mathcal{O}_{U, y}
\right)
\xrightarrow{\times l}
\HB^{n}
\left(
\mathcal{O}_{U, y}
\right)
\right)
\right)
\end{align*}
for $x\in U_{(0)}$ by 
\cite[p.774, Theorem 1.2.(2, 4, 5)]{Ge} and 
\cite{V2}. 

On the other hand, we have
\begin{equation*}
\HB^{n}
\left(
X
\right)_{l}
=
\displaystyle
\bigcap_{x\in X^{(1)}}
\HB^{n}
\left(
\mathcal{O}_{X, x}
\right)_{l}
\end{equation*}
by \cite[p.774, Theorem 1.2.(2, 4, 5)]{Ge} and 
\cite{V2}. 

Since
\begin{math}
X^{(1)}=U^{(1)},
\end{math}
we have
\begin{equation}\label{2purityl}
\HB^{n}
\left(
X
\right)_{l}
=
\HB^{n}
\left(
U
\right)_{l}.
\end{equation}
Even if that is the case where $\mu_{l}\not\subset A$,
we can show the equation (\ref{2purityl}) 
by a standard norm argument.
%
%
%

\vspace{2.0mm}

Therefore it is sufficient to prove that
\begin{equation*}
\HB^{n}
\left(
X
\right)_{p}
=
\HB^{n}
\left(
U
\right)_{p}
\end{equation*}
in the case where $A$ has mixed-characteristic $(0, p)$.

\vspace{2.0mm}

Assume that $A$ is a discrete valuation of mixed-characteristic
$(0, p)$. Then we have quasi-isomorphisms
\begin{equation*}
\tau_{\leq n+1}
\left(
\mathbb{Z}(n-3)^{Z}_{\et}[-4]
\right)
\xrightarrow{\sim}
\tau_{\leq n+1}
\mathbf{R}i^{!}
\mathbb{Z}(n-1)^{X}_{\et}
\end{equation*}
by Proposition \ref{tru2} and
\begin{equation*}
\mathbb{Z}/p(n-3)^{Z}_{\et}[-4]
\xrightarrow{\sim}
\tau_{\leq n+1}
\mathbf{R}i^{!}
\mathbb{Z}/p(n-1)^{X}_{\et}
\end{equation*}
by \cite[p.540, Theorem 4.4.7]{SaD} and 
\cite[p.187, Remark 3.7]{SaR}. 

Since the sequence
\begin{align*}
\HO^{n-3}_{\et}
\left(
Z, 
\mathbb{Z}(n-3)
\right)
\xrightarrow{\times p}
\HO^{n-3}_{\et}
\left(
Z, 
\mathbb{Z}(n-3)
\right)
\to
\HO^{n-3}_{\et}
\left(
Z, 
\mathbb{Z}/p(n-3)
\right)
\to 
0
\end{align*}
is exact 
by (\cite[p.774, Theorem 1.2.2]{Ge}, \cite{V2}) and the equation (\ref{equvan}), 
the sequence
\footnotesize
\begin{align*}
\HO^{n+1}_{Z}
\left(
X_{\et}, 
\mathbb{Z}(n-1)
\right)
\xrightarrow{\times p}
\HO^{n+1}_{Z}
\left(
X_{\et}, 
\mathbb{Z}(n-1)
\right)
\to
\HO^{n+1}_{Z}
\left(
X_{\et}, 
\mathbb{Z}/p(n-1)
\right)
\to 
0
\end{align*}
\normalsize
is exact 
and
we have
\begin{equation*}
\HO^{n+2}_{Z}
\left(
X_{\et},
\mathbb{Z}(n-1)
\right)_{p}
=
0.
\end{equation*}
Therefore we have
\begin{equation*}
\HO^{n+1}_{\et}
\left(
X,
\mathbb{Z}(n-1)
\right)_{p}
=
\HO^{n+1}_{\et}
\left(
U,
\mathbb{Z}(n-1)
\right)_{p}
\end{equation*}
and the statement follows from Lemma \ref{co2HBet} 
and Proposition \ref{CaseGBEt}.
\end{proof}
\begin{thm}\upshape\label{GBmixex}
Let 
$A$ be a Dedekind domain of mixed-characteristic
and
$\mathfrak{X}$ an essentially smooth scheme
over $\spec A$.

Then
\begin{equation}\label{HBPuri}
\HB^{n}
\left(
\mathfrak{X}
\right)
=
\displaystyle
\bigcap_{x\in \mathfrak{X}^{(1)}}
\HB^{n}
\left(
\mathcal{O}_{\mathfrak{X}, x}
\right)
\end{equation}
and the sequence
\footnotesize
\begin{align}\label{mixGerF}
0
\to
\HB^{n}
\left(
\mathfrak{X}
\right)
\to
\operatorname{Ker}
\Bigl(
\HB^{n}
\left(
k
\left(
\mathfrak{X}
\right)
\right)
\to
\displaystyle
\prod_{x\in \mathfrak{X}^{(1)}}
\HB^{n}
\left(
k
\left(
\mathcal{O}_{\mathfrak{X}, \bar{x}}
\right)
\right)
\Bigr)
\to
\displaystyle
\bigoplus_{x\in \mathfrak{X}^{(1)}}
\HB^{n-1}
\left(
\kappa(x)
\right)
\end{align}
\normalsize
is exact.
\end{thm}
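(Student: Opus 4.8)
The plan is to prove both assertions one torsion-prime at a time and then reassemble them. First I would record that $\HB^{n}$ is a torsion functor: the reindexed form of (\ref{eptor}) reads $\HB^{n}(X)=\Gamma(X,\mathbf{R}^{n}\epsilon_{*}\mathbb{Q}/\mathbb{Z}(n-1))$, so every class has finite order. Both the intersection in (\ref{HBPuri}) and the kernel of the residue map in (\ref{mixGerF}) are formed inside the single ambient group $\HB^{n}(k(\mathfrak{X}))$ and are compatible with passage to $m$-torsion; since that ambient group is torsion, it suffices to prove the two statements for $\HB^{n}(\,\cdot\,)_{m}$ for every $m$ and then pass to the filtered union over $m$. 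By Proposition \ref{CaseGBEt} we have $\HB^{n}(\,\cdot\,)_{m}=\HO^{n}_{\et}(\,\cdot\,,\mathbb{Z}/m(n-1))$ on every essentially smooth local ring that occurs, which is the shape in which the preceding results can be applied.

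For (\ref{HBPuri}) the inclusion $\HB^{n}(\mathfrak{X})\subseteq\bigcap_{x}\HB^{n}(\mathcal{O}_{\mathfrak{X},x})$ is formal once the restriction maps are injective, and this injectivity is the reindexed Theorem \ref{mixinj}, which says that $\mathbf{R}^{n+1}\epsilon_{*}\mathbb{Z}(n-1)$ embeds into the direct image of its generic stalk, so that sections are detected generically. For the reverse inclusion I would use Corollary \ref{intmax} to replace $\mathfrak{X}$ by the local rings $R=\mathcal{O}_{\mathfrak{X},x}$ at its closed points, reducing the claim to the local statement $\HB^{n}(R)=\bigcap_{y}\HB^{n}(R_{y})$ taken over the height-one primes $y$ (the height-one primes through the various closed points exhaust $\mathfrak{X}^{(1)}$). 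A class lying in every $\HB^{n}(R_{y})$ glues, by generic injectivity, to a section over the open $U\subseteq\spec R$ obtained by deleting the points of codimension $\geq 2$; what remains is to extend it across that codimension-$\geq 2$ closed set, i.e. to show $\HB^{n}(\spec R)=\HB^{n}(U)$.

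Granting (\ref{HBPuri}), the exactness of (\ref{mixGerF}) is then bookkeeping built on the discrete-valuation-ring computation. For a codimension-one point $y$ the ring $R_{y}=\mathcal{O}_{\mathfrak{X},y}$ is a discrete valuation ring, and the reindexed Proposition \ref{supp} gives the exact sequence $0\to\HB^{n}(R_{y})\to\HB^{n}(k(\mathfrak{X}))'\to\HB^{n-1}(\kappa(y))$, where $\HB^{n}(k(\mathfrak{X}))'$ is exactly the subgroup of classes unramified at $y$, that is, killed in $\HB^{n}(k(\mathcal{O}_{\mathfrak{X},\bar y}))$; thus $\HB^{n}(R_{y})$ consists of the classes that are both unramified at $y$ and have vanishing residue there. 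The middle group of (\ref{mixGerF}) is by definition $\bigcap_{y}\{\text{unramified at }y\}$, a class in it ramifies along only finitely many $y$ so the residues assemble into a map to $\bigoplus_{y}\HB^{n-1}(\kappa(y))$, and the kernel of that map is $\bigcap_{y}\HB^{n}(R_{y})$, which equals $\HB^{n}(\mathfrak{X})$ by (\ref{HBPuri}). Injectivity of $\HB^{n}(\mathfrak{X})$ into the middle group follows from Theorem \ref{mixinj} together with the fact, read off from Proposition \ref{supp}, that classes coming from $\mathfrak{X}$ are unramified at every $y$. This gives the exactness of (\ref{mixGerF}).

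The step I expect to be the main obstacle is the codimension-two purity $\HB^{n}(\spec R)=\HB^{n}(U)$ inside (\ref{HBPuri}). Proposition \ref{HBco2P} provides it only for the removal of a single regular codimension-two subscheme supported in characteristic $p$, whereas for a general local ring $R$ of dimension $\geq 2$ the deleted locus may be singular and may meet both the special and the generic fibre. The plan is to induct on $\dim R$ and work at the generic points of the codimension-two strata, where after localization the stratum becomes the regular closed point of a two-dimensional regular local ring; the characteristic-$p$ strata are then handled by Proposition \ref{HBco2P}, and the characteristic-zero strata (lying in the generic fibre, essentially smooth over the fraction field of $A$) by the equi-characteristic purity contained in Proposition \ref{equiGer}. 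Patching these two inputs across all strata, which is precisely what the reduction to closed points via Corollary \ref{intmax} is meant to organize, is the delicate part of the argument.
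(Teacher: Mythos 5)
Your overall architecture matches the paper's: reduce (\ref{mixGerF}) to (\ref{HBPuri}) via Proposition \ref{supp}, use Corollary \ref{intmax} to localize at dimension-zero points, and recognize that everything hinges on extending classes across codimension $\geq 2$. But the one step you flag as ``the delicate part'' --- the purity $\HB^{n}(\spec R)=\HB^{n}(U)$ --- is exactly the content of the theorem beyond what the cited results already give, and your proposal does not actually prove it. Your plan of stratifying the codimension-two locus, treating characteristic-$p$ strata by Proposition \ref{HBco2P} and characteristic-zero strata by Proposition \ref{equiGer}, and then ``patching'' is left as a sketch; moreover it is set up to delete \emph{all} points of codimension $\geq 2$ at once, which is why the patching looks unmanageable. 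This is a genuine gap, not a presentational one.

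The paper closes this gap with a different and much lighter mechanism: induction on $\dim\mathfrak{X}$. For a closed point $x$ one removes a \emph{single} regular codimension-two subscheme $Z_{x}\subset\spec\mathcal{O}_{\mathfrak{X},x}$ lying in the special fibre; Proposition \ref{HBco2P} gives $\HB^{n}(\mathcal{O}_{\mathfrak{X},x})=\HB^{n}(U_{x})$ for that one removal. The point you are missing is that since $Z_{x}$ is closed and nonempty in the local scheme it contains the closed point, so every closed point of $U_{x}$ has codimension $\leq\dim\mathfrak{X}-1$; hence $U_{x}$ is an essentially smooth scheme of strictly smaller dimension with $U_{x}^{(1)}=(\spec\mathcal{O}_{\mathfrak{X},x})^{(1)}$, and the induction hypothesis --- the full statement (\ref{HBPuri}) in lower dimension, fed back through Corollary \ref{intmax} --- yields $\HB^{n}(U_{x})=\bigcap_{y\in\mathfrak{X}^{(1)},\,x\in\overline{\{y\}}}\HB^{n}(\mathcal{O}_{\mathfrak{X},y})$. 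No stratification of the codimension-two locus and no separate characteristic-zero purity are needed; the characteristic-zero side is already absorbed by Proposition \ref{equiGer} inside Corollary \ref{intmax}. If you replace your patching plan with this dimension induction, your argument becomes the paper's. (Your preliminary reduction to $m$-torsion is harmless but unnecessary; the paper argues integrally with $\HB^{n}$ throughout.)
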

\begin{proof}\upshape
By 
Proposition \ref{supp},
it suffices to prove the equation (\ref{HBPuri}).
We shall show the equation (\ref{HBPuri}) by induction
on $\operatorname{dim}(\mathfrak{X})$. In the case where
$\operatorname{dim}(\mathfrak{X})=1$, we can prove
the equations (\ref{HBPuri}) and (\ref{mixGerF}) by 
Proposition \ref{supp}
and Corollary \ref{intmax}.

Assume that the equation (\ref{HBPuri}) holds in the case 
where $\operatorname{dim}(\mathfrak{X})\leq a-1$.
Suppose that $\operatorname{dim}(\mathfrak{X})= a$.

Let 
$x\in \mathfrak{X}_{(0)}$ and
$i_{x}: Z_{x}\to \spec \mathcal{O}_{\mathfrak{X}, x}$ 
a regular closed subscheme of codimension $2$ with open
complement 
$j_{x}: U_{x}\to \spec \mathcal{O}_{\mathfrak{X}, x}$.
Then we have
\begin{equation*}
\HB^{n}
\left(
\mathcal{O}_{\mathfrak{X}, x}
\right)
=
\HB^{n}
\left(
U_{x}
\right)
\end{equation*}
by Proposition \ref{HBco2P}. Since 
\begin{math}
(\spec
\mathcal{O}_{\mathfrak{X}, x}
)^{(1)}
=
U^{(1)}_{x}
\end{math}
and
\begin{math}
\operatorname{dim}
(
U_{x}
)
=
\operatorname{dim}
(
\mathfrak{X}
)
-1,
\end{math}
we have
\begin{equation*}
\HB^{n}
\left(
U_{x}
\right)
=
\displaystyle
\bigcap_{
\substack
{
y\in \mathfrak{X}^{(1)} \\
x\in \Bar{\{y\}}
}
}
\HB^{n}
\left(
\mathcal{O}_{\mathfrak{X}, y}
\right)
\end{equation*}
by induction hypothesis. 
Therefore the equation (\ref{HBPuri}) follows from
Corollary \ref{intmax}
and the sequence (\ref{mixGerF}) is exact by 
Proposition \ref{supp}. 
This completes the proof.
\end{proof}
\begin{cor}\upshape\label{HenGer}
Let $R$ be a henselian local ring of a smooth
algebra over 
a discrete valuation ring of 
mixed-characteristic $(0,p)$.
Then the sequence
\begin{align*}
0
\to
\HO^{n+1}_{\et}
\left(
R,
\mathbb{Z}/p^{r}(n)
\right)  
\to&
\HO^{n+1}_{\et}
\left(
k(R),
\mathbb{Z}/p^{r}(n)
\right)^{\prime}  \\
\to& 
\displaystyle
\bigoplus_{
\substack
{
\mathfrak{p}\in \spec R  \\
\operatorname{ht}(\mathfrak{p})
=
1
}
}
\HO^{n}_{\et}
\left(
\kappa(\mathfrak{p}),
\mathbb{Z}/p^{r}(n-1)
\right)  
\end{align*}
is exact
for integers $n\geq 0$ and $r>0$
where     
\begin{align*}
&\HO^{n+1}_{\et}
\left(
k(R),
\mathbb{Z}/p^{r}(n)
\right)^{\prime}     \\
=
&\operatorname{Ker}
\left(
\HO^{n+1}_{\et}
\left(
k(R),
\mathbb{Z}/p^{r}(n)
\right)
\to
\displaystyle
\prod_{
\substack{
\mathfrak{p}\in \spec R  \\
\operatorname{ht}(\mathfrak{p})=1
}
}
\HO^{n+1}_{\et}
\left(
k(
R_{\bar{\mathfrak{p}}}
),
\mathbb{Z}/p^{r}(n)
\right)
\right)
\end{align*}
and
\begin{math}
R_{\bar{\mathfrak{p}}}
\end{math}
is the strictly henselization of
$R_{\mathfrak{p}}$.
\end{cor}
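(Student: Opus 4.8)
The plan is to obtain the corollary as the specialization of Theorem \ref{GBmixex} to $\mathfrak{X}=\spec R$, passing to $p^{r}$-torsion and rewriting generalized Brauer groups as \'etale motivic cohomology through Proposition \ref{CaseGBEt}. Since $R$ is a local ring of a smooth algebra over the mixed-characteristic discrete valuation ring $A$, the scheme $\spec R$ is essentially smooth over the Dedekind domain $\spec A$, so Theorem \ref{GBmixex}, applied with its index $n$ replaced by $n+1$, yields the exact sequence
\[ 0 \to \HB^{n+1}(\spec R) \to \operatorname{Ker}\Bigl( \HB^{n+1}(k(R)) \to \prod_{\mathfrak{p}} \HB^{n+1}(k(R_{\bar{\mathfrak{p}}})) \Bigr) \to \bigoplus_{\mathfrak{p}} \HB^{n}(\kappa(\mathfrak{p})), \]
where $\mathfrak{p}$ runs over the codimension-one points of $\spec R$, i.e.\ the height-one primes, and I have used $\mathcal{O}_{\mathfrak{X},\mathfrak{p}}=R_{\mathfrak{p}}$ together with $k(\mathcal{O}_{\mathfrak{X},\bar{\mathfrak{p}}})=k(R_{\bar{\mathfrak{p}}})$.

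First I would apply the functor $(-)_{p^{r}}=\operatorname{Hom}(\mathbb{Z}/p^{r},-)$ to this sequence. This functor is left exact, hence commutes with kernels and preserves the exactness of a sequence of the shape $0\to A\to B\to C$; it also commutes with the direct sum appearing on the right. Moreover the $p^{r}$-torsion of a kernel $\operatorname{Ker}(f)$ coincides with the kernel of the restriction of $f$ to $p^{r}$-torsion subgroups, so the middle term becomes
\[ \operatorname{Ker}\Bigl( \HB^{n+1}(k(R))_{p^{r}} \to \prod_{\mathfrak{p}} \HB^{n+1}(k(R_{\bar{\mathfrak{p}}}))_{p^{r}} \Bigr). \]
Next I would rewrite every term using the identity (\ref{GBTm}) of Proposition \ref{CaseGBEt}, which gives $\HB^{j}(\spec S)_{p^{r}}=\HO^{j}_{\et}(S,\mathbb{Z}/p^{r}(j-1))$ for $S$ a local ring of a smooth algebra over a Dedekind domain. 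Taking $j=n+1$ and $S\in\{R,\,k(R),\,k(R_{\bar{\mathfrak{p}}})\}$ turns the first two terms into $\HO^{n+1}_{\et}(R,\mathbb{Z}/p^{r}(n))$ and $\HO^{n+1}_{\et}(k(R),\mathbb{Z}/p^{r}(n))^{\prime}$, while $j=n$ with $S=\kappa(\mathfrak{p})$ turns the last term into $\bigoplus_{\mathfrak{p}}\HO^{n}_{\et}(\kappa(\mathfrak{p}),\mathbb{Z}/p^{r}(n-1))$. Assembling these identifications produces exactly the asserted exact sequence.

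The one point deserving care is checking that Proposition \ref{CaseGBEt}, hence (\ref{GBTm}), legitimately applies to each field occurring as a term: $k(R)$, the residue fields $\kappa(\mathfrak{p})$ (of characteristic $p$ when $\mathfrak{p}$ lies in the closed fibre), and the fraction fields $k(R_{\bar{\mathfrak{p}}})$ of the strict henselizations. Each of these is essentially smooth over $\spec A$ and is moreover a filtered colimit of local rings of smooth algebras over $A$ --- a strict henselization being a filtered colimit of \'etale neighbourhoods, the fraction field $k(R)$ the colimit of the localizations $R_f$, and a codimension-one residue field being essentially of finite type over the residue or fraction field of $A$. Since both sides of (\ref{GBTm}) commute with such filtered colimits (\'etale cohomology with finite coefficients and the generalized Brauer group being defined through it), the identity descends to these fields; for a field one moreover has $\HO^{i}_{\Zar}(-,\mathbb{Z}(w))=0$ for $i>w$, so the Zariski-vanishing hypothesis of Proposition \ref{CaseGBEt} is automatic.

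Once this bookkeeping is in place the henselian hypothesis on $R$ plays no further role, the corollary being merely the restriction of Theorem \ref{GBmixex} together with Proposition \ref{CaseGBEt} to the case $\mathfrak{X}=\spec R$; accordingly I anticipate no substantive obstacle beyond verifying that the torsion functor and the motivic-cohomology identifications interact correctly with the three types of field that appear.
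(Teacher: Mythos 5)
Your overall strategy---specialize Theorem \ref{GBmixex} to $\spec R$, pass to $p^{r}$-torsion, and translate every term into \'etale motivic cohomology via (\ref{GBTm}) of Proposition \ref{CaseGBEt}---is the same as the paper's. But there is a gap at the very first step: you assert that ``since $R$ is a local ring of a smooth algebra over $A$, the scheme $\spec R$ is essentially smooth over the Dedekind domain $\spec A$,'' and later that ``the henselian hypothesis on $R$ plays no further role.'' Here $R$ is the \emph{henselization} of a local ring of a smooth algebra; it is not essentially of finite type over $A$, so $\spec R$ is not essentially smooth over $\spec A$ and neither Theorem \ref{GBmixex} nor Proposition \ref{CaseGBEt} applies to it directly. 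The henselian hypothesis is precisely the obstruction, and dealing with it is the entire content of the paper's proof of this corollary: one writes $R=\varinjlim_{i} A_{i}$ as a filtered colimit of $A$-algebras essentially of finite type with \'etale transition maps, notes that $\HB^{n}$ commutes with such colimits by \cite[pp.88--89, III, Lemma 1.16]{M}, applies Theorem \ref{GBmixex} and Proposition \ref{CaseGBEt} to each $A_{i}$, and passes to the limit (checking as well that the fraction fields, the height-one primes, their strict henselizations, and their residue fields are compatible with the colimit).

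The irony is that you invoke exactly this filtered-colimit technique in the second half of your proposal, but only to justify applying (\ref{GBTm}) to the auxiliary fields $k(R)$, $\kappa(\mathfrak{p})$ and $k(R_{\bar{\mathfrak{p}}})$ --- where, for fields, the vanishing of $\HO^{i}_{\Zar}(-,\mathbb{Z}(w))$ for $i>w$ makes the verification comparatively easy --- and not to $R$ itself, which is the one term where the limit argument is indispensable. The torsion bookkeeping (left exactness of $(-)_{p^{r}}$, commutation with kernels and direct sums, the index shift $j=n+1$) is fine. To repair the proof, insert the colimit presentation of $R$ at the outset, apply Theorem \ref{GBmixex} and Proposition \ref{CaseGBEt} to the approximating essentially smooth schemes, and then take the colimit of the resulting exact sequences (filtered colimits being exact), rather than applying those results to $\spec R$ directly.
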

\begin{proof}\upshape
Let $A$ be a regular local ring with 
$\operatorname{dim}(A)\leq 1$ and
\begin{equation*}
R^{\prime}
=
\displaystyle\lim_{\substack{\to\\i\in I}}
A_{i}
\end{equation*}
where $A_{i}$
are $A$-algebras 
essentially of finite type over $A$
and
the maps $A_{i}\to A_{j}$ are \'{e}tale.

Then we have
\begin{equation*}
\HB^{n}(R^{\prime})
=
\displaystyle\lim_{\substack{\to\\i\in I}}
\HB^{n}(A_{i})
\end{equation*}
by \cite[pp.88--89, III, Lemma 1.16]{M}.
Therefore the statement follows from 
Theorem \ref{GBmixex} and Proposition \ref{CaseGBEt}.
\end{proof}
\section{\'{E}tale motivic cohomology of henselian regular local rings}
\label{Rigid}

\subsection{The equi-characteristic case}

The objective of this subsection is to prove Theorem \ref{Ihenmix} 
in the case 
where $R$ is a henselian local ring of a smooth algebra over 
a field of characteristic $p>0$
(Theorem \ref{hendR}). 
\begin{lem}\upshape\label{dRex}
Let $A$ be a regular local ring over $\mathbb{F}_{p}$.
Let $t$ be a regular element of $A$, $\mathfrak{n}=(t)$ and
$B=A/\mathfrak{n}$. Then we have an exact sequence
\begin{equation*}
0
\to
(
\mathfrak{n}\Omega^{i}_{A}+d\Omega^{i-1}_{A}
)
/d\Omega^{i-1}_{A}
\to
\Omega^{i}_{A}/d\Omega^{i-1}_{A}
\xrightarrow{\bar{g}_{i}}
\Omega^{i}_{B}/d\Omega^{i-1}_{B}
\to
0
\end{equation*}
where the homomorphism $\bar{g}_{i}$ is induced by the natural homomorphism
\begin{math}
g_{i}: \Omega^{i}_{A}
\xrightarrow{}
\Omega^{i}_{B} 
. 
\end{math}
\end{lem}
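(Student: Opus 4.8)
The plan is to realize the claimed short exact sequence as the kernel–cokernel decomposition of the induced map $\bar g_i$, the only genuine content being the computation of $\ker(\bar g_i)$. First I would recall the standard description of the kernel of the natural surjection $g_\bullet\colon \Omega^\bullet_A \to \Omega^\bullet_B$ of differential graded algebras: since $B = A/\mathfrak{n}$, this kernel is the differential graded ideal generated by $\mathfrak{n}$, so that in degree $i$ one has $\ker(g_i) = \mathfrak{n}\Omega^i_A + d\mathfrak{n}\wedge\Omega^{i-1}_A$. Because $\mathfrak{n} = (t)$ is principal, $d\mathfrak{n}$ is spanned by the elements $d(at) = a\,dt + t\,da$, and the $A$-module generated by $d\mathfrak{n}\wedge\Omega^{i-1}_A$ together with $\mathfrak{n}\Omega^i_A$ reduces to $\ker(g_i) = \mathfrak{n}\Omega^i_A + dt\wedge\Omega^{i-1}_A$; I would record this reduction explicitly, as it is exactly what makes the later cancellation go through.

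Next I would check that $\bar g_i$ is surjective. Since $g_i$ is surjective and $g_\bullet$ is a chain map, one has $g_i(d\Omega^{i-1}_A) = d\,g_{i-1}(\Omega^{i-1}_A) = d\Omega^{i-1}_B$, using surjectivity of $g_{i-1}$. Hence $\bar g_i$ is well defined and onto, which gives exactness at the right-hand term and lets me restrict attention to identifying its kernel.

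The heart of the argument is the identification of $\ker(\bar g_i)$ with $(\mathfrak{n}\Omega^i_A + d\Omega^{i-1}_A)/d\Omega^{i-1}_A$. For the inclusion $\supseteq$, I note that $g_i(\mathfrak{n}\Omega^i_A) = 0$ in $\Omega^i_B$ and $g_i(d\Omega^{i-1}_A) = d\Omega^{i-1}_B$, so both generating summands map into $d\Omega^{i-1}_B$. For the reverse inclusion, I would take $\omega$ with $g_i(\omega) \in d\Omega^{i-1}_B = g_i(d\Omega^{i-1}_A)$; then $\omega \in d\Omega^{i-1}_A + \ker(g_i) = d\Omega^{i-1}_A + \mathfrak{n}\Omega^i_A + dt\wedge\Omega^{i-1}_A$, and it remains only to absorb the last summand.

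The step I expect to be the main obstacle---though it is one line once seen---is showing $dt\wedge\Omega^{i-1}_A \subseteq \mathfrak{n}\Omega^i_A + d\Omega^{i-1}_A$. This is precisely the Leibniz identity: for $\alpha \in \Omega^{i-1}_A$ one writes $dt\wedge\alpha = d(t\alpha) - t\,d\alpha$, whose first term lies in $d\Omega^{i-1}_A$ and whose second lies in $t\,\Omega^i_A = \mathfrak{n}\Omega^i_A$. With this, the two descriptions of $\ker(\bar g_i)$ coincide and the short exact sequence follows. I would also remark that the argument uses only that $\mathfrak{n}$ is principal together with the de Rham differential, so that the hypotheses that $A$ be regular local and $t$ be a regular element are not essential to this particular exactness and are retained only to match the ambient setting of the following results.
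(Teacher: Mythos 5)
Your proof is correct, and it reaches the same decisive identity as the paper --- namely that for $a\in\mathfrak{n}$ one has $da\wedge\alpha=d(a\alpha)-a\,d\alpha\in d\Omega^{i-1}_{A}+\mathfrak{n}\Omega^{i}_{A}$, which is what absorbs the ``extra'' part of $\operatorname{Ker}(g_{i})$ into the claimed subobject --- but you arrive at the description of $\operatorname{Ker}(g_{i})$ by a noticeably more elementary route. The paper invokes Popescu's theorem to write $A$ as a filtered colimit of smooth $\mathbb{F}_{p}$-algebras, arranges for $B_{\lambda}$ to be $0$-smooth so that the conormal sequence splits (Matsumura, Theorem 25.2), and then uses the exterior-power exact sequence for split extensions (Matsumura, Theorem C.2) to identify $\operatorname{Ker}(\alpha_{i})=\operatorname{Im}(\delta_{i})$ before observing that $\operatorname{Im}(\delta_{i})$ consists of exact forms modulo $\mathfrak{n}\Omega^{i}_{A}$. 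You instead use only the right-exactness of the first fundamental sequence and the general fact that the kernel of $\Omega^{\bullet}_{A}\to\Omega^{\bullet}_{B}$ is the differential graded ideal generated by $\mathfrak{n}$, i.e.\ $\operatorname{Ker}(g_{i})=\mathfrak{n}\Omega^{i}_{A}+d\mathfrak{n}\wedge\Omega^{i-1}_{A}$, which for a principal ideal reduces to $\mathfrak{n}\Omega^{i}_{A}+dt\wedge\Omega^{i-1}_{A}$; this needs no smoothness, no splitting, and no approximation theorem. What the paper's heavier machinery buys is the injectivity of $\delta_{i}$ and the precise left-hand term $\mathfrak{n}/\mathfrak{n}^{2}\otimes_{B}\Omega^{i-1}_{B}$ of the sequence (\ref{deRhamex}), but neither is actually used for the containment $\operatorname{Ker}(g_{i})\subset d\Omega^{i-1}_{A}+\mathfrak{n}\Omega^{i}_{A}$ that drives the lemma; your observation that the statement holds for an arbitrary surjection with principal (indeed, arbitrary) kernel, with the regularity hypotheses present only to match the ambient setting, is accordingly accurate.
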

\begin{proof}\upshape
$A$ can be written as a filtering inductive limit
\begin{math}
\displaystyle\lim_{\substack{\to\\ \lambda}}A_{\lambda} 
\end{math}
of finitely generated smooth algebras over $\mathbb{F}_{p}$
by Popescu's theorem (\cite{Po}). Let 
$\mathfrak{n}_{\lambda}$ be an ideal $(t)$ of $A_{\lambda}$ and 
$B_{\lambda}=A_{\lambda}/(t)$. 
Then we may assume that $B_{\lambda}$ is $0$-smooth over 
$\mathbb{F}_{p}$.

By
\cite[p.194, Theorem 25.2]{Ma}, 
we have a split exact sequence
\begin{equation*}
0
\to
\mathfrak{n}_{\lambda}/\mathfrak{n}_{\lambda}^{2}
\xrightarrow{\delta_{1}}
\Omega_{A_{\lambda}}\otimes_{A_{\lambda}} B_{\lambda}
\xrightarrow{\alpha_{1}}
\Omega_{B_{\lambda}}
\to 
0
\end{equation*}
where the maps are given by 
\begin{equation*}
\delta_{1}\left(
\bar{a}
\right)
=
da\otimes 1
\end{equation*}
and
\begin{equation*}
\alpha_{1}(db\otimes \bar{c})
=
\bar{c}d\bar{b}
\end{equation*}
for $a\in \mathfrak{n}_{\lambda}$ and $b, c\in A_{\lambda}$.
Let $\gamma_{1}$ be a section of $\alpha_{1}$. Then we have
\begin{equation*}
\gamma_{1}(\bar{c}d\bar{b})-db\otimes \bar{c}
\in \operatorname{Im}(\delta_{1})
\end{equation*}
for $b, c\in A_{\lambda}$.
Therefore we have an exact sequence
\begin{equation*}
0
\to
\mathfrak{n}_{\lambda}/\mathfrak{n}_{\lambda}^{2}
\otimes_{B_{\lambda}}\Omega^{i-1}_{B_{\lambda}}
\xrightarrow{\delta_{i}}
\Omega^{i}_{A_{\lambda}}\otimes_{A_{\lambda}} B_{\lambda}
\xrightarrow{\alpha_{i}}
\Omega^{i}_{B_{\lambda}}
\to 
0
\end{equation*}
by \cite[p.284, Theorem C.2]{Ma}.
Here
\begin{align*}
\delta_{i} 
\left(
\bar{a}\otimes
(
d\bar{b}_{1}
\wedge\cdots\wedge
d\bar{b}_{i-1}
)
\right)
=&
\left(
da
\wedge
db_{1}
\wedge\cdots\wedge
db_{i-1}
\right)
\otimes 
\bar{1}
\end{align*}
for $a\in \mathfrak{n}_{\lambda}$, 
$b_{1},\cdots, b_{i-1}\in A_{\lambda}$
and
\begin{equation*}
\alpha_{i}
\left(
(
dc_{1}
\wedge\cdots\wedge
dc_{i}
)
\otimes
\bar{f}
\right)
=
\bar{f}
d\bar{c}_{1}
\wedge\cdots\wedge
d\bar{c}_{i}
\end{equation*}
for
\begin{math}
c_{1}, \cdots, c_{i}, f\in A_{\lambda}.
\end{math}
Hence the sequence
\begin{equation}\label{deRhamex}
0
\to
\mathfrak{n}/\mathfrak{n}^{2}
\otimes_{B}\Omega^{i-1}_{B}
\xrightarrow{\delta_{i}}
\Omega^{i}_{A}\otimes_{A} B
\xrightarrow{\alpha_{i}}
\Omega^{i}_{B}
\to 
0
\end{equation}
is exact.

On the other hand, we have
\begin{align*}
\operatorname{Ker}\left(
\alpha_{i}
\right)
=
\operatorname{Im}\left(
\delta_{i}
\right)
\subset
\left(
d\Omega^{i-1}_{A}+\mathfrak{n}\Omega^{i}_{A}
\right)
/
\mathfrak{n}\Omega^{i}_{A}
\end{align*}
by the sequence (\ref{deRhamex}). So we have
\begin{equation*}
\operatorname{Ker}\left(
g_{i}
\right)
\subset
d\Omega^{i-1}_{A}+\mathfrak{n}\Omega^{i}_{A}.
\end{equation*}
Since 
\begin{equation*}
d\Omega^{i-1}_{B}
=
\operatorname{Im}
\left(
d\Omega^{i-1}_{A}
\to
\Omega^{i}_{A}
\xrightarrow{g_{i}}
\Omega^{i}_{B} 
\right),
\end{equation*}
we have
\begin{equation*}
\operatorname{Ker}\left(
\bar{g}_{i}
\right)
\subset
\left(
d\Omega^{i-1}_{A}+\mathfrak{n}\Omega^{i}_{A}
\right)
/d\Omega^{i-1}_{A}.
\end{equation*}
Therefore the statement follows.
\end{proof}
\begin{lem}\upshape\label{homV}
Let $A$ be a 
regular local ring 
and
$X=\spec A$. Suppose that $\operatorname{char}(A)=p>0$. 
Then
\begin{equation*}
\operatorname{H}^{j}_{\et}
\left(
X,
Z\Omega^{i}_{X}
\right)
=
\operatorname{H}^{j}_{\et}
\left(
X, 
d\Omega^{i}_{X}
\right)
=
0
\end{equation*}
for all $j, i>0$.
Here 
\begin{math}
Z\Omega^{i}_{X}
=
\operatorname{Ker}
\left(
d: \Omega^{i}_{X}\to\Omega^{i+1}_{X}
\right).
\end{math}
\end{lem}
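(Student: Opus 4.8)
The plan is to reduce the assertion to the vanishing of higher Zariski cohomology of a quasi-coherent sheaf on the affine scheme $X=\spec A$, the only subtlety being that $Z\Omega^{i}_X$ and $d\Omega^{i}_X$ are not $\mathcal{O}_X$-submodules of $\Omega^{i}_X$ for the usual module structure, since $d$ is merely additive. The key observation is that in characteristic $p$ one has $d(a^{p}\omega)=pa^{p-1}\,da\wedge\omega+a^{p}\,d\omega=a^{p}\,d\omega$, so that $d$ is linear for the module structure in which $\mathcal{O}_X$ acts through the Frobenius $a\mapsto a^{p}$; equivalently $d\colon \Omega^{i}_X\to\Omega^{i+1}_X$ is $\mathcal{O}_X^{p}$-linear. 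I would therefore regard $\Omega^{\bullet}_X$ as a complex of $\mathcal{O}_X$-modules via Frobenius (i.e. replace it by $F_{*}\Omega^{\bullet}_X$ for the absolute Frobenius $F$), with $\mathcal{O}_X$-linear differentials.

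Granting this, the first step is to show that $Z\Omega^{i}_X=\operatorname{Ker}d$ and $d\Omega^{i}_X=\operatorname{Im}d$ are quasi-coherent $\mathcal{O}_X$-modules. Indeed, with the Frobenius-twisted structure they are the kernel and the image of an $\mathcal{O}_X$-linear map between the quasi-coherent sheaves $\Omega^{i}_X$ and $\Omega^{i+1}_X$, hence are quasi-coherent; on $X=\spec A$ they are simply the $A$-submodules $Z\Omega^{i}_A$ and $d\Omega^{i}_A$ of $\Omega^{i}_A$, where $A$ acts by $a\cdot\omega=a^{p}\omega$. One then checks that, as \'{e}tale sheaves, $Z\Omega^{i}_X$ and $d\Omega^{i}_X$ coincide with the \'{e}tale sheaves associated to these quasi-coherent Zariski sheaves: this is because $\Omega^{i}$ commutes with \'{e}tale localization and $\epsilon^{*}$ is exact, so that $\epsilon^{*}\operatorname{Ker}(d_{\Zar})=\operatorname{Ker}(d_{\et})$ and $\epsilon^{*}\operatorname{Im}(d_{\Zar})=\operatorname{Im}(d_{\et})$.

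The second step is the cohomological conclusion. Since $X$ is affine, the higher Zariski cohomology of any quasi-coherent sheaf vanishes; and for a quasi-coherent sheaf the \'{e}tale cohomology agrees with the Zariski cohomology (cf. \cite{M}). Combining these with the identification of the previous step gives
\begin{equation*}
\HO^{j}_{\et}\left(X,Z\Omega^{i}_X\right)=\HO^{j}_{\Zar}\left(X,Z\Omega^{i}_X\right)=0,
\qquad
\HO^{j}_{\et}\left(X,d\Omega^{i}_X\right)=\HO^{j}_{\Zar}\left(X,d\Omega^{i}_X\right)=0
\end{equation*}
for all $j>0$ and all $i$, which in particular yields the stated vanishing for $i,j>0$. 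I would remark that neither henselianness nor regularity of $A$ is needed for this lemma; these hypotheses enter only through the surrounding results.

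The step I expect to require the most care is the identification of the \'{e}tale sheaves $Z\Omega^{i}_X$ and $d\Omega^{i}_X$ with the \'{e}tale sheafifications of the Frobenius-twisted quasi-coherent modules, i.e. verifying that forming kernels and images of $d$ commutes with passage from the Zariski to the \'{e}tale site. Once this compatibility is in place the rest is formal. Alternatively, one could deduce quasi-coherence from the Cartier exact sequences $0\to Z\Omega^{i}_X\to\Omega^{i}_X\xrightarrow{d}d\Omega^{i}_X\to 0$ and $0\to d\Omega^{i-1}_X\to Z\Omega^{i}_X\xrightarrow{C}\Omega^{i}_X\to 0$, which hold here by writing $A$ as a filtered limit of smooth $\mathbb{F}_{p}$-algebras as in Lemma \ref{dRex}; but the direct Frobenius-linearity argument is more economical.
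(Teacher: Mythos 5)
Your proof is correct, but it takes a genuinely different route from the paper's. The paper's argument only uses quasi-coherence for $\Omega^{i}_{X}$ itself; it then bounds $\operatorname{H}^{\geq 2}_{\et}(X,Z\Omega^{i}_{X})$ via the exact sequence of \'{e}tale sheaves involving $\Omega^{i}_{X,\log}$ (Shiho, Prop.~2.8/2.10), where henselianness enters through rigidity to the residue field $k$ together with $\operatorname{cd}_{p}(k)\leq 1$; it then deduces $\operatorname{H}^{\geq 1}_{\et}(X,d\Omega^{i}_{X})=0$ from the tautological sequence $0\to Z\Omega^{i}_{X}\to\Omega^{i}_{X}\to d\Omega^{i}_{X}\to 0$, and finally $\operatorname{H}^{\geq 1}_{\et}(X,Z\Omega^{i}_{X})=0$ from the Cartier sequence $0\to d\Omega^{i-1}_{X}\to Z\Omega^{i}_{X}\to\Omega^{i}_{X}\to 0$ (Shiho, Prop.~2.5), which needs regularity. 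Your observation that $d$ is $\mathcal{O}_{X}^{p}$-linear, so that $Z\Omega^{i}_{X}$ and $d\Omega^{i}_{X}$ are quasi-coherent for the Frobenius-twisted module structure, short-circuits all of this and, as you note, dispenses with both henselianness and regularity; what it buys is a more elementary and more general statement, at the price of not interacting with the logarithmic sheaves that the surrounding results actually need. Two points in your write-up deserve care. First, the functor that must be exact is not the abelian-sheaf pullback $\epsilon^{*}$ (which does not send $\Omega^{i}_{X_{\Zar}}$ to $\Omega^{i}_{X_{\et}}$) but the module pullback $\epsilon^{*}(-)\otimes_{\epsilon^{*}\mathcal{O}_{X_{\Zar}}}\mathcal{O}_{X_{\et}}$, whose exactness on quasi-coherent sheaves comes from flatness of \'{e}tale morphisms. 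Second, to invoke Milne's comparison of \'{e}tale and Zariski cohomology you must identify $Z\Omega^{i}_{X_{\et}}$ and $d\Omega^{i}_{X_{\et}}$ with the \'{e}tale sheafifications of the twisted modules $F_{*}Z\Omega^{i}_{A}$ and $F_{*}d\Omega^{i}_{A}$, i.e.\ check that $Z\Omega^{i}_{B}\cong(F_{A*}Z\Omega^{i}_{A})\otimes_{A}B$ for \'{e}tale $A\to B$; this follows from flatness of $B$ over $A$ together with the fact that the relative Frobenius of an \'{e}tale morphism is an isomorphism (equivalently, the Frobenius square is cocartesian), which yields $(F_{A*}M)\otimes_{A}B\cong F_{B*}(M\otimes_{A}B)$ for every $A$-module $M$. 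You correctly flag this as the delicate step, and it does go through.
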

\begin{proof}\upshape
$Z\Omega^{i}_{X}$ and $d\Omega^{i}_{X}$
are locally free $\mathcal{O}_{X}$-modules
after twisting with Frobenius.
Hence
we have
\begin{equation*}
\operatorname{H}^{j}_{\Zar}
\left(
X,
Z\Omega^{i}_{X}
\right)
=
\operatorname{H}^{j}_{\Zar}
\left(
X,
d\Omega^{i}_{X}
\right)
=
0
\end{equation*}
for all $j>0$ by \cite[p.103, III, Lemma 2.15]{M}. 
On the other hand,
\begin{align*}
 \Gamma(U, Z\Omega^{i}_{X}\otimes_{\mathcal{O}_{X}}U)
 =
 Z\Omega^{i}_{U}
 ~\textrm{and}~
 \Gamma(U, d\Omega^{i}_{X}\otimes_{\mathcal{O}_{X}}U)
 =
 d\Omega^{i}_{U}
\end{align*}
for $U/X$ \'{e}tale 
by \cite[p.48, II, Proposition 1.3]{M}
and \cite[p.574, Proposition 2.5]{Sh}.
Therefore
we have
\begin{equation*}
\operatorname{H}^{j}_{\et}
\left(
X,
Z\Omega^{i}_{X}
\right)
=
\operatorname{H}^{j}_{\et}
\left(
X,
d\Omega^{i}_{X}
\right)
=
0 
\end{equation*}
by  
\cite[p.114, III, Remark 3.8]{M}. 
This completes the proof.
\end{proof}
\begin{thm}\upshape
\label{hendR}
Let $A$ be a henselian regular local ring
over $\mathbb{F}_{p}$ and $k$ the residue field
of $A$. Then the homomorphism
\begin{equation}\label{rigpm}
\operatorname{H}^{1}_{\et}
\left(
\spec A, 
\Omega^{i}_{A, \log}
\right)
\to
\operatorname{H}^{1}_{\et}
\left(
\spec k, 
\Omega^{i}_{k, \log}
\right)
\end{equation}
is an isomorphism.

Suppose that $A$ is a henselian local of smooth algebra
over
a field of characteristic $p>0$.
Then we have an isomorphism
\begin{equation*}
\HO^{i+1}_{\et}
\left(
A,
\mathbb{Z}/p(i)
\right)
\xrightarrow{\sim}
\HO^{i+1}_{\et}
\left(
k,
\mathbb{Z}/p(i)
\right)
\end{equation*}
by the isomorphism (\ref{rigpm}) and \cite[p.787, \S 5, (12)]{Ge}.
 
\end{thm}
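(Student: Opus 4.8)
The plan is to translate the cohomology of the logarithmic sheaf into a Cartier cokernel and then to run a dévissage down the regular parameters. First I would use the Artin--Schreier--Cartier exact sequence of \'etale sheaves
\begin{equation*}
0 \to \Omega^{i}_{X,\log} \to \Omega^{i}_{X} \xrightarrow{\,C^{-1}-1\,} \Omega^{i}_{X}/d\Omega^{i-1}_{X} \to 0,
\end{equation*}
in which $C^{-1}$ denotes the inverse Cartier operator and $1$ the canonical projection; its exactness and the identification of the kernel with $\Omega^{i}_{X,\log}$ are standard over a scheme of characteristic $p$. Taking $X=\spec A$ and invoking the vanishing $\HO^{1}_{\et}(X,\Omega^{i}_{X})=0$ proved inside Lemma \ref{homV}, together with $\HO^{1}_{\et}(X,d\Omega^{i-1}_{X})=0$ from Lemma \ref{homV}, the associated long exact sequence degenerates to a natural isomorphism
\begin{equation*}
\HO^{1}_{\et}(\spec A,\Omega^{i}_{A,\log}) \xrightarrow{\ \sim\ } \operatorname{coker}\bigl(\Omega^{i}_{A} \xrightarrow{C^{-1}-1} \Omega^{i}_{A}/d\Omega^{i-1}_{A}\bigr),
\end{equation*}
and likewise for the residue field $k$ (a field of characteristic $p$, to which Lemma \ref{homV} applies as a zero-dimensional henselian regular local ring). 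By naturality the map (\ref{rigpm}) becomes the map on these cokernels induced by the reduction $g_{i}\colon\Omega^{i}_{A}\to\Omega^{i}_{k}$. (The lowest index $i=1$ is not covered by Lemma \ref{homV} and I would treat it separately, where the statement is classical.)

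Next I would set up the dévissage. Choosing a regular system of parameters $t_{1},\dots,t_{d}$ of $A$ exhibits $k$ as the iterated quotient $A/(t_{1},\dots,t_{d})$, and every $A/(t_{1},\dots,t_{s})$ is again a henselian regular local $\mathbb{F}_{p}$-algebra with residue field $k$. Hence, by induction on $\dim A$, it is enough to treat one step $A\to B=A/(t)$ with $t$ a regular parameter and to show that the induced map
\begin{equation*}
\operatorname{coker}\bigl(\Omega^{i}_{A}\xrightarrow{C^{-1}-1}\Omega^{i}_{A}/d\Omega^{i-1}_{A}\bigr) \to \operatorname{coker}\bigl(\Omega^{i}_{B}\xrightarrow{C^{-1}-1}\Omega^{i}_{B}/d\Omega^{i-1}_{B}\bigr)
\end{equation*}
is an isomorphism. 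Here Lemma \ref{dRex} furnishes the short exact sequence
\begin{equation*}
0 \to K \to \Omega^{i}_{A}/d\Omega^{i-1}_{A} \xrightarrow{\bar g_{i}} \Omega^{i}_{B}/d\Omega^{i-1}_{B} \to 0, \qquad K=(\mathfrak{n}\Omega^{i}_{A}+d\Omega^{i-1}_{A})/d\Omega^{i-1}_{A},
\end{equation*}
which is compatible with $C^{-1}-1$ because the inverse Cartier operator is functorial. Since $g_{i}$ is surjective, a snake-lemma chase shows that the map on cokernels is surjective with kernel isomorphic to $K/(K\cap\operatorname{im}(C^{-1}-1))$. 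Thus the entire theorem reduces to the single containment $K\subseteq\operatorname{im}(C^{-1}-1)$, that is, to showing that the class in $\Omega^{i}_{A}/d\Omega^{i-1}_{A}$ of every element of $\mathfrak{n}\Omega^{i}_{A}=t\Omega^{i}_{A}$ lies in the image of $C^{-1}-1$.

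The containment $K\subseteq\operatorname{im}(C^{-1}-1)$ is the crux and the place where henselianity is indispensable. Concretely one must solve $(C^{-1}-1)\xi\equiv t\eta \pmod{d\Omega^{i-1}_{A}}$ for each $\eta\in\Omega^{i}_{A}$. Because $C^{-1}$ is Frobenius-semilinear it raises the $t$-adic order by a factor of $p$, so $(C^{-1})^{k}(t\eta)$ lies in $t^{p^{k}}\Omega^{i}_{A}$ modulo exact forms and the formal geometric series $\xi=-\sum_{k\ge 0}(C^{-1})^{k}(t\eta)$ is $t$-adically convergent; over the completion this solves the equation outright. Since $A$ is only henselian I would instead phrase the equation as an Artin--Schreier-type problem whose linear part is the invertible operator $-1$ and which admits the trivial solution modulo the maximal ideal, so that a genuine solution in $A$ is produced by the henselian property. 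This is the step I expect to be the main obstacle. Granting it, the cokernels coincide, the inductive step closes, and (\ref{rigpm}) is an isomorphism. The final assertion is then immediate from the quasi-isomorphism $\mathbb{Z}/p(i)_{\et}\simeq\Omega^{i}_{\log}[-i]$, which identifies $\HO^{i+1}_{\et}(-,\mathbb{Z}/p(i))$ with $\HO^{1}_{\et}(-,\Omega^{i}_{\log})$ and hence turns (\ref{rigpm}) into the claimed isomorphism.
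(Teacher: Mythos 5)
Your architecture matches the paper's proof almost exactly: present $\HO^{1}_{\et}(-,\Omega^{i}_{\log})$ as the cokernel of a Cartier--Frobenius-type operator on $\Omega^{i}/d\Omega^{i-1}$ using the vanishings of Lemma \ref{homV} (the paper does this via the exact row $\Omega^{i}_{A}\xrightarrow{1-F}\Omega^{i}_{A}/d\Omega^{i-1}_{A}\to\HO^{1}_{\et}(\spec A,\Omega^{i}_{A,\log})\to 0$, citing Shiho), reduce by dévissage to a single quotient $B=A/(t)$ by a regular element, invoke Lemma \ref{dRex} to identify the kernel of $\bar g_{i}$, and reduce everything to the containment of $(\mathfrak{n}\Omega^{i}_{A}+d\Omega^{i-1}_{A})/d\Omega^{i-1}_{A}$ in the image of the operator. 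You have correctly located the crux. The problem is that you leave precisely that crux unresolved, and neither of your two sketches for it works as stated. The geometric series $-\sum_{k\ge 0}(C^{-1})^{k}(t\eta)$ converges only $t$-adically, i.e.\ in the completion, and you give no descent argument back to the henselian ring. Your second suggestion --- ``phrase the equation as an Artin--Schreier-type problem \dots a genuine solution in $A$ is produced by the henselian property'' --- is not an argument: Hensel's lemma applies to a polynomial equation in one variable over $A$, whereas $(C^{-1}-1)\xi\equiv t\eta$ is an equation for an element of the (in general not finitely generated) $A$-module $\Omega^{i}_{A}$, twisted by Frobenius; henselianity does not act on such an equation directly.

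The missing idea, which is how the paper closes the gap, is to first rigidify the shape of the forms one must hit. By \cite[p.122, Lemma (4.2)]{B-K} the map $A\otimes(A^{*})^{\otimes i}\to\Omega^{i}_{A}$, $a\otimes b_{1}\otimes\cdots\otimes b_{i}\mapsto a\,\frac{db_{1}}{b_{1}}\wedge\cdots\wedge\frac{db_{i}}{b_{i}}$, is surjective, so every element of $\mathfrak{n}\Omega^{i}_{A}$ is a sum of terms $a\,\frac{db_{1}}{b_{1}}\wedge\cdots\wedge\frac{db_{i}}{b_{i}}$ with $a\in\mathfrak{n}=(t)$ and $b_{j}\in A^{*}$. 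On such a generator the operator acts purely on the coefficient, $F\bigl(a\,\frac{db_{1}}{b_{1}}\wedge\cdots\bigr)=a^{p}\,\frac{db_{1}}{b_{1}}\wedge\cdots$, so the module-level equation collapses to the scalar Artin--Schreier equation $b^{p}-b=a$ in the single unknown $b\in A$. \emph{This} is where henselianity legitimately enters: the separable polynomial $T^{p}-T-a$ has a root $b\in A$ reducing to a non-unit, and since $b(b^{p-1}-1)=a\in(t)$ with $(t)$ prime and $b^{p-1}-1$ a unit, in fact $b\in(t)=\mathfrak{n}$, so the preimage $b\,\frac{db_{1}}{b_{1}}\wedge\cdots$ lies in $\operatorname{Ker}(g_{i})$ as required for the diagram chase. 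Without the reduction to logarithmic generators your plan cannot be completed over the henselization, so as written the proof has a genuine gap at its decisive step.
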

\begin{proof}\upshape
Let $t$ a regular element of $A$ and $B=A/(t)$. 
Then it suffices to show that the homomorphism
\begin{equation}\label{pos}
\operatorname{H}^{1}_{\et}
\left(
\spec A, 
\Omega^{i}_{A, \log}
\right)
\to
\operatorname{H}^{1}_{\et}
\left(
\spec B, 
\Omega^{i}_{B, \log}
\right)
\end{equation}
is an isomorphism.

We have the following commutative diagram.
\begin{equation}\label{dR}
\begin{CD}
\operatorname{Ker}(g_{i})
@.\to
(
\mathfrak{n}\Omega^{i}_{A}+d\Omega^{i-1}_{A}
)
/d\Omega^{i-1}_{A}
\\
@VVV @VVV\\
\Omega^{i}_{A}
@.\xrightarrow{~~~1-F~~~}
\Omega^{i}_{A}/d\Omega^{i-1}_{A}
@.\xrightarrow{~~~}
\operatorname{H}^{1}_{\et}
\left(
\spec A, 
\Omega^{i}_{A, \log}
\right)
@.\to
0\\
@V{g_{i}}VV @VVV @VVV \\
\Omega^{i}_{B}
@.\xrightarrow{~~~1-F~~~}
\Omega^{i}_{B}/d\Omega^{i-1}_{B}
@.\xrightarrow{~~~}
\operatorname{H}^{1}_{\et}
\left(
\spec B, 
\Omega^{i}_{B, \log}
\right)    \\
@VVV \\
0
\end{CD}
\end{equation}
where $F$ is the homomorphism which is induced by the Frobenius operator and $\mathfrak{n}=(t)$. 

Then the horizontal arrows in (\ref{dR}) are exact by 
\cite[p.576, Proposition 2.8]{Sh} and Lemma \ref{homV}.
Moreover the vertical arrow in (\ref{dR}) is exact by Lemma \ref{dRex}. 

If the upper homomorphism in (\ref{dR})
is surjective, we can show that
the homomorphism (\ref{pos}) is injective
by chasing diagram (\ref{dR}). We have surjective homomorphism
\begin{align*}
A\otimes(A^{*})^{\otimes i}
&\to
\Omega^{i}_{A}  \\
a\otimes b_{1}\otimes \cdots \otimes b_{i}
&\mapsto
a\frac{db_{1}}{b_{1}}\wedge\cdots\wedge\frac{db_{i}}{b_{i}}
\end{align*}
by \cite[p.122, Lemma (4.2)]{B-K} and
\begin{equation*}
F\left(
a\frac{db_{1}}{b_{1}}\wedge\cdots\wedge\frac{db_{i}}{b_{i}}
\right)
=
a^{p}\frac{db_{1}}{b_{1}}\wedge\cdots\wedge\frac{db_{i}}{b_{i}}.
\end{equation*}
Since
\begin{equation*}
\mathfrak{n}\Omega^{i}_{A}
\subset
\operatorname{Ker}(g_{i}),
\end{equation*}
it suffices to show that for any $a\in \mathfrak{n}$
there exists a $b\in \mathfrak{n}$ such that
\begin{equation}\label{sol}
b^{p}-b=a. 
\end{equation}
By the definition of Henselian,
there exists a $b\in A\setminus A^{*}$ such that $b$ is a solution of the 
equation (\ref{sol}) and 
\begin{math}
b+1,\cdots, b+p-1\in A^{*} 
\end{math}
are also solutions of the equation (\ref{sol}).
Hence $b\in \mathfrak{n}$ by the equation (\ref{sol}). 
Therefore  the homomorphism (\ref{pos}) is injective.
Moreover the homomorphism (\ref{pos}) is surjective
by the diagram (\ref{dR}). 
This completes the proof.
\end{proof}
\subsection{The mixed-characteristic case}

In this subsection we show Theorem \ref{Ihenmix} 
in the case 
where $R$ is mixed-characteristic (Theorem \ref{henmix}).

Let $A$ be a mixed-characteristic henselian discrete valuation ring, $K$ its fraction field 
and $\pi$ a prime element of $A$. 

We consider the following diagram of schemes.
\begin{equation*}
\begin{CD}
\mathfrak{X}\otimes_{A}K
@>{j}>>
\mathfrak{X}
@<{i}<<
Y=\mathfrak{X}\otimes_{A}A/(\pi) \\
@VVV @VVV @VVV \\
\spec K
@>>> 
\spec A
@<<<
\spec A/(\pi)
\end{CD}
\end{equation*}
where the vertical arrows are smooth.

\vspace{2.0mm}

Suppose that
$\operatorname{char}(A/(\pi))=p>0$. Then
the filtration 
\begin{math}
\operatorname{U}^{m}
\operatorname{M}^{n}_{r}
\end{math}
of
\begin{equation*}
\operatorname{M}^{n}_{r}
=
i^{*}\mathbf{R}^{n}j_{*}
\mu_{p^{r}}^{\otimes n}
\end{equation*}
is defined and the structure of
\begin{math}
\operatorname{M}^{n}_{r}
\end{math}
is studied in \cite{B-K}.

\vspace{2.0mm}

In particular, the structure of
$\operatorname{M}^{n}_{1}$
is as follows.

\begin{thm}\upshape(\cite[p.112, Corollary (1.4.1)]{B-K})
\label{BKmix}
Let $e$ be the absolute ramification index of $K$ and
\begin{math}
e^{\prime}
=
\frac{ep}{p-1}.
\end{math}

Then the sheaf $\operatorname{M}^{n}_{1}$ has the following structure.
\begin{itemize}
\item[(i)] 
\begin{equation*}
\operatorname{M}^{n}_{1}/
\operatorname{U}^{1}\operatorname{M}^{n}_{1}
\simeq
\Omega^{n}_{Y, \log}
\oplus
\Omega^{n-1}_{Y, \log}.
\end{equation*}
\item[(ii)]
If $1\leq m < e^{\prime}$ and $m$ is prime to $p$,
\begin{equation*}
\operatorname{U}^{m}
\operatorname{M}^{n}_{1}/
\operatorname{U}^{m+1}
\operatorname{M}^{n}_{1}
\simeq
\Omega^{n-1}_{Y}.
\end{equation*}
\item[(iii)] If $1\leq m <e^{\prime}$ and $p|m$,
\begin{align*}
\operatorname{U}^{m}
\operatorname{M}^{n}_{1}/
\operatorname{U}^{m+1}
\operatorname{M}^{n}_{1}
\simeq
&
\operatorname{B}^{n-1}_{1}
\oplus
\operatorname{B}^{n-2}_{1} 
\end{align*}
where
\begin{equation*}
\operatorname{B}^{q}_{1}
=
\operatorname{Image}
\left(
d:
\Omega^{q-1}_{Y}
\to
\Omega^{q}_{Y}
\right)
\end{equation*}
for an integer $q$.
\item[(iv)] For $m\geq e^{\prime}$,
\begin{equation*}
\operatorname{U}^{m}
\operatorname{M}^{n}_{1}
=
0.
\end{equation*}
\end{itemize}
\end{thm}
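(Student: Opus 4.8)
The plan is to deduce the structure of the sheaf $\operatorname{M}^{n}_{1}=i^{*}\mathbf{R}^{n}j_{*}\mu_{p}^{\otimes n}$ from a computation of Galois cohomology of the henselian local fields attached to the points of $Y$, and then to move the whole problem into Milnor $K$-theory, where the filtration $\operatorname{U}^{m}$ becomes the familiar unit filtration. First I would pass to \'{e}tale stalks: the assertion is an isomorphism of sheaves on $Y_{\et}$, so it suffices to verify it after taking stalks, where $\operatorname{M}^{n}_{1}$ becomes $\HO^{n}_{\et}(L,\mu_{p}^{\otimes n})$ for $L$ the fraction field of a henselian discrete valuation ring with residue field finitely generated over $\mathbb{F}_{p}$, and $\operatorname{U}^{m}\operatorname{M}^{n}_{1}$ becomes the subgroup generated by symbols $\{v,u_{1},\dots,u_{n-1}\}$ with $v\equiv 1\bmod\pi^{m}$. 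The key external input is the surjectivity of the Galois symbol $\KM_{n}(L)/p\twoheadrightarrow\HO^{n}_{\et}(L,\mu_{p}^{\otimes n})$, which reduces the problem to computing the associated graded of the unit filtration on $\KM_{n}(L)/p$.

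For each $m$ I would construct an explicit map from differential forms on $Y$ into $\operatorname{gr}^{m}\operatorname{M}^{n}_{1}=\operatorname{U}^{m}\operatorname{M}^{n}_{1}/\operatorname{U}^{m+1}\operatorname{M}^{n}_{1}$ and prove it is an isomorphism. The computational engine is the leading-term expansion of the logarithmic derivative of the distinguished entry: for $v=1+\pi^{m}a$ one has $d\log v\equiv \pi^{m-1}\bigl(m\,a\,d\pi+\pi\,da\bigr)$ modulo higher powers of $\pi$. When $0<m<e^{\prime}$ and $p\nmid m$, the term $m\,a\,d\pi$ is non-degenerate, and $\{1+\pi^{m}a,u_{1},\dots,u_{n-1}\}$ depends only on $\bar{a}\,d\log\bar{u}_{1}\wedge\cdots\wedge d\log\bar{u}_{n-1}$; as $\bar{a}$ ranges freely and $d\bar{u}=\bar{u}\,d\log\bar{u}$, this exhausts $\Omega^{n-1}_{Y}$, yielding (ii). When $p\mid m$, the coefficient $m$ dies modulo $p$, the $d\pi$-term drops out, and only the exact part $\pi^{m}\,da$ survives, reducing on $Y$ to $d\bar{a}$; the image is thereby forced into $d\Omega^{\bullet}_{Y}$, and splitting the symbols according to whether $\pi$ occurs among the remaining entries produces the two exact-form summands $\operatorname{B}^{n-1}_{1}\oplus\operatorname{B}^{n-2}_{1}$ of (iii). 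The top quotient (i) is treated identically through the $d\log$ map and the residue map, the summand $\Omega^{n}_{Y,\log}$ arising from symbols in units alone and $\Omega^{n-1}_{Y,\log}$ from symbols containing $\pi$.

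For (iv), the bound $e^{\prime}=ep/(p-1)$ is precisely the threshold controlling $p$-th powers. Since $v(p)=e$, the map $x\mapsto x^{p}$ carries $\operatorname{U}^{m}\mathcal{O}_{X}^{*}$ onto $\operatorname{U}^{m+e}\mathcal{O}_{X}^{*}$ once $m>e/(p-1)$, so every unit in $\operatorname{U}^{e^{\prime}}$ is a $p$-th power modulo a unit of strictly higher level. As $\{w^{p},u_{1},\dots,u_{n-1}\}=p\{w,u_{1},\dots,u_{n-1}\}=0$ with $\mu_{p}^{\otimes n}$-coefficients, this forces $\operatorname{U}^{e^{\prime}}\operatorname{M}^{n}_{1}=0$; in particular the filtration is finite, separated, and exhaustive, so the graded computations above determine $\operatorname{M}^{n}_{1}$ completely.

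The main obstacle is injectivity of the graded maps, i.e.\ showing that the Steinberg relations impose nothing beyond what is already recorded in $\Omega^{\bullet}_{Y}$. Well-definedness is a direct check that $d\log$ annihilates $\{x,1-x\}$ and respects the $\pi$-adic congruences; the reverse direction requires the structure of logarithmic forms over the imperfect residue fields $\kappa(y)$, which I would supply through the Bloch--Kato--Gabber exact sequence $0\to\Omega^{q}_{Y,\log}\to\Omega^{q}_{Y}\xrightarrow{1-C^{-1}}\Omega^{q}_{Y}/d\Omega^{q-1}_{Y}\to 0$ and the identification of $\operatorname{B}^{q}_{1}$ with the image of $d$. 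The imperfection of $\kappa(y)$ is exactly what forces the full differentials $\Omega^{n-1}_{Y}$ --- rather than only logarithmic ones --- into the middle gradeds, so the delicate point is to control the Cartier operator $C^{-1}$ and the behaviour of $d$ under the reduction $A\to A/\pi$, the latter being the role played by Lemma \ref{dRex}.
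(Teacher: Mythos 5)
First, a point of comparison: the paper itself offers no proof of this statement --- it is quoted verbatim from Bloch--Kato \cite[p.112, Corollary (1.4.1)]{B-K} and used as a black box --- so there is no internal argument to measure yours against. On its own terms, your sketch correctly reproduces the overall architecture of the Bloch--Kato computation: the maps from differential forms to the graded pieces $\operatorname{U}^{m}\operatorname{M}^{n}_{1}/\operatorname{U}^{m+1}\operatorname{M}^{n}_{1}$ sending $\{1+\pi^{m}a,u_{1},\dots,u_{n-1}\}$ to $\bar{a}\,d\log\bar{u}_{1}\wedge\cdots\wedge d\log\bar{u}_{n-1}$, the trichotomy $p\nmid m$ versus $p\mid m$ versus $m\geq e^{\prime}$ governed by the expansion of $d\log(1+\pi^{m}a)$ and by $p$-th powers, and part (iv), which as you argue it is essentially complete.

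There are, however, two genuine gaps. First, your ``key external input'' --- surjectivity of the Galois symbol $\KM_{n}(L)/p\to\HO^{n}_{\et}(L,\mu_{p}^{\otimes n})$ for the mixed-characteristic henselian fields involved --- is not available in advance: in Bloch--Kato it is a \emph{consequence} of this structure theorem (their Theorem (5.12) is deduced from (1.4) together with the syntomic and vanishing-cycle computations of their later sections), and assuming it makes statement (i) nearly circular, since $\operatorname{M}^{n}_{1}/\operatorname{U}^{1}\operatorname{M}^{n}_{1}\simeq\Omega^{n}_{Y,\log}\oplus\Omega^{n-1}_{Y,\log}$ already encodes the assertion that symbols generate $\operatorname{M}^{n}_{1}$. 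Second, the step you yourself flag as ``the main obstacle'' --- injectivity of the maps from $\Omega^{n-1}_{Y}$ (resp.\ $\operatorname{B}^{n-1}_{1}\oplus\operatorname{B}^{n-2}_{1}$) into the graded quotients, i.e.\ that the cohomology sheaf is at least as large as the differential forms predict --- is not actually addressed by the tools you list: the Cartier/Bloch--Gabber--Kato sequence controls logarithmic forms on the characteristic-$p$ side but gives no lower bound on $\HO^{n}_{\et}(L,\mu_{p}^{\otimes n})$ itself, and producing that lower bound (a left inverse via residues, or an independent dimension count) is where the entire weight of the Bloch--Kato argument sits; no amount of symbol manipulation modulo Steinberg relations can substitute for it. Two smaller inaccuracies: the stalk of $\operatorname{M}^{n}_{1}$ at a geometric point of $Y$ of codimension greater than one in $\mathfrak{X}$ is the cohomology of $\spec\mathcal{O}^{sh}_{\mathfrak{X},\bar{y}}[1/\pi]$, which is not the spectrum of a field, so the reduction to henselian discrete valuation fields needs justification (Bloch--Kato work sheaf-theoretically on $Y$ throughout); and Lemma \ref{dRex} of this paper concerns reduction modulo a regular element of an $\mathbb{F}_{p}$-algebra, not the mixed-characteristic reduction $A\to A/(\pi)$ your last step would require.
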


\cite[p.548, Corollary 1.7]{H} and 
\cite[pp.184--185, Theorem 3.3]{SaR}
are the improved versions of Theorem \ref{BKmix}. 

As an application, we have the following lemma.
\begin{lem}\upshape\label{Uvan}
Let $R$ be a henselian local ring of a smooth scheme over
a mixed-characteristic discrete valuation ring $A$
and $\pi$ a prime element of $A$. 

Then we have
\begin{equation*}
\HO^{q}_{\et}
\left(
R/(\pi),
\operatorname{U}^{1}
\operatorname{M}_{1}^{n}
\right)
=
0
\end{equation*}
for $q\geq 1$.
\end{lem}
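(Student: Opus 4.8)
The plan is to run a dévissage along the finite Bloch--Kato filtration $\operatorname{U}^{\bullet}\operatorname{M}^{n}_{1}$ of Theorem \ref{BKmix}, reducing the desired vanishing to the \'{e}tale cohomology of the graded pieces, which are in turn controlled by Lemma \ref{homV} and by quasi-coherence. The first point to record is that $Y=\spec(R/(\pi))$ satisfies the hypotheses of Lemma \ref{homV}: since $R$ is a henselian local ring of a smooth algebra over $A$ and $\pi$ is a uniformizer of $A$, the quotient $R/(\pi)$ is a henselian local ring of a smooth algebra over the residue field $A/(\pi)$, hence a henselian regular local ring with $\operatorname{char}(R/(\pi))=p>0$. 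Thus I may apply Lemma \ref{homV} with $X=Y$.

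Next I would use Theorem \ref{BKmix}(iv): the filtration terminates, $\operatorname{U}^{m}\operatorname{M}^{n}_{1}=0$ for $m\geq e^{\prime}$, so $\operatorname{U}^{1}\operatorname{M}^{n}_{1}$ is equipped with a finite decreasing filtration whose successive quotients $\operatorname{gr}^{m}$ are, by Theorem \ref{BKmix}(ii),(iii), equal to $\Omega^{n-1}_{Y}$ when $p\nmid m$ and to $\operatorname{B}^{n-1}_{1}\oplus\operatorname{B}^{n-2}_{1}$ when $p\mid m$. Feeding the short exact sequences $0\to\operatorname{U}^{m+1}\operatorname{M}^{n}_{1}\to\operatorname{U}^{m}\operatorname{M}^{n}_{1}\to\operatorname{gr}^{m}\to 0$ into the long exact \'{e}tale cohomology sequence and inducting downward from $\operatorname{U}^{e^{\prime}}\operatorname{M}^{n}_{1}=0$, it suffices to prove $\HO^{q}_{\et}(Y,G)=0$ for $q\geq 1$ for each graded piece $G$. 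For $G=\Omega^{n-1}_{Y}$ this is the quasi-coherence argument already used in Lemma \ref{homV} (higher Zariski cohomology of a quasi-coherent sheaf on the spectrum of a local ring vanishes, and \'{e}tale cohomology agrees with Zariski cohomology here). For $G=\operatorname{B}^{q}_{1}=d\Omega^{q-1}_{Y}$ with $q-1\geq 1$ the vanishing is precisely Lemma \ref{homV}, while for $q\leq 0$ the sheaf $\operatorname{B}^{q}_{1}$ is zero.

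The only graded piece left over, and the step I expect to be the main obstacle, is $\operatorname{B}^{1}_{1}=d\mathcal{O}_{Y}=d\Omega^{0}_{Y}$ (which occurs for $n=2$ and $n=3$), since Lemma \ref{homV} is stated only for $i>0$ and so does not apply at the bottom of the de Rham complex. I would dispose of it directly: the Cartier sequence $0\to d\Omega^{-1}_{Y}\to Z\Omega^{0}_{Y}\to\Omega^{0}_{Y}\to 0$ of \cite[p.574, Proposition 2.5]{Sh}, with $d\Omega^{-1}_{Y}=0$, identifies $Z\Omega^{0}_{Y}$ with $\mathcal{O}_{Y}$ as abelian sheaves, so $\HO^{q}_{\et}(Y,Z\Omega^{0}_{Y})=0$ for $q\geq 1$ by quasi-coherence; then the exact sequence $0\to Z\Omega^{0}_{Y}\to\mathcal{O}_{Y}\xrightarrow{d}d\mathcal{O}_{Y}\to 0$ yields $\HO^{q}_{\et}(Y,d\mathcal{O}_{Y})=0$ for $q\geq 1$. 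With every graded piece handled, the dévissage of the first two paragraphs completes the argument.
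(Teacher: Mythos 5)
Your proof is correct and follows the same route as the paper: a d\'{e}vissage of $\operatorname{U}^{1}\operatorname{M}^{n}_{1}$ along the finite Bloch--Kato filtration of Theorem \ref{BKmix}, with the graded pieces killed by Lemma \ref{homV} and by quasi-coherence of $\Omega^{n-1}_{Y}$. The one place you go beyond the paper is worthwhile: the paper simply cites Lemma \ref{homV} for $\operatorname{B}^{n-1}_{1}$ and $\operatorname{B}^{n-2}_{1}$, which literally covers only $d\Omega^{i}_{Y}$ with $i>0$ and so leaves the piece $\operatorname{B}^{1}_{1}=d\mathcal{O}_{Y}$ (arising when $n=2$ or $n=3$) unaddressed, and your direct treatment of it via the Cartier identification $Z\Omega^{0}_{Y}\simeq\mathcal{O}_{Y}$ closes that small gap cleanly.
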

\begin{proof}\upshape
We may assume that $A$ is a henselian discrete valuation ring.
Let $q\geq 1$.
Since $R/(\pi)$ is a henselian local ring and 
$\operatorname{char}(R/(\pi))>0$, we have
\begin{equation*}
\HO^{q}_{\et}
\left(
R/(\pi),
\operatorname{B}_{1}^{n-1}
\right)
=
\HO^{q}_{\et}
\left(
R/(\pi),
\operatorname{B}_{1}^{n-2}
\right)
=
0
\end{equation*}
by Lemma \ref{homV}. Moreover
\begin{equation*}
\HO^{q}_{\et}
\left(
R/(\pi),
\Omega^{n-1}_{R/(\pi)}
\right)
=
0
\end{equation*}
by \cite[p.103, III, Lemma 2.15]{M} and 
\cite[p.114, III, Remark 3.8]{M}.
Therefore the statement follows from Theorem \ref{BKmix}.
\end{proof}
We prove Theorem \ref{Ihenmix} 
by computing the cohomology group
\begin{equation*}
\HO^{n+1}_{\et}
\left(
R/(\pi),
i_{*}
\tau_{\leq n}
\mathbf{R}j_{*}\mu_{p}^{\otimes n}
\right)
\end{equation*}
as follows.
\begin{thm}\upshape\label{henmix}
Let $R$ be a henselian local ring of a smooth scheme over
a mixed-characteristic discrete valuation ring $A$ and
$k$ the residue field of $R$. 

Then we have an isomorphism
\begin{equation*}
\HO^{n+1}_{\et}
\left(
R,
\mathbb{Z}/m(n)
\right)
\xrightarrow{\sim}
\HO^{n+1}_{\et}
\left(
k,
\mathbb{Z}/m(n)
\right)
\end{equation*}
for any positive integer $m$.
\end{thm}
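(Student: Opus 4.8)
The plan is to reduce to $m=p^{r}$ and then to push everything onto the special fiber. Writing $m=p^{r}m'$ with $m'$ prime to $p$, the prime-to-$p$ part is already an isomorphism by Geisser--Levine and Voevodsky (as recorded in the remark following the statement), so I may assume $m=p^{r}$. I would first settle the basic case $r=1$ and then pass to general $r$ by d\'{e}vissage: the coefficient triangle relating $\mathbb{Z}/p(n)$, $\mathbb{Z}/p^{r}(n)$ and $\mathbb{Z}/p^{r-1}(n)$ produces, for both $R$ and $k$, compatible long exact sequences, and the five lemma propagates the comparison from $p$ and $p^{r-1}$ to $p^{r}$ (within the range of degrees that the computation below actually controls).

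For $m=p$, set $X=\spec R$, let $\pi$ be a prime of the base $A$, let $i\colon Y=\spec R/(\pi)\to X$ be the special fiber and $j\colon U\to X$ the generic fiber; since $p$ is invertible on $U$ we have $j^{*}\mathbb{Z}/p(n)_{\et}\simeq\mu_{p}^{\otimes n}$, and $Y$ is a henselian local ring of a smooth algebra over the residue field of $A$, with residue field $k$ of characteristic $p$. The key step is to identify $\HO^{n+1}_{\et}(R,\mathbb{Z}/p(n))$ with the special-fiber group $\HO^{n+1}_{\et}(Y,\mathbb{Z}/p(n))$. To this end I would imitate Proposition \ref{supp}: feeding $j^{*}\mathbb{Z}/p(n)_{\et}\simeq\mu_{p}^{\otimes n}$ and purity (Proposition \ref{tru2}, with $c=1$) into the localization triangle on $X$ yields a long exact sequence whose support term is $\HO^{q-n-1}_{\et}(Y,\Omega^{n-1}_{Y,\log})$ and whose generic-fiber term is governed by the nearby-cycle complex $i^{*}\tau_{\leq n}\mathbf{R}j_{*}\mu_{p}^{\otimes n}$ on $Y$; the outcome should exhibit $\HO^{n+1}_{\et}(R,\mathbb{Z}/p(n))$ as the kernel of an associated residue map out of $\HO^{n+1}_{\et}(Y,i^{*}\tau_{\leq n}\mathbf{R}j_{*}\mu_{p}^{\otimes n})$, exactly as in the discrete valuation ring case.

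That nearby-cycle group is computed by the Bloch--Kato filtration. Since $k$ has $p$-cohomological dimension at most $1$, the hypercohomology spectral sequence $\HO^{s}_{\et}(Y,i^{*}\mathbf{R}^{t}j_{*}\mu_{p}^{\otimes n})\Rightarrow\HO^{s+t}_{\et}(Y,i^{*}\tau_{\leq n}\mathbf{R}j_{*}\mu_{p}^{\otimes n})$ is supported in $s\in\{0,1\}$ and degenerates, so in total degree $n+1$ only $\HO^{1}_{\et}(Y,\operatorname{M}^{n}_{1})$ survives. By Theorem \ref{BKmix}(i) one has $\operatorname{M}^{n}_{1}/\operatorname{U}^{1}\operatorname{M}^{n}_{1}\simeq\Omega^{n}_{Y,\log}\oplus\Omega^{n-1}_{Y,\log}$, while Lemma \ref{Uvan} gives $\HO^{q}_{\et}(Y,\operatorname{U}^{1}\operatorname{M}^{n}_{1})=0$ for $q\geq 1$; hence $\HO^{1}_{\et}(Y,\operatorname{M}^{n}_{1})\simeq\HO^{1}_{\et}(Y,\Omega^{n}_{Y,\log})\oplus\HO^{1}_{\et}(Y,\Omega^{n-1}_{Y,\log})$. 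I would then match the residue map of the previous paragraph with the projection onto the second summand, which lands in a group of the shape $\HO^{n}_{\et}(\,\cdot\,,\mathbb{Z}/p(n-1))$ compatible with the Gersten sequence of Corollary \ref{HenGer}, leaving $\HO^{n+1}_{\et}(R,\mathbb{Z}/p(n))\simeq\HO^{1}_{\et}(Y,\Omega^{n}_{Y,\log})=\HO^{n+1}_{\et}(Y,\mathbb{Z}/p(n))$.

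It then remains to pass from $Y$ to its closed point: as $Y$ is a henselian local ring of a smooth algebra over a field of characteristic $p$, the equi-characteristic rigidity already proved in Theorem \ref{hendR} gives $\HO^{1}_{\et}(Y,\Omega^{n}_{Y,\log})\xrightarrow{\sim}\HO^{1}_{\et}(k,\Omega^{n}_{k,\log})$, i.e.\ $\HO^{n+1}_{\et}(Y,\mathbb{Z}/p(n))\xrightarrow{\sim}\HO^{n+1}_{\et}(k,\mathbb{Z}/p(n))$, and a naturality check identifies the composite with the restriction map of the theorem. The main obstacle is the first identification. In mixed characteristic with $p$-torsion coefficients one cannot invoke proper base change or henselian invariance along $Y$ (higher \'{e}tale cohomology of strictly henselian local rings does not vanish for $p$-torsion in characteristic $p$), so the reduction of $\HO^{n+1}_{\et}(R,\mathbb{Z}/p(n))$ to the special fiber must be routed entirely through the purity triangle and the fine structure of the nearby cycles, and one must verify that the residue occurring there is precisely the residue of the Gersten sequence, so that exactly the $\Omega^{n-1}_{Y,\log}$-summand is cut off.
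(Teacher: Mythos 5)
Your proposal reproduces the paper's argument in all essentials: the reduction to $m=p$, the computation of the degree-$(n+1)$ cohomology of the nearby-cycle complex $i^{*}\tau_{\leq n}\mathbf{R}j_{*}\mu_{p}^{\otimes n}$ as $\HO^{1}_{\et}\left(Y,\operatorname{M}^{n}_{1}/\operatorname{U}^{1}\operatorname{M}^{n}_{1}\right)$ via $\operatorname{cd}_{p}(k)\leq 1$ and Lemma \ref{Uvan}, the splitting from Theorem \ref{BKmix}(i), the short exact sequence that cuts off the $\Omega^{n-1}_{Y,\log}$-summand, and the final appeal to Theorem \ref{hendR}. The one substantive divergence is your closing claim that ``one cannot invoke \ldots henselian invariance along $Y$'' for $p$-torsion coefficients; this is incorrect, and it is contrary to what the paper actually does, since its very first step is the isomorphism $\HO^{n+1}_{\et}\left(R,\mathbb{Z}/m(n)\right)\simeq\HO^{n+1}_{\et}\left(R/(\pi),i^{*}\mathbb{Z}/m(n)\right)$. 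That invariance holds for coefficients in \emph{any} complex of abelian sheaves, because the \'{e}tale cohomology of a henselian local ring is the Galois cohomology of its residue field with values in the stalk at the closed point (this is what the citation of the proof of \cite[Proposition 2.2 b)]{Ge} provides); no smooth or proper base change theorem enters, and the higher \'{e}tale cohomology of a strictly henselian local ring vanishes for every abelian sheaf, $p$-torsion included. What genuinely fails in mixed characteristic, and what the nearby-cycle computation exists to repair, is only the identification of $i^{*}\mathbb{Z}/p(n)$ with $\mathbb{Z}/p(n)_{Y}$ on the special fiber; your argument addresses exactly that point, and your detour through the localization triangle on $X$ still needs henselian invariance to transport the computation from $X$ to $Y$, so nothing is gained by trying to avoid it. You are right, and somewhat more explicit than the paper, that the boundary map of the purity sequence must be matched with the projection onto the $\Omega^{n-1}_{Y,\log}$-summand of $\operatorname{M}^{n}_{1}/\operatorname{U}^{1}\operatorname{M}^{n}_{1}$ before the direct-sum decomposition can be played off against the short exact sequence.
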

\begin{proof}\upshape

Let 
$\pi$ be a prime element of $A$ 
and
$i: \spec R/(\pi)\to \spec R$ 
the closed subscheme with open
complement
$j: \spec R[\pi^{-1}]\to \spec R$.

Then the restriction map
\begin{equation*}
\HO^{n+1}_{\et}
\left(
R,
\mathbb{Z}/m(n)
\right)
\xrightarrow{\sim}
\HO^{n+1}_{\et}
\left(
R/(\pi),
i^{*}\mathbb{Z}/m(n)
\right)
\end{equation*}
is an isomorphism
by \cite[p.777, The proof of Proposition 2.2.b)]{Ge}.
Therefore it suffices to show that the homomorphism
\begin{equation}\label{mixizz}
\HO^{n+1}_{\et}
\left(
R/(\pi),
i^{*}\mathbb{Z}/m(n)
\right)
\to
\HO^{n+1}_{\et}
\left(
R/(\pi),
\mathbb{Z}/m(n)
\right)
\end{equation}
is an isomorphism in the case where 
$m=\operatorname{char}(k)=p>0$
by Theorem \ref{hendR}.

\vspace{2.0mm}

We consider the cohomology group
\begin{math}
\HO^{n+1}_{\et}
\left(
R/(\pi),
i_{*}
\tau_{\leq n}
\mathbf{R}j_{*}\mu_{p}^{\otimes n}
\right).
\end{math}

\vspace{2.0mm}

We have the spectral sequence
\begin{equation}\label{spemix}
\HO^{s}_{\et}
\left(
R/(\pi),
\mathcal{H}^{t}
\left(
i^{*}
\tau_{\leq n}
\mathbf{R}j_{*}\mu^{\otimes n}_{p}
\right)
\right)
\Rightarrow
\mathbb{R}^{s+t}
\Gamma_{\et}
\left(
i^{*}
\tau_{\leq n}
\mathbf{R}j_{*}\mu^{\otimes n}_{p}
\right)
\end{equation}
and
\begin{equation*}
\mathbb{R}^{n+1}
\Gamma_{\et}
\left(
i^{*}
\tau_{\leq n}
\mathbf{R}j_{*}\mu^{\otimes n}_{p}
\right)
=
\HO^{n+1}_{\et}
\left(
R/(\pi),
i^{*}
\tau_{\leq n}
\mathbf{R}j_{*}\mu^{\otimes n}_{p}
\right)
\end{equation*}
where
\begin{math}
\mathbb{R}^{*}
\Gamma_{\et}
\end{math}
is the right hyper-derived functor of the global sections functor
$\Gamma_{\et}$ from 
$\mathbb{S}_{\left(\spec R/(\pi)\right)_{\et}}$. 

Since
$p$-cohomological dimension of $k$
is at most $1$,
\begin{equation}\label{mixvan}
\HO^{s}_{\et}
\left(
R/(\pi),
i^{*}\mathbf{R}^{t}j_{*}\mu_{p}^{\otimes n}
\right)
=0
\end{equation}
for $s\geq 2$ by 
\cite[p.777, The proof of Proposition 2.2.b)]{Ge}. Hence we have
\begin{align*}
\HO^{n+1}_{\et}
\left(
R/(\pi),
\tau_{\leq n}
\left(
i^{*}\mathbf{R}j_{*}\mu_{p}^{\otimes n}
\right)
\right)
=
\HO^{1}_{\et}
\left(
R/(\pi),
i^{*}\mathbf{R}^{n}j_{*}\mu_{p}^{\otimes n}
\right)
\end{align*}
by the spectral sequence (\ref{spemix}) and
\begin{equation*}
\HO^{1}_{\et}
\left(
R/(\pi),
i^{*}\mathbf{R}^{n}j_{*}\mu_{p}^{\otimes n}
\right)
=
\HO^{1}_{\et}
\left(
R/(\pi),
\operatorname{M}^{n}_{1}/
\operatorname{U}^{1}\operatorname{M}^{n}_{1}
\right)
\end{equation*}
by Lemma \ref{Uvan}. Therefore we have
\begin{align}
&\HO^{n+1}_{\et}
\left(
R/(\pi),
\tau_{\leq n}
\left(
i^{*}\mathbf{R}j_{*}\mu_{p}^{\otimes n}
\right)
\right)    \nonumber  \\
\label{mixdec}
=
&\HO^{n+1}_{\et}
\left(
R/(\pi),
\mathbb{Z}/p(n)
\right)
\oplus
\HO^{n}_{\et}
\left(
R/(\pi),
\mathbb{Z}/p(n-1)
\right)
\end{align}
by Theorem \ref{BKmix} (i). On the other hand, the homomorphism
\begin{equation*}
\HO^{n}_{\et}
\left(
R/(\pi),
i^{*}\tau_{\leq n}
\mathbf{R}j_{*}\mu^{\otimes n}_{p}
\right)
\to
\HO^{0}_{\et}
\left(
R/(\pi),
i^{*}\mathbf{R}^{n}j_{*}\mu^{\otimes n}_{p}
\right)
\end{equation*}
is surjective by 
the spectral sequence (\ref{spemix}) and 
the equation (\ref{mixvan}).
Moreover the homomorphism
\begin{equation*}
\HO^{0}_{\et}
\left(
R/(\pi),
i^{*}\mathbf{R}^{n}j_{*}\mu^{\otimes n}_{p}
\right)
\to
\HO^{0}_{\et}
\left(
R/(\pi),
\operatorname{M}^{n}_{1}/
\operatorname{U}^{1}
\operatorname{M}^{n}_{1}
\right)
\end{equation*}
is surjective by Lemma \ref{Uvan}. 
Therefore the homomorphism
\begin{equation*}
\HO^{n}_{\et}
\left(
R/(\pi),
i^{*}\tau_{\leq n}
\mathbf{R}j_{*}\mu^{\otimes n}_{p}
\right)
\to
\HO^{n-1}_{\et}
\left(
R/(\pi),
\mathbb{Z}/p(n-1)
\right)
\end{equation*}
is surjective and the sequence
\begin{align}
0
\to
\HO^{n+1}_{\et}
\left(
R/(\pi),
i^{*}\mathbb{Z}/p(n)
\right)
\to
\HO^{n+1}_{\et}
\left(
R/(\pi),
i^{*}\tau_{\leq n}
\mathbf{R}j_{*}\mu^{\otimes n}_{p}
\right)   \nonumber \\
\label{seqiz}
\to
\HO^{n}_{\et}
\left(
R/(\pi),
\mathbb{Z}/p(n-1)
\right)
\to
0
\end{align}
is exact by the distinguished triangle
\begin{align*}
\cdots
\to
i^{*}\mathbb{Z}/p(n)_{\et}
\to
i^{*}\tau_{\leq n}\mathbf{R}j_{*}\mathbb{Z}/p(n)_{\et}
\to
\mathbb{Z}/p(n)_{\et}
\to
\cdots.
\end{align*}
Therefore the homomorphism (\ref{mixizz}) is an isomorphism 
by (\ref{mixdec}) and (\ref{seqiz}). This completes the proof.
\end{proof}
\section{Local-global principle}\label{App}
%
\begin{prop}\upshape\label{RHGL}
Let 
$A$ be a discrete valuation ring of mixed characteristic
$(0, p)$ and 
$\pi$ a prime element of $A$.
Let $R$ be a henselian local ring of a smooth algebra over
$A$.  Then the map
\footnotesize
\begin{equation}\label{LGL}
\HO^{n+1}_{\et}
\left(
k(R),
\mu^{\otimes n}_{p^{r}}
\right)
\to
\HO^{n+1}_{\et}
\left(
k(
\tilde{
R_{(\pi)}
}
),
\mu^{\otimes n}_{p^{r}}
\right)
\oplus
\displaystyle
\bigoplus_{
\substack{
\mathfrak{p}\in \left(\spec R\right)^{(1)} 
\backslash (\pi)
}
}
\HO^{n+2}_{\mathfrak{p}}
\left(
(
\tilde{
R_{\mathfrak{p}}
}
)_{\et},
\mu^{\otimes n}_{p^{r}}
\right)
\end{equation}
\normalsize
is injective
where $\tilde{R_{\mathfrak{p}}}$ is the henselization of $R$ at 
$\mathfrak{p}$.
\end{prop}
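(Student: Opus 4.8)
The plan is to deduce the injectivity of (\ref{LGL}) from the mixed-characteristic Gersten sequence of Corollary \ref{HenGer}, the rigidity theorem (Theorem \ref{henmix}), and the Gersten injectivity on the characteristic-$p$ special fibre. Since $k(R)$ has characteristic $0$, over $k(R)$ there is a quasi-isomorphism $\mathbb{Z}/p^{r}(n)_{\et}\simeq\mu^{\otimes n}_{p^{r}}[0]$, so that $\HO^{n+1}_{\et}(k(R),\mu^{\otimes n}_{p^{r}})=\HO^{n+1}_{\et}(k(R),\mathbb{Z}/p^{r}(n))$. For a height-$1$ prime $\mathfrak{p}\neq(\pi)$ one has $\pi\notin\mathfrak{p}$, hence $\kappa(\mathfrak{p})$ has characteristic $0$ and $\tilde{R_{\mathfrak{p}}}$ is a henselian discrete valuation ring with characteristic-$0$ residue field; applying the purity isomorphism (Proposition \ref{tru2}) to this codimension-$1$ immersion I would identify
\begin{equation*}
\HO^{n+2}_{\mathfrak{p}}\left((\tilde{R_{\mathfrak{p}}})_{\et},\mu^{\otimes n}_{p^{r}}\right)\simeq\HO^{n}_{\et}\left(\kappa(\mathfrak{p}),\mathbb{Z}/p^{r}(n-1)\right)
\end{equation*}
and check that the corresponding component of (\ref{LGL}) is the boundary of the localization sequence of $\tilde{R_{\mathfrak{p}}}$, that is, the residue map appearing in Corollary \ref{HenGer}.

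Let $\alpha$ lie in the kernel of (\ref{LGL}). First I would show $\alpha\in\HO^{n+1}_{\et}(k(R),\mathbb{Z}/p^{r}(n))^{\prime}$. For $\mathfrak{p}\neq(\pi)$ the vanishing of the $\mathfrak{p}$-component says that $\alpha|_{k(\tilde{R_{\mathfrak{p}}})}$ lifts to $\HO^{n+1}_{\et}(\tilde{R_{\mathfrak{p}}},\mu^{\otimes n}_{p^{r}})$; restricting further to the strict henselization $R_{\bar{\mathfrak{p}}}$ and invoking the classical rigidity theorem for $\mu^{\otimes n}_{p^{r}}$ (invertible on $R_{\bar{\mathfrak{p}}}$) (\cite[p.774, Theorem 1.2.3]{Ge}, \cite{V2}) identifies this lift with a class in $\HO^{n+1}_{\et}(\overline{\kappa(\mathfrak{p})},\mu^{\otimes n}_{p^{r}})$, which vanishes because $\overline{\kappa(\mathfrak{p})}$ is separably, hence algebraically, closed and $n+1\geq1$. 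Thus $\alpha|_{k(R_{\bar{\mathfrak{p}}})}=0$, while at $(\pi)$ this is immediate from $\alpha|_{k(\tilde{R_{(\pi)}})}=0$. Therefore $\alpha\in\HO^{n+1}_{\et}(k(R),\mathbb{Z}/p^{r}(n))^{\prime}$, and all its residues vanish (at $\mathfrak{p}\neq(\pi)$ by the identification above, at $(\pi)$ because $\alpha$ already dies over $\tilde{R_{(\pi)}}$), so by the exactness in Corollary \ref{HenGer} there is a unique $\beta\in\HO^{n+1}_{\et}(R,\mathbb{Z}/p^{r}(n))$ with image $\alpha$.

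It remains to prove $\beta=0$. The restriction $\HO^{n+1}_{\et}(\tilde{R_{(\pi)}},\mathbb{Z}/p^{r}(n))\to\HO^{n+1}_{\et}(k(\tilde{R_{(\pi)}}),\mathbb{Z}/p^{r}(n))$ is injective by Proposition \ref{supp}, and the image of $\beta|_{\tilde{R_{(\pi)}}}$ in the generic fibre equals $\alpha|_{k(\tilde{R_{(\pi)}})}=0$, whence $\beta|_{\tilde{R_{(\pi)}}}=0$. Writing $S=R/(\pi)$, the rigidity isomorphisms $\HO^{n+1}_{\et}(R,\mathbb{Z}/p^{r}(n))\simeq\HO^{n+1}_{\et}(S,\mathbb{Z}/p^{r}(n))$ and $\HO^{n+1}_{\et}(\tilde{R_{(\pi)}},\mathbb{Z}/p^{r}(n))\simeq\HO^{n+1}_{\et}(k(S),\mathbb{Z}/p^{r}(n))$ of Theorem \ref{henmix} (given by restriction to the special fibres) are compatible with restriction to the generic point of $S$. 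Since $S$ is a henselian local ring of a smooth algebra over the characteristic-$p$ residue field of $A$, the restriction $\HO^{n+1}_{\et}(S,\mathbb{Z}/p^{r}(n))\to\HO^{n+1}_{\et}(k(S),\mathbb{Z}/p^{r}(n))$ is injective by the characteristic-$p$ Gersten injectivity (\cite[p.774, Theorem 1.2]{Ge}, \cite[p.608, Theorem 5.2]{Sh}). Hence $\beta=0$, and therefore $\alpha=0$.

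The main obstacle I anticipate is not any single vanishing but the bookkeeping that glues the two geometrically different local conditions---the restriction along the vertical henselian discrete valuation ring $\tilde{R_{(\pi)}}$ of the special fibre and the residues along the horizontal primes $\mathfrak{p}\neq(\pi)$---into one application of the mixed-characteristic Gersten sequence. The two delicate compatibilities to verify carefully are that the localization boundary for $\tilde{R_{\mathfrak{p}}}$ matches the residue of Corollary \ref{HenGer} under the purity identification, and that the two rigidity isomorphisms of Theorem \ref{henmix} are compatible with restriction to the generic point of $S=R/(\pi)$; once these are in hand, the result follows by combining the mixed-characteristic Gersten sequence with the characteristic-$p$ Gersten injectivity on $S$.
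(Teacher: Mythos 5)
Your argument is correct and follows essentially the same route as the paper: reduce via the Gersten sequence (Theorem \ref{GBmixex}/Corollary \ref{HenGer}) to the injectivity of $\HO^{n+1}_{\et}(R,\mathbb{Z}/p^{r}(n))\to\HO^{n+1}_{\et}(\tilde{R_{(\pi)}},\mathbb{Z}/p^{r}(n))$, then conclude by the rigidity isomorphisms onto the special fibre $R/(\pi)$ and the equicharacteristic-$p$ injectivity into $k(R/(\pi))$. You merely unpack in more detail the step the paper compresses into ``by Theorem \ref{GBmixex} it suffices to show (\ref{LArt}) is injective,'' namely that a class in the kernel of (\ref{LGL}) lies in the primed group with vanishing residues and hence lifts to $R$.
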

\begin{proof}\upshape
By Theorem \ref{GBmixex},
it suffices to show that the homomorphism
\begin{equation}\label{LArt}
\HO^{n+1}_{\et}\left(
R,
\mathbb{Z}/p^{r}(n)
\right)
\to
\HO^{n+1}_{\et}\left(
\tilde{R_{(\pi)}},
\mathbb{Z}/p^{r}(n)
\right)
\end{equation}
is injective.

We consider the commutative diagram
\begin{equation}\label{comGL}
\begin{CD}
\HO^{n+1}_{\et}\left(
R,
\mathbb{Z}/p^{r}(n)
\right)
@>{(\ref{LArt})}>>
\HO^{n+1}_{\et}\left(
\tilde{R_{(\pi)}},
\mathbb{Z}/p^{r}(n)
\right)\\
@VVV @VVV  \\
\HO^{n+1}_{\et}\left(
R/(\pi),
\mathbb{Z}/p^{r}(n)
\right)
@>>>
\HO^{n+1}_{\et}\left(
\tilde{R_{(\pi)}}/(\pi),
\mathbb{Z}/p^{r}(n)
\right).
\end{CD}
\end{equation}
Then the left map in the diagram (\ref{comGL})
is an isomorphism by Theorem \ref{henmix}. 
Moreover
the lower map in the diagram (\ref{comGL}) is injective
by Theorem \ref{mixinj} and
$R_{(\pi)}/(\pi)
=
\tilde{R_{(\pi)}}/(\pi)$. Therefore the homomorphism
(\ref{LArt})
is injective. This completes the proof.
\end{proof}
We review the $K$-theoretic fact
before we prove the main result of this section (Theorem \ref{LGG}).

For a field $F$ and an integer $n\geq 1$,
$\operatorname{K}^{M}_{n}(F)$ denotes 
the $n$-th Milnor $K$-group
of $F$. 
Then we have the following fact:
\begin{lem}\upshape(cf.\cite[Lemma 2.1]{Hu})
\label{WA}
Let $v_{1}, \cdots, v_{s}$ be a finite collection 
of independent discrete valuations on a field $F$ of
characteristic $0$. Denote by $F_{i}$
the henselization of $F$ at $v_{i}$ for each $i$.
Let $r\geq 1$ be an integer. Then for every $n\geq 1$,
the natural map
\begin{equation*}
\operatorname{K}^{M}_{n}(F)/r
\to
\bigoplus_{i}
\operatorname{K}^{M}_{n}(F_{i})/r
\end{equation*}
is surjective.
\end{lem}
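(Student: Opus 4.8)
The plan is to combine the weak approximation theorem for independent valuations with the fact that sufficiently high principal units in a henselian field are $r$-th powers. First I would reduce the surjectivity to hitting, for each fixed index $i$, the elements supported in the single summand $\operatorname{K}^{M}_{n}(F_{i})/r$; since the image of a homomorphism is a subgroup, it suffices to realize each tuple $(0,\dots,0,\beta_{i},0,\dots,0)$. As Milnor $K$-theory is generated by symbols, it then suffices to realize an arbitrary symbol $\{b_{1},\dots,b_{n}\}$ with $b_{l}\in F_{i}^{*}$ as the image of a global class that restricts trivially to every $F_{i'}$ with $i'\neq i$.

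The key local input I would establish is the following: for a henselian discrete valuation field $L$ whose fraction field has characteristic $0$, there is an integer $N\geq 1$ (depending on $L$ and $r$) with $1+\mathfrak{m}_{L}^{N}\subseteq (L^{*})^{r}$. This follows from the strong form of Hensel's lemma applied to $X^{r}-(1+a)$ for $a\in\mathfrak{m}_{L}^{N}$: writing $e:=v_{L}(r)$, which is finite precisely because $\operatorname{char}(F)=0$, the condition $N>2e$ forces a root congruent to $1$. This single statement yields both that a symbol is unchanged modulo $r$ under $v_{i}$-adically small perturbations of its entries (since $a_{l}/b_{l}\in(F_{i}^{*})^{r}$ makes $\{a_{1},\dots,a_{n}\}\equiv\{b_{1},\dots,b_{n}\}$ by multilinearity), and that a symbol all of whose entries lie in $(F_{i'}^{*})^{r}$ vanishes in $\operatorname{K}^{M}_{n}(F_{i'})/r$.

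With this in hand I would invoke weak approximation in its simultaneous form: since $v_{1},\dots,v_{s}$ are independent, the diagonal image of $F$ in $\prod_{i}F_{i}$ is dense, and $F$ is moreover dense in each henselization $F_{i}$ (as $F_{i}$ sits inside the completion, in which $F$ is already dense). Choosing integers $N_{i'}$ as above for every valuation, for each $l$ I would pick $a_{l}\in F^{*}$ that is $v_{i}$-adically close to $b_{l}$, so that $a_{l}/b_{l}\in 1+\mathfrak{m}_{i}^{N_{i}}\subseteq(F_{i}^{*})^{r}$, and simultaneously $v_{i'}$-adically close to $1$ for all $i'\neq i$, so that $a_{l}\in 1+\mathfrak{m}_{i'}^{N_{i'}}\subseteq(F_{i'}^{*})^{r}$. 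Then $\{a_{1},\dots,a_{n}\}\in\operatorname{K}^{M}_{n}(F)/r$ restricts to $\{b_{1},\dots,b_{n}\}$ in $\operatorname{K}^{M}_{n}(F_{i})/r$ and to $0$ in every other component, which is exactly the element we set out to hit.

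I expect the main obstacle to be the wildly ramified case, where the residue characteristic divides $r$: then $1+\mathfrak{m}_{L}$ is not $r$-divisible and the naive application of Hensel's lemma fails. The remedy is exactly the quantitative bound $N>2\,v_{L}(r)$ in the strong Hensel's lemma, which controls how far into the unit filtration one must descend; confirming that $v_{L}(r)$ is finite is where the characteristic-$0$ hypothesis on $F$ is genuinely used. A secondary technical point is that the approximation must be made simultaneous across all $s$ valuations and all $n$ entries at once, which is guaranteed by the independence of the $v_{i}$.
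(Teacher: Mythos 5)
Your proof is correct and follows essentially the same route as the paper: weak approximation for the independent valuations combined with the fact that sufficiently high principal units in each henselization are $r$-th powers, so that a symbol can be approximated by a global one restricting trivially at the other places. The only cosmetic differences are that the paper first reduces to the case where $r$ is prime and cites Bloch--Kato for the sharper bound $m>\frac{v_{i}(p)\cdot p}{p-1}$ on the unit filtration, whereas you treat general $r$ directly via the strong Hensel estimate $N>2\,v_{L}(r)$; both suffice.
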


\begin{proof}\upshape
Since the sequence
\begin{equation*}
\operatorname{K}^{M}_{n}(F)/r
\xrightarrow{\times r^{\prime}}
\operatorname{K}^{M}_{n}(F)/r r^{\prime}
\to
\operatorname{K}^{M}_{n}(F)/r^{\prime}
\to 
0
\end{equation*}
is exact, it suffices to show the statement 
in the case where $r$ is a prime number $p$.

Let
\begin{equation*}
\operatorname{U}_{F_{i}}^{m}
=
\{
x\in F_{i}, v_{i}(1-x)
\geq
m
\}
\end{equation*}
for $m\geq 1$.
If
\begin{equation*}
\operatorname{U}_{F_{i}}^{N}
\subset
(F_{i}^{*})^{p}
\end{equation*}
for any $i$ and sufficient large $N$,
we can show the statement
by the weak approximation property. 

Let $\kappa(v_{i})$ be the residue field of $v_{i}$. 
In the case where
$(\operatorname{char}(\kappa(v_{i})), p)=1$,
we have
\begin{equation*}
\operatorname{U}_{F_{i}}^{1}
\subset
(F_{i}^{*})^{p}.
\end{equation*}
In the case where
$\operatorname{char}(\kappa(v_{i}))=p$,
we have
\begin{equation*}
\operatorname{U}_{F_{i}}^{m}
\subset
(F_{i}^{*})^{p}
\end{equation*}
for $m>\frac{v_{i}(p)\cdot p}{p-1}$
(cf. \cite[p.124, Lemma (5.1)]{B-K}). This completes the proof.

\end{proof}
\begin{thm}\upshape\label{LGG}
Let $A$ be an excellent henselian discrete valuation ring
of mixed characteristic
$(0, p)$ and 
$\pi$ a prime element of $A$.
Let $\mathfrak{X}$ be 
a connected proper smooth curve over $\spec A$,
$K$ the fraction field of $\mathfrak{X}$
and
$K_{(\eta)}$ the fraction field of
the henselization of
$\mathcal{O}_{\mathfrak{X}, \eta}$.

Then
\footnotesize
\begin{equation*}
\operatorname{H}_{\et}^{n}
(
K, 
\mu_{p^{r}}^{\otimes(n-1)}
)
\to
\bigoplus_{\eta\in Y^{(0)}}
\operatorname{H}_{\et}^{n}
(
K_{(\eta)}, 
\mu_{p^{r}}^{\otimes(n-1)}
)
\oplus
\bigoplus_{x\in \mathfrak{X}^{(1)}\backslash Y^{(0)}}
\operatorname{H}_{\et}^{n-1}
(
\kappa(x), 
\mu_{p^{r}}^{\otimes(n-2)}
)
\end{equation*}
\normalsize
is injective for $n\geq 2$.
\end{thm}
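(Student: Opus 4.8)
The plan is to translate the assertion into a statement about the $p^{r}$-torsion of the generalized Brauer group and then to detect a putative nonzero class on the henselizations at the closed points of the special fibre, where the henselian-local results of the paper apply. Observe that $K$, every $K_{(\eta)}$, and every residue field $\kappa(x)$ attached to a horizontal point $x\in\mathfrak{X}^{(1)}\setminus Y^{(0)}$ is of characteristic $0$. Hence the quasi-isomorphism $\mathbb{Z}/p^{r}(j)_{\et}\simeq\mu_{p^{r}}^{\otimes j}[0]$ together with Proposition \ref{CaseGBEt} let me rewrite the source as $\HO^{n}_{\et}(K,\mathbb{Z}/p^{r}(n-1))=\HB^{n}(K)_{p^{r}}$, each horizontal target as $\HO^{n-1}_{\et}(\kappa(x),\mathbb{Z}/p^{r}(n-2))=\HB^{n-1}(\kappa(x))_{p^{r}}$, and each vertical target as $\HO^{n}_{\et}(K_{(\eta)},\mathbb{Z}/p^{r}(n-1))$. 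In this dictionary the horizontal components of the displayed map are exactly the residue homomorphisms of the Gersten sequence (\ref{mixGerF}) of Theorem \ref{GBmixex}, while the vertical components are the finer restriction maps to the henselizations $K_{(\eta)}$.

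Let $\xi$ belong to the kernel. The key reduction is to place $\xi$ in $\HB^{n}(\mathfrak{X})_{p^{r}}=\HO^{n}_{\et}(\mathfrak{X},\mathbb{Z}/p^{r}(n-1))$. Its residue at each vertical $\eta$ vanishes because $\xi|_{K_{(\eta)}}=0$ factors that residue, and its residues at the horizontal points vanish by hypothesis; granting that these conditions force $\xi$ into the primed subgroup $\HB^{n}(K)^{\prime}$ with trivial image in $\bigoplus_{x\in\mathfrak{X}^{(1)}}\HB^{n-1}(\kappa(x))$, the exactness of (\ref{mixGerF}) yields the reduction. Now properness enters: since $A$ is henselian and $\mathfrak{X}\to\spec A$ is proper, every closed point $z$ lies on $Y$, and $R_{z}:=\mathcal{O}^{h}_{\mathfrak{X},z}$ is a two-dimensional henselian regular local ring of a smooth $A$-algebra. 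By the injectivity behind Theorem \ref{mixinj}, Corollary \ref{intmax}, and the limit formula of Corollary \ref{HenGer}, $\HB^{n}(\mathfrak{X})_{p^{r}}$ embeds into $\HB^{n}(R_{z})_{p^{r}}\subset\HO^{n}_{\et}(k(R_{z}),\mu_{p^{r}}^{\otimes(n-1)})$, so it suffices to prove that the restriction $\xi_{z}$ of $\xi$ to $k(R_{z})$ vanishes for a single $z$.

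For this I would apply Proposition \ref{RHGL} to $R_{z}$ with its index shifted by one: $\xi_{z}$ is detected by its image in the vertical henselization $k(\widetilde{(R_{z})_{(\pi)}})$ together with its classes in the supported groups $\HO^{n+1}_{\mathfrak{p}}$ at the horizontal primes $\mathfrak{p}$ of $R_{z}$. The vertical image is the restriction of $\xi|_{K_{(\eta)}}=0$ and hence vanishes; the supported classes, which the purity quasi-isomorphism (\ref{quasiil}) of Proposition \ref{tru2} identifies with residues into $\HO^{n-1}(\kappa(\mathfrak{p}),\mathbb{Z}/p^{r}(n-2))$, are controlled by the global residues $\partial_{x}\xi=0$ along the horizontal points $x$ passing through $z$. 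Passing from one global residue to the several local branches of such an $x$ at $z$ is exactly where the weak approximation Lemma \ref{WA} is needed, through the surjectivity of $\KM_{\ast}(\kappa(x))/p^{r}\to\bigoplus_{i}\KM_{\ast}(\kappa(x)_{i})/p^{r}$ onto the henselizations $\kappa(x)_{i}$ at those branches. Thus $\xi_{z}=0$, and therefore $\xi=0$.

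The step I expect to be the main obstacle is the claim in the second paragraph that vanishing of the honest residues $\partial_{x}\xi$ at the horizontal points, together with $\xi|_{K_{(\eta)}}=0$, really places $\xi$ in $\HB^{n}(K)^{\prime}$ and hence in $\HB^{n}(\mathfrak{X})_{p^{r}}$. The residue of Theorem \ref{GBmixex} is defined only on the primed subgroup, whereas a priori $\partial_{x}$ is the localization boundary defined on all of $\HO^{n}_{\et}(K,\mu_{p^{r}}^{\otimes(n-1)})$; one must therefore upgrade ``residue at $x$ is zero'' to ``$\xi$ is trivial in the strict henselization at $x$''. This unramifiedness statement is itself supplied by Proposition \ref{RHGL} and Lemma \ref{WA}, and its verification at the points of $Y$ where several horizontal divisors meet — where the explicit structure of $\operatorname{M}^{n}_{1}$ in Theorem \ref{BKmix} and the rigidity Theorem \ref{henmix} are used — is the delicate core of the argument; once it is in place, the detection at closed points above is formal.
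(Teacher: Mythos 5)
Your first reduction (placing a class $\xi$ in the kernel into $\HB^{n}(\mathfrak{X})_{p^{r}}$ via Theorem \ref{GBmixex}) is fine — indeed the worry you flag about the primed subgroup is not the real obstacle, since at a horizontal $x$ the residue field $\kappa(x)$ has characteristic $0$, so classical absolute purity for the strictly henselian discrete valuation ring $\mathcal{O}_{\mathfrak{X},\bar{x}}$ shows that $\partial_{x}\xi=0$ already forces $\xi$ to die in $k(\mathcal{O}_{\mathfrak{X},\bar{x}})$, while at vertical $\eta$ the hypothesis $\xi|_{K_{(\eta)}}=0$ is stronger than what the primed condition asks. The genuine gap is the next step: the claim that $\HB^{n}(\mathfrak{X})_{p^{r}}$ embeds into $\HB^{n}(R_{z})_{p^{r}}$ for a \emph{single} closed point $z$, so that $\xi_{z}=0$ implies $\xi=0$. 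None of the results you cite gives this. Theorem \ref{mixinj} and Corollary \ref{intmax} only realize $\HB^{n}(\mathfrak{X})$ as $\bigcap_{x}\HB^{n}(\mathcal{O}_{\mathfrak{X},x})$ \emph{inside} $\HB^{n}(K)$, and Corollary \ref{HenGer} is a direct-limit formula for the henselian ring itself; neither controls the kernel of the restriction $\HB^{n}(\mathcal{O}_{\mathfrak{X},z})\to\HB^{n}(R_{z})$, which can be nonzero because a class may be split by an \'etale neighbourhood of $z$. Concretely, by rigidity $\HB^{2}(R_{z})\simeq\operatorname{Br}(\kappa(z))$, so for $A$ with non-separably-closed residue field $k$ a nonzero constant class $\beta\in\operatorname{Br}(k)[p]$ pulled back to $\mathfrak{X}$ is a nonzero element of $\HB^{2}(\mathfrak{X})$ that dies in $\HB^{2}(R_{z})$ whenever $\kappa(z)$ splits $\beta$. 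Such a $\beta$ is of course not in the kernel of the map of the theorem (it survives in $K_{(\eta)}$), but that is exactly the point: the hypothesis at \emph{all} $\eta\in Y^{(0)}$ must be used globally, not merely fed into the local statement at one $z$. The detection has to happen at the codimension-$0$ points of the closed fibre $Y$: from $\xi|_{K_{(\eta)}}=0$ one gets $\xi|_{\kappa(\eta)}=0$ via the injectivity in Proposition \ref{supp} and the rigidity $\HB^{n}(\mathcal{O}^{h}_{\mathfrak{X},\eta})\simeq\HB^{n}(\kappa(\eta))$, and one then needs a global comparison of $\HB^{n}(\mathfrak{X})$ with $\HB^{n}(Y)$ (proper base change over the henselian base) together with the equi-characteristic injectivity $\HB^{n}(Y)\hookrightarrow\bigoplus_{\eta}\HB^{n}(\kappa(\eta))$ of Proposition \ref{equiGer}. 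Your argument never assembles the pointwise information at the $z$'s into a statement about $\mathfrak{X}$.

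A secondary problem is your use of Lemma \ref{WA}. Passing from the vanishing of the \emph{global} residue $\partial_{x}\xi=0$ to the vanishing of its images at the several branches $\mathfrak{p}$ of $x$ through $z$ is automatic (restriction of $0$ is $0$) and requires no surjectivity; the weak approximation lemma is needed in the opposite direction, to produce global symbols (or global classes) matching prescribed local data at finitely many places, which is how it functions in the argument of Hu that the paper imports. As written, the lemma does no work in your proof, which is a sign that the step where it is genuinely required — the globalization — is the step that is missing.
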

\begin{proof}\upshape
The proof of the statement is same as
\cite[Theorem 2.5]{Hu}.
The statement follows from 
Proposition \ref{RHGL} and Lemma \ref{WA}.
\end{proof}

%



%

\end{document}